\newcommand{\Int}{{\rm Int}}
\newcommand{\Ext}{{\rm Ext}}
\newcolumntype{C}[1]{>{\centering\let\newline\\\arraybackslash\hspace{0pt}}m{#1}}
\newcommand{\F}{\mathbb{F}}
\newcommand{\x}{\textbf{x}}
\newtheorem{theorem}{Theorem}[section]
\newtheorem{conj}{Conjecture}[section]
\newtheorem{lm}[theorem]{Lemma}
\title{Planar Graphs with Ore-degree at Most seven is strongly $13$-edge-colorable}
\author{Seth Nelson \and Gexin Yu}
\address{
Department of Mathematics, William \& Mary, Williamsburg, VA, 23185, USA.
}
\email{gyu@wm.edu}
\date{\today}
\begin{document}

\begin{abstract}
A strong edge-coloring of a graph $G$ is a coloring of edges of $G$ such that every color class forms an induced matching. The strong chromatic index is the minimum number of colors needed to color the graph. The Ore-degree $\theta(G)$ of a graph $G$ is the maximum sum of degrees of adjacent vertices. We show that every planar graph $G$ with $\theta(G)\le 7$ has strong chromatic index at most $13$. This settles a conjecture of Chen et al in the planar case. We use a discharging method, and apply Combinatorial Nullstellensatz to show reducible configurations. We provide an algorithm to allow Combinatorial Nullstellansatz  extracting coefficients from large polynomials.
\end{abstract}

\maketitle

\section{Introduction}

A strong edge-coloring of a graph is a proper edge-coloring in which any path or cycle of length three is assigned three distinct colors; that is, every color class is an induced matching. The {\em strong chromatic index} of a graph $G$ is the minimum number of colors required in a strong edge-coloring of $G$, and we denote this number as $\chi_s'(G)$. Strong edge-coloring was first introduced by Fouquet and Jolivet \cite{FJ83}. Erd\H{o}s and Ne\u{s}et\u{r}il \cite{EN86} made the following well-known conjecture.

\begin{conj}(Erd\H{o}s and Ne\u{s}et\u{r}il, 1986)
For a graph $G$ with maximum degree $\Delta$,
\begin{equation*}
\chi_s'(G) \le \begin{cases}
 \frac54 \Delta^2, & \text{if $\Delta$ is even}\\
\frac14 (5\Delta^2 - 2\Delta + 1), & \text{if $\Delta$ is odd}
\end{cases}
\end{equation*}
\end{conj}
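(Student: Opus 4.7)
The Erd\H{o}s--Ne\u{s}et\u{r}il conjecture has been open since 1986, and no full proof is known, so any honest plan must acknowledge that only partial progress toward the exact bound is presently achievable. The natural starting point is the greedy bound $\chi_s'(G)\le 2\Delta(\Delta-1)+1$: each edge of $G$ has at most $2(\Delta-1)$ edges adjacent to it and at most $2(\Delta-1)^2$ edges at graph distance two from it, so the square of the line graph $L(G)^2$ has maximum degree at most $2\Delta(\Delta-1)$, and a greedy coloring suffices. The target $\tfrac{5}{4}\Delta^2$ is roughly $\tfrac58$ of this trivial bound, so one must systematically exploit structure that a greedy argument ignores.

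The plan I would pursue is the probabilistic/semi-random method in the Molloy--Reed tradition. View a strong edge-coloring of $G$ as a proper vertex-coloring of $L(G)^2$, and try to show that this auxiliary graph is ``locally sparse'' enough for Talagrand's inequality to yield a substantial saving. Concretely, one iteratively colors a random $\Theta(1/\Delta)$-fraction of the edges, uses concentration to show that each uncolored edge retains, in expectation, $(1-o(1))$ of its list of available colors, and then applies a Haxell-type deficit argument to finish the leftover sparse instance. This recovers bounds of the form $(2-\varepsilon)\Delta^2$, as originally obtained by Bruhn--Joos and subsequently improved by Hurley, de Joannis de Verclos, and Kang to roughly $1.772\Delta^2$. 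To shave the constant further, one would quantify correlations among color losses at geometrically close edges and combine this with stronger entropy-compression or nibble iterations.

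The main obstacle, and the reason the conjecture is still out of reach, is that the extremal construction $C_5[\overline{K}_{\Delta/2}]$---the $5$-cycle with each vertex replaced by an independent set of size $\Delta/2$---already achieves $\chi_s'=\tfrac{5}{4}\Delta^2$, so any proof must be tight on graphs that are locally dense with induced $5$-cycles. Neither a purely degree-based probabilistic argument (which inevitably loses a constant factor absorbing non-$C_5$ local structure) nor a purely structural/discharging argument (which requires global hypotheses like planarity and bounded Ore-degree, as in this paper's main theorem) suffices by itself. The realistic deliverable of the plan above is therefore a new asymptotic bound rather than the exact conjecture; closing the constant all the way to $\tfrac54$ would require a genuinely new idea combining a sharp probabilistic analysis with a combinatorial classification of near-extremal local $C_5$ patterns around each edge, and I would expect this to be the step no existing technique can currently carry out.
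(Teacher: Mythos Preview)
Your assessment is correct: the statement is a \emph{conjecture}, not a theorem, and the paper does not prove it either. It is presented purely as background, with the paper noting the same partial results you cite (the $\Delta=3$ case of Andersen and Hor\'ak--He--Trotter, the $\Delta=4$ bound of $21$, and the Hurley--de Verclos--Kang asymptotic bound of $1.772\Delta^2$) before moving on to its actual target, a completely different statement about planar graphs with Ore-degree at most~$7$. There is no proof in the paper to compare your proposal against; your recognition that the conjecture remains open and your survey of the probabilistic approach and its limitations are accurate and appropriate.
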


This conjecture was confirmed when $\Delta = 3$ by Andersen \cite{A92} and independently by Hor\'ak, He, and Trotter \cite{HHT93}. For $\Delta=4$, Huang, Santana and G. Yu~\cite{HSY18}  showed that $\chi_s'(G)\le 21$, one more than the conjectured bound $20$. For large $\Delta$, the current best upper bound, provided by Hurley, Verclos, and Kang \cite{HVK22} using probabilistic techniques, is $\chi_s'(G) \le 1.772\Delta^2$.

If $G$ is a planar graph, then one may show that $\chi_s'(G) \le 4\Delta + 4$. Additionally, Faudree, Schelp, Gy\'arf\'as and Tuza~\cite{FSGT90} showed that for any $\Delta\ge 2$, there exists a planar graph $G$ of maximum degree $\Delta$ so that $\chi_s'(G) \ge 4\Delta - 4$.   Consequently, if $G$ is a planar graph with maximum $4$, $12\le \chi_s'(G)\le 20$. Yang, Shiu, Wang and Chen improved it to $\chi_s'(G)\le 19$, which was extended to a list version by Chen, Hu, X. Yu and Zhou~\cite{CHYZ19}.  

In this paper, we consider strong edge-colorings subject to the Ore-degree of a graph: $$\theta(G) = \max\{d(u) + d(v) : uv\in E(G)\}.$$  Note that $\Delta(G)+\delta(G)\le \theta(G)\le 2\Delta(G)$.  In the case when $\theta(G) = 6$, Nakprasit and Nakprasit \cite{NN15} have proved that $\chi_s'(G) \le 10$.  Chen, Huang, G. Yu and Zhou \cite{CHYZ20} showed that $\chi_s'(G) \le 15$ if  $\theta(G)\le 7$, and
made the following conjecture (the bound $13$ is optimal, see Figure~\ref{fig1}):

\begin{conj}\label{chen-conjecture}(Chen, Huang, Yu and Zhou \cite{CHYZ20})
If $G$ is a graph with $\theta(G) = 7$, then $\chi_s'(G) \le 13$.
\end{conj}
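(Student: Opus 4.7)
The plan is to proceed by induction on $|E(G)|$, taking a minimum counterexample $G$ to the conjecture (so $G$ has $\theta(G) = 7$ and $\chi_s'(G) \geq 14$ while every proper subgraph with Ore-degree $\le 7$ has strong chromatic index at most $13$). I first exploit the strong local structural constraints imposed by $\theta(G) \le 7$: namely $\Delta(G) \le 6$, and the weight-balance rules that a vertex of degree $6, 5, 4, 3$ has all neighbors of degree at most $1, 2, 3, 4$ respectively. Consequently the edges come in a very limited number of degree-type classes, and the only potential bottleneck edges for a greedy argument are $(3,4)$- and $(5,2)$-edges, with at most $17$ and $13$ edges at distance $\le 2$ from them respectively. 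The case $\Delta(G) \le 3$ is disposed of by the Andersen / Hor\'ak--He--Trotter bound $\chi_s'(G) \le 10$, and the case $\theta(G) \le 6$ by the Nakprasit--Nakprasit bound $\chi_s'(G) \le 10$, so I can assume $\theta(G) = 7$ and $\Delta(G) \ge 4$.

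The next step is to build a catalogue of reducible configurations, following the template of the paper: small induced substructures $H \subseteq G$ whose removal leaves a graph that can be strong-edge-colored with $13$ colors in a way that extends to $H$. The Combinatorial Nullstellensatz method of the paper, together with the companion algorithm for extracting coefficients from large polynomials, lets me verify each configuration systematically. The list from the planar case remains reducible in the general setting, since reducibility depends only on local structure; I need to enlarge it to cover the denser neighborhoods permitted by nonplanarity, paying particular attention to the degree-sequences around $(3,4)$-edges where the naive count is largest.

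The core difficulty is the global argument: showing that every graph with $\theta(G) = 7$ contains at least one reducible configuration. In the planar case one invokes Euler's formula via the discharging identity $\sum_v (d(v)-4) + \sum_f (|f|-4) = -8$. For general graphs there is no analogue. My plan is to replace it with a purely vertex-based discharging scheme, leveraging the sparsity forced by the Ore-degree condition: summing $d(u)+d(v) \le 7$ over edges gives $\sum_v d(v)^2 \le 7|E(G)|$, so the degree-weighted average degree is at most $3.5$ and the graph must contain many vertices of degree $\le 3$. I assign each vertex of degree $i$ an initial charge $i - \tfrac{7}{2}$ (so the total charge is nonpositive), and design rules that move charge from low-degree vertices to their high-degree neighbors. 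The absence of each configuration in the catalogue should force every vertex to end with strictly positive charge, contradicting nonpositivity of the total.

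The main obstacle I expect, and the step where genuinely new ideas beyond the planar result will be needed, is making this vertex-only discharging go through: the planar argument enjoys discharging across faces, which in effect "averages" charge over cycles, while a vertex-only scheme can leak charge indefinitely along long paths of degree-$3$ vertices. I anticipate having to either (i) prove auxiliary structural lemmas asserting that long chains of degree-$3$ (or $(3,4)$-alternating) vertices themselves contain reducible subconfigurations, or (ii) set up the charges with fractional weights tuned to the eigenvector of the "allowed adjacency" matrix on degree classes, so that the charge motion respects the Ore-degree constraint. Once the discharging closes, the proof is complete; until then, this is where the bulk of the work lies.
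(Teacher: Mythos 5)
There is a genuine gap here, and in fact two distinct problems. First, the statement you are proving is stated in the paper only as a conjecture: the paper itself proves just the planar case (Theorem~\ref{main-theorem}), and your text is a research plan for the general case rather than a proof of anything. You explicitly leave the decisive step open — the unavoidability argument showing that every graph with $\theta(G)=7$ contains a configuration from your catalogue. Your own sparsity computation only yields that the total charge $\sum_v\bigl(d(v)-\tfrac{7}{2}\bigr)$ is nonpositive; you give no discharging rules, no final-charge analysis, and you candidly admit that long chains of degree-$3$ (or $(3,4)$-alternating) vertices can defeat any vertex-only scheme. Until that step is carried out, nothing is proved; a proposal that ends with ``this is where the bulk of the work lies'' is not a proof of the conjecture.

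Second, the claim that ``the list from the planar case remains reducible in the general setting, since reducibility depends only on local structure'' is concretely false for the paper's configurations. The reducibility proofs of Lemmas~\ref{4-cycle-two-4-vertices}, \ref{no-adjacent-4-faces}, \ref{no-2-vertex-5-face}, \ref{5-face-two-4-vertices}, \ref{no-4-vertex-in-two-4-faces-one-5-face}, and \ref{no-4-vertex-in-three-5-face-4-face} all lean on the separating-cycle lemmas (\ref{lem:no-sep-4-cycle}, \ref{lem:no-sep-5-cycle}, \ref{lem:no-sep-6-cycle}), whose proofs are intrinsically planar: they fix a plane embedding, split $G$ into $\Int(C)$ and $\Ext(C)$, color the two sides by minimality, and glue along $C$ by permuting colors. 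In a nonplanar graph there is no such interior/exterior decomposition, the outer vertices $u_i$ in the Nullstellensatz configurations may coincide or be adjacent in ways the paper rules out, and the polynomials $f(\x)$ and their degree vectors would have to be recomputed over a much larger (and possibly unbounded) case analysis — the crucial non-adjacency facts are global consequences of planarity, not local ones. So even the reducibility half of your plan does not transfer as stated, independently of the missing discharging argument. In short, the paper's argument settles only the planar case, and your proposal neither reproduces that argument nor supplies the new ingredients the general conjecture would require.
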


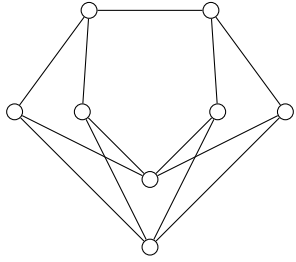
\begin{figure}[h]\label{fig1}
\begin{tikzpicture}[scale=0.9, every node/.style={font=\small}]

\tikzstyle{vertex}=[circle, draw, inner sep=1pt, minimum size=6pt]

\node[vertex] (v1) at (3,1) {};
\node[vertex] (v2) at (3,2) {};
\node[vertex] (v3) at (1,3) {};
\node[vertex] (v4) at (2,3) {};
\node[vertex] (v5) at (4,3) {};
\node[vertex] (v6) at (5,3) {};
\node[vertex] (v7) at (2.1,4.5) {};
\node[vertex] (v8) at (3.9,4.5) {};

\draw (v1) -- (v3);
\draw (v1) -- (v4);
\draw (v1) -- (v5);
\draw (v1) -- (v6);
\draw (v2) -- (v3);
\draw (v2) -- (v4);
\draw (v2) -- (v5);
\draw (v2) -- (v6);
\draw (v7) -- (v3);
\draw (v7) -- (v4);
\draw (v8) -- (v5);
\draw (v8) -- (v6);
\draw (v7) -- (v8);
\end{tikzpicture}
\caption{A graph with $\theta(G)=7$ and $\chi_s'(G)=13$}
\end{figure}

Huang, Liu, and Zhou \cite{HLZ23} made partial progress toward this conjecture by proving that any planar graph $G$ with $\theta(G)\le 7$ and $\Delta(G)\le 4$ is strong list edge-colorable in $14$ colors. This is a slightly stronger result than strong edge-coloring, which implies that $\chi_s'(G) \le 14$ for this class of graphs. Very recently,  Wang \cite{W25} showed the conjecture holds for graphs with $\theta(G)\le 7$ and maximum average degree less than $\frac{28}9$.

The main purpose of this article is to prove the following theorem, which gives a positive answer to Conjecture \ref{chen-conjecture} for planar graphs.

\begin{theorem}\label{main-theorem}
Let $G$ be a planar graph. If $\theta(G)\le 7$, then $\chi_s'(G) \le 13$.
\end{theorem}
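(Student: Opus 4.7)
The plan is to argue by contradiction via a minimum counterexample combined with the discharging method. Let $G$ be a counterexample minimizing $|V(G)| + |E(G)|$; so $G$ is planar, $\theta(G) \le 7$, and $\chi_s'(G) \ge 14$. The Ore-degree bound forces $\Delta(G) \le 6$ (a degree-$7$ vertex would require a degree-$0$ neighbor), and more strongly any degree-$6$ vertex has only pendant neighbors, any degree-$5$ vertex has only neighbors of degree $\le 2$, and so on. Minimality also eliminates trivial local reductions (e.g.\ pendant edges whose $13$-palette is never exhausted), and the previously known bound $\chi_s'(G)\le 15$ for $\theta(G)\le 7$ from \cite{CHYZ20} suggests which local structures were left open.

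Next, I would build a library of \emph{reducible configurations}: small subgraphs $H \subset G$ together with a distinguished edge set $S \subset E(H)$ such that, by minimality, $G - S$ admits a strong edge-coloring with $13$ colors, and this coloring can always be extended to $S$. Concretely, for each $e \in S$ I collect the set $L(e)$ of colors not forbidden by already-coloured edges within distance $1$, and I must certify that the system admits a proper system of distinct representatives compatible with the induced-matching condition. To do this, I would apply the Combinatorial Nullstellensatz: encode the strong-coloring constraints on $S$ as a polynomial $P_S(x_e : e \in S)$ that vanishes precisely on forbidden color assignments, and show that an appropriate monomial $\prod_{e}x_e^{|L(e)|-1}$ has nonzero coefficient in $P_S$. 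Since the palette of $13$ colors is shared, nonvanishing of this coefficient forces the desired extension; the paper's algorithmic coefficient extractor is the tool that makes this feasible when $|S|$ is moderately large, because the edges of $S$ typically interact with $8$--$12$ other edges each and the polynomials are high degree in many variables.

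Once a rich enough list of reducible configurations is established, I would execute a discharging argument. Assign initial charge $\mu(v) = d(v) - 4$ to each vertex and $\mu(f) = |f| - 4$ to each face; by Euler's formula, the total charge equals $-8$. The only sources of negative charge are $2$- and $3$-vertices and triangular faces, while $5$- and $6$-vertices and faces of length $\ge 5$ carry positive surplus. I would craft rules sending charge from high-degree vertices to their low-degree neighbors, and from large faces to their incident small-degree vertices and adjacent small faces, mimicking the scheme used in \cite{CHYZ20} but tightened by the new reducible configurations. Verifying $\mu^*(x) \ge 0$ for every vertex and face then yields $0 \le \sum \mu^*(x) = -8$, the desired contradiction.

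The main obstacle is coordinating the two halves. Because the tight example in Figure~\ref{fig1} shows $13$ is optimal, the colors-to-constraints gap in any candidate reducible configuration is extremely thin, so the Nullstellensatz monomials will frequently lie near the boundary of nonvanishing and require the explicit algorithmic check rather than a slick closed-form argument. At the same time, the discharging must be strong enough that every local configuration surviving the reducibility analysis can actually absorb charge, which typically forces the list of forbidden configurations to grow until the Nullstellensatz polynomials become genuinely large; finding a set of configurations that is simultaneously reducible and sufficient for discharging is the delicate interplay where I expect the real work to lie.
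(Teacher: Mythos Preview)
Your high-level plan---minimal counterexample, reducible configurations certified via Combinatorial Nullstellensatz, then discharging---is exactly the paper's strategy. But two concrete pieces of the paper's execution are absent from your outline, and without them the plan would not close.

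First, your discharging scheme $\mu(v)=d(v)-4$, $\mu(f)=|f|-4$ puts every $3$-vertex at charge $-1$. The paper instead uses $\mu(v)=2d(v)-6$, $\mu(f)=d(f)-6$, after first proving (Lemma~\ref{no-1-vertex}) that a minimal counterexample has $2\le d(v)\le 4$; under this scheme $3$-vertices are neutral, and only $4$-vertices need to donate. Since the Ore-degree condition forces $3$-vertices to be ubiquitous (every $4$-vertex is surrounded by $3^-$-vertices), your scheme would require a much more elaborate set of rules and many more reducible configurations to feed all of them. The paper's choice is not cosmetic: it localizes all the work to the $4$-vertices and the $4$- and $5$-faces.

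Second, and more seriously, you do not mention the separating-cycle lemmas (Lemmas~\ref{lem:no-adjacent-vertex-cut}--\ref{lem:no-sep-6-cycle}), which the paper itself flags as ``extremely important'' for the Nullstellensatz step. The issue is that when you delete a configuration $H$ and examine the polynomial $P_S$, the structure of $P_S$ depends on which pairs of boundary vertices $u_i,u_j$ coincide or are adjacent in $G\setminus H$---each such coincidence adds or removes a factor $(x_s-x_t)$. Without ruling out short separating cycles, you face a combinatorial explosion of cases (and different target monomials) for every configuration. The paper spends the bulk of its technical effort proving that no separating $4$-, $5$-, or $6$-cycle exists, via intricate cut-and-paste colorings of the interior and exterior; only then does a single polynomial per configuration suffice. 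Your outline treats the Nullstellensatz certification as a black box fed by the coefficient-extraction algorithm, but the algorithm alone does not solve the case-explosion problem---the separating-cycle lemmas do.
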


We use a  discharging argument in our proof. To show the reducible configurations, we use the following Hall's Theorem.  

\begin{theorem}[Hall's Theorem, \cite{H35}]
Let $U$ be a set with subsets $A_1, \dots, A_n$. We may select a system of distinct representative for $A_1, \dots, A_n$ if and only if for all $k \in \{1, \dots, n\}$ and any selection of $k$ distinct sets $A_{b_1}, A_{b_2}, \dots, A_{b_k}$ for $b_1, \dots, b_k \in \{1, \dots, n\}$,
\begin{equation*}
\Big| \bigcup_{i \in [k]} A_{b_i} \Big| \ge k.
\end{equation*}
\end{theorem}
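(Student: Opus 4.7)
The plan is to prove the two directions separately. Necessity is immediate: if $r_1,\dots,r_n$ is a system of distinct representatives, then for any indices $b_1,\dots,b_k$ the $k$ distinct elements $r_{b_1},\dots,r_{b_k}$ all lie in $\bigcup_{i\in[k]} A_{b_i}$, so that union has at least $k$ elements. The substantive direction is the converse, which I would prove by induction on $n$. The base case $n=1$ is trivial: the hypothesis forces $A_1\ne\emptyset$, and any element of $A_1$ serves as its representative.

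For the inductive step, I would use the classical dichotomy. In the \emph{slack} case, assume every selection of $k<n$ sets satisfies the strict inequality $\bigl|\bigcup_{i\in[k]} A_{b_i}\bigr|\ge k+1$. Pick any $x\in A_1$ to represent $A_1$, and form the reduced family $A_2\setminus\{x\},\dots,A_n\setminus\{x\}$ on $n-1$ indices. Any $k$ of these still have union of size at least $(k+1)-1=k$, so Hall's condition is preserved; the inductive hypothesis yields an SDR for the reduced family, and appending $x$ completes an SDR for the original.

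In the \emph{tight} case, some subfamily of size $k$ with $1\le k\le n-1$ has union of size exactly $k$. Apply the inductive hypothesis to this tight subfamily to obtain an SDR $R$ of size $k$ exhausting the tight union. For each remaining set $A_j$, define $A_j':=A_j\setminus R$. Given any $j$ of these, their union together with the tight subfamily has size at least $k+j$ by hypothesis, and since $R$ lies entirely in the tight union, deleting $R$ from the combined union removes exactly $k$ elements, leaving at least $j$. Thus $\{A_j'\}$ satisfies Hall's condition on $n-k$ indices, the inductive hypothesis produces an SDR disjoint from $R$, and the two combine to give the desired SDR.

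The main obstacle is the dichotomy itself and the bookkeeping in the tight case: checking that Hall's condition survives the deletion of $R$ from the remaining sets is the one step that requires care, whereas the slack case is essentially automatic once the strict inequality is assumed. With these reductions in place, both branches close cleanly into smaller instances of the same problem, and the induction terminates at $n=1$.
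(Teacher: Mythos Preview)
Your proof is correct and is the standard inductive argument for Hall's Theorem, with the classical slack/tight dichotomy handled cleanly. However, the paper does not prove this statement at all: Hall's Theorem is stated as a cited classical result (with reference to Hall's 1935 paper) and is used purely as a tool throughout the arguments, with no proof supplied. So there is no proof in the paper to compare against; your argument stands on its own as a valid proof of the cited theorem.
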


We will also use Combinatorial Nullstellensatz.

\begin{theorem}[Alon \cite{A99}, Combinatorial Nullstellensatz] \label{nullstellens}
Let $\F$ be an arbitrary field. Let $f(x_1, \dots, x_n)$ be a polynomial in $\F[x_1, \dots, x_n]$. Suppose that the degree $\deg f$ of $f$ equals $\sum_{i=1}^n k_i$ for nonnegative integers $k_i$, and the coefficient $x_1^{k_1}\dots x_n^{k_n}$ is nonzero. Then if $S_1, \dots, S_n$ are subsets of $\F$ with $|S_i| > k_i$, there exists $s_1 \in S_1, \dots, s_2 \in S_2$ so that $f(s_1, \dots, s_n) \neq 0$.
\end{theorem}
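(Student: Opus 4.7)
The plan is to follow Alon's original two-step argument: first establish a polynomial division lemma, and then deduce the nonvanishing statement by tracking top-degree monomials. Suppose, for contradiction, that $f(s_1,\ldots,s_n)=0$ for every $(s_1,\ldots,s_n)\in S_1\times\cdots\times S_n$. For each $i$, define
\[
g_i(x_i) \;=\; \prod_{s\in S_i}(x_i - s),
\]
a monic univariate polynomial of degree $|S_i|$. The auxiliary claim I would prove first is that there exist polynomials $h_1,\ldots,h_n\in\F[x_1,\ldots,x_n]$ with $\deg(h_i g_i)\le \deg f$ for each $i$, such that
\[
f \;=\; \sum_{i=1}^n h_i\, g_i.
\]

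To establish the auxiliary claim, the idea is to reduce $f$ modulo the ideal generated by $g_1,\ldots,g_n$. Using $x_i^{|S_i|} = x_i^{|S_i|} - g_i(x_i) + g_i(x_i)$, each occurrence of $x_i^{|S_i|}$ (or any higher power) in a monomial of $f$ can be replaced by a polynomial in $x_i$ of strictly smaller $x_i$-degree, at the cost of subtracting a multiple of $g_i$. Iterating this substitution across all variables terminates in finitely many steps and produces a remainder $\tilde f$ with $x_i$-degree at most $|S_i|-1$ for every $i$, together with an expression $f-\tilde f=\sum_i h_i g_i$ in which the bound $\deg(h_i g_i)\le \deg f$ is preserved at every step, since each substitution replaces a term by one of no larger total degree. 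Because each $g_i$ vanishes on $S_1\times\cdots\times S_n$, so does $\tilde f$. An induction on $n$, with the base case being the fact that a nonzero univariate polynomial of degree less than $|S_i|$ cannot vanish on a set of size $|S_i|$, then forces $\tilde f\equiv 0$, which gives the desired decomposition.

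With $f=\sum_i h_i g_i$ in hand, I would examine the coefficient of $x_1^{k_1}\cdots x_n^{k_n}$ on both sides. By hypothesis, this monomial has total degree $\deg f$ and a nonzero coefficient in $f$. Any monomial of total degree $\deg f$ appearing in the expansion of $h_i g_i$ must come from multiplying a top-degree monomial of $h_i$ by the leading term $x_i^{|S_i|}$ of $g_i$, since any lower-order term of $g_i$ would yield a monomial of total degree strictly less than $\deg(h_i g_i)\le \deg f$. Consequently every top-degree monomial produced by $h_i g_i$ contains $x_i$ to a power of at least $|S_i|>k_i$, and therefore none of them can equal $x_1^{k_1}\cdots x_n^{k_n}$. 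Summing over $i$, the coefficient of $x_1^{k_1}\cdots x_n^{k_n}$ on the right-hand side is zero, contradicting the hypothesis on $f$. The step I expect to require the most care is the division lemma itself: namely, maintaining the inequality $\deg(h_i g_i)\le\deg f$ through the iterated substitutions and justifying the grid-vanishing induction cleanly, so that the clean polynomial identity $f=\sum_i h_i g_i$ is available for the final coefficient comparison.
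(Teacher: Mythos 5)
The paper does not prove this statement at all---it is quoted as Alon's Combinatorial Nullstellensatz and used as a black box, so the only proof to compare against is Alon's original one. Your proposal is correct and is essentially that original argument (division with remainder against the grid polynomials $g_i(x_i)=\prod_{s\in S_i}(x_i-s)$ with the degree bound $\deg(h_ig_i)\le\deg f$, the grid-vanishing induction to kill the remainder, then comparison of the coefficient of $x_1^{k_1}\cdots x_n^{k_n}$, noting every top-degree monomial of $h_ig_i$ is divisible by $x_i^{|S_i|}$ with $|S_i|>k_i$), so it matches the cited source's approach.
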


% We also supply an algorithm to extract the coefficients on terms from polynomial products with more than 90 terms. Indeed, to prove Theorem \ref{main-theorem}, we will need to reduce rather large configurations using Combinatorial Nullstellensatz. The polynomials corresponding to these configurations are defined as large products of terms, which, after expansion, can contain several billion monomials. Such computations are so large that they are typically beyond the reach of an average computer. To remedy this fact, we provide an algorithm that reduces the number of computations and the amount of memory required to calculate the polynomial expansion.

It is often difficult to apply the Combinatorial Nullstellensatz to large configurations, since extracting the relevant coefficients requires expanding high-degree polynomials. For products involving dozens of terms, the full expansion can already become unmanageable, and for configurations of the size considered in this paper, the resulting polynomial may contain billions of monomials. Such computations quickly exceed both the memory and runtime capacities of a typical computer.  To overcome this barrier, we develop an algorithm that efficiently extracts coefficients from polynomial products with more than ninety factors. The algorithm reduces both the number of operations and the memory required, allowing us to carry out expansions that would otherwise be computationally infeasible. This tool is essential in the proof of Theorem~\ref{main-theorem}, where handling extremely large polynomial configurations is unavoidable. We will discuss this algorithm in the next section. 

\section{Notation and a polynomial reduction algorithm}

\subsection{Notation}

For our purposes, $G$ will always be a simple graph, and $V(G)$, $E(G)$ will denote the vertex sets and edge sets of $G$ respectively. For two edges $e, f$, we say {\em $e$ sees $f$} if $e$ and $f$ are contained in a path or cycle of length at most three. We say a vertex $v$ is an $d$-vertex if $v$ has $d$-neighbors. We also say $v$ is an $d^+$-vertex or $d^-$-vertex if $v$ has at least $d$ neighbors or at most $d$ neighbors respectively. Likewise, for $k$-faces and $k$-cycles. %if $F$ is a face in $G$, then $F$ is a $k$-face if $F$ contains $k$ vertices, and likewise if $C$ is a $k$-cycle. 
For a coloring $\phi$ of $G$, and a vertex $v$ in $G$, $c_\phi(v)$ denotes the set of colors on the edges incident to $v$, and if $e \in E(G)$, then $c_\phi(e)$ denotes the color on $e$. If $\phi$ is understood, or if the proof works just as well on multiple different $\phi$, then we often just write $c(v)$ or $c(e)$. If $H$ is a subgraph of $G$, $e$ an edge in $H$, and $\phi$ a coloring of $G-H$, then $A_\phi(e)$ denotes the set of colors available to $e$ after applying $\phi$ to $G$.

\subsection{Polynomial Reduction Algorithm}

We will often consider the coefficients on the monomials in polynomials with the following form:
    \begin{equation*}
        f(\x) = \frac{\prod_{1 \le i < j \le n}(x_i-x_j)}{h(\x)},
    \end{equation*}
\noindent
where $h(\x)$ is a product of terms $x_s-x_r$ such that each $x_s-x_r$ is a term in $\prod_{1 \le i < j \le n}(x_i-x_j)$.

To apply Theorem \ref{nullstellens}, for some set of integers $d_1, \dots, d_k$, we want to show that there exists a monomial $c\cdot x_1^{k_1}\dots x_n^{k_n}$ in $f(\x)$ so that $c\not=0$ and $k_i < d_i$ for each $d_i$. By definition, $f(\x) = (x_{s_1} - x_{s_2})\dots (x_{s_k}-x_{s_\ell})$ for some set of integers $s_1, s_2, \dots, s_k, s_\ell$. To compute the product $f(\x)$ efficiently, we specify a greedy ordering on the terms of $f(\x)$, in which we select the $n$-th term $x_{s_i}-x_{s_j}$ if it is minimal in the sense that if $\ell_{s_i}$ and $\ell_{s_j}$ are respectively the numbers of appearances of $x_{s_i}$ and $x_{s_j}$ in the $n$ previous terms, then one of $d_{s_i}-\ell_{s_i}$ or $d_{s_j}-\ell_{s_j}$ is the smallest possible integer out of all other unselected terms.

Now, suppose that $x_{s_1} - x_{s_2}, \dots, x_{s_k}-x_{s_\ell}$ is a list of the terms after the greedy ordering. We recursively define the algorithm as follows. Set $p(\x) = 1$. On step $N$, pick the first $m$ terms $b_1, \dots, b_m$ in the list $x_{s_1} - x_{s_2}, \dots, x_{s_k}-x_{s_\ell}$ so that there is one $x_{s_i}$ in the $b_i$ which appears $\ell_{s_i} \ge d_{s_i}$ times. Replace $p(\x)$ by $b_1\dots b_m\cdot p(\x)$. Search through $p(\x)$, and set any monomial $c\cdot x_1^{k_1}\dots x_n^{k_n}$ to zero if the exponent $k_{s_i}$ on $x_{s_i}$ is larger than $d_{s_i}$. Delete $b_1, \dots, b_m$ from the list. If the resulting list is empty, terminate, otherwise repeat.

This algorithm will output the set of monomials in $f(\x)$ so that the exponent $k_i$ of $x_i$ in the monomial is strictly smaller than $d_i$. Using these techniques, the memory complexity of the computation is substantially reduced, and we have been able to compute polynomials with $80$ or more terms in the product, depending on how restrictive the set of integers $d_1, \dots, d_k$ is. In order to compute these polynomials, we used the computer algebra system Singular \cite{GPS08}.

\section{Structural lemmas}

From now on, we let $G$ be a plane graph with $\theta(G)\le 7$ and a minimal counterexample to the claim that $\chi_s'(G) \le 13$.

\begin{lm}\label{no-1-vertex}
For $v\in V(G)$, $2\le d(v)\le 4$.
\end{lm}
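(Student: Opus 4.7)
The plan is to handle the two bounds $d(v)\ge 2$ and $d(v)\le 4$ separately, each time by deleting a small configuration around $v$, invoking the minimality of $G$ to obtain a strong $13$-edge-coloring of what remains, and then extending to the deleted edges.

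For the lower bound, the case $d(v)=0$ is trivial: remove $v$ and apply minimality to $G-v$. For $d(v)=1$, let $u$ be the unique neighbor of $v$; by minimality $G-v$ admits a strong $13$-edge-coloring $\phi$. Extending $\phi$ to the edge $uv$ requires a color avoiding the colors used on the $d(u)-1$ edges at $u$ and, for each neighbor $w$ of $u$ with $w\ne v$, on the $d(w)-1$ other edges at $w$. Since $\theta(G)\le 7$ forces $d(w)\le 7-d(u)$, the number of forbidden colors is at most $(d(u)-1)+(d(u)-1)(7-d(u)-1)=(d(u)-1)(7-d(u))$, which is maximized at $d(u)=4$ with value $9$. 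Hence $|A_\phi(uv)|\ge 13-9=4$ and the extension succeeds, contradicting minimality.

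For the upper bound, I would assume for contradiction that $d(v)\ge 5$. Then $\theta(G)\le 7$ forces every neighbor $u_i$ of $v$ to satisfy $d(u_i)\le 2$, and in particular $d(v)\le 6$. Delete all star edges $vu_1,\ldots,vu_{d(v)}$ and apply minimality to obtain a strong $13$-edge-coloring $\phi$ of the resulting graph. For each star edge $vu_i$, the forbidden colors come only from edges incident to $u_i$ and from edges at the possibly existing second neighbor $w_i$ of $u_i$, contributing at most $(d(u_i)-1)+(d(w_i)-1)\le d(u_i)+d(w_i)-2\le 5$ colors, so $|A_\phi(vu_i)|\ge 8$. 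Since the star edges pairwise see each other at $v$, I would conclude by applying Hall's Theorem to the lists $A_\phi(vu_1),\ldots,A_\phi(vu_{d(v)})$: any $k\le d(v)\le 6$ of these lists have union of size at least $8\ge k$, so Hall's condition holds and a system of distinct representatives exists, giving the desired extension.

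The delicate point is the near-tight worst case $d(u)=4$ in the $d(v)=1$ subcase, where the bound $(d(u)-1)(7-d(u))=9$ leaves only $4$ colors to spare, so the counting has to be organized carefully to avoid double-counting edges through $u$; once that bound is established, the $d(v)\ge 5$ argument is essentially automatic, since a high-degree vertex under $\theta(G)\le 7$ forces such a sparse neighborhood that the color lists are large enough for Hall's Theorem to apply with room to spare.
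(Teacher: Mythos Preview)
Your argument for the lower bound $d(v)\ge 2$ is fine and in fact sharper than the paper's (which just observes that $uv$ sees at most $12$ edges).

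The upper bound argument, however, has a genuine gap in the case $d(v)=5$. When you count forbidden colors for the star edge $vu_i$, you write that they ``come only from edges incident to $u_i$ and from edges at the possibly existing second neighbor $w_i$ of $u_i$.'' This omits the edges on the $v$-side: for each $j\ne i$ with $d(u_j)=2$, the edge $u_jw_j$ lies on the path $u_i\,v\,u_j\,w_j$ of length three in $G$, so $vu_i$ sees $u_jw_j$ and its color is forbidden. With $d(v)=5$ this contributes up to four additional forbidden colors, so the correct bound is $|A_\phi(vu_i)|\ge 13-5-4=4$, not $8$. With five pairwise-conflicting edges and lists of size $4$, Hall's condition can fail (all five lists could coincide), so your extension does not go through.

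This is exactly why the paper's proof of the $5$-vertex case is more elaborate: it first disposes of the situation where some $w_i$ has degree at most $4$ (then that particular list has size $\ge 5$ and Hall's Theorem applies), and then treats the hard case where every $w_i$ is itself a $5$-vertex by uncoloring the edges $u_iw_i$ and performing a recoloring argument that forces a repeated color somewhere in the neighborhood. Your approach cannot avoid a comparable case analysis, because the list-size bound alone is simply too weak when $d(v)=5$.
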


\begin{proof}
Since $\theta(G)\le 7$, we just need to show that there are no $1$-vertices and $5$-vertices in $G$. 

First suppose that $G$ contains a $1$-vertex $v$, which is adjacent to $u$. Delete $v$ from $G$ and apply a coloring $\phi$ to $G-v$. Then the edge $uv$ sees at most $12$ edges in $G-v$, so $|A_\phi(uv)| \ge 1$. Therefore, we can extend $\phi$ to a good coloring of $G$ by coloring $uv$ with the remaining $c \in A_\phi(uv)$, a contradiction. 

Suppose now that $v$ is a $5$-vertex in $G$ and assume that $v$ is adjacent to $u_1, u_2, u_3, u_4, u_5$. Since $\theta(G)\le 7$,  $d(u_i)\le 2$. Let $\phi$ be a good coloring of $G-v$, and apply $\phi$ to $G$, leaving $u_1v, \dots, u_5v$ uncolored. If any of the $u_i$ (say $u_1$) is adjacent to a $4^-$-vertex, then $|A_\phi(u_iv)| \ge 4$ for all $2 \le i \le 5$, and $|A_\phi(u_1v)| \ge 5$. Therefore, by Hall's Theorem, we may extend $\phi$ to a good coloring of $G$. Otherwise, suppose all $u_i$ are adjacent to two $5$-vertices. For each $u_i$, call this $5$-vertex $w_i$, and note that $w_i, w_j$ are never adjacent. Uncolor $u_iw_i$ for $i\in [5]$. If we ever have $A_\phi(u_iw_i) \cap A_\phi(u_jw_j) \neq \emptyset$, then apply the same color to $u_iw_i, u_jw_j$. Note that $|A_\phi(u_iw_i)| \ge 5$, so we may extend a coloring to all $u_iw_i$, and then we have $|A_\phi(u_iv)| \ge 5$, for each $u_iv$ sees a color twice. Therefore, $A_\phi(u_iw_i) \cap A_\phi(u_jw_j) = \emptyset$. If there is some $w_i, w_j$ such that $c(w_i) = c(w_j)$, then we must have $A_\phi(u_iw_i) \cap A_\phi(u_jw_j) \neq \emptyset$, so there is some color $c_1 \in c(w_i)$ not in $c(w_j)$ for each $w_i, w_j$. Let $c_1 \in c(w_1)$ so that $c_1 \not \in c(w_2)$. Color all $u_iw_i$ so that $|A_\phi(u_iv)| \ge 4$. We attempt to color $u_2w_2$ in $c_1$. If this is possible, we have $|A_\phi(u_iv)| \ge 3$ for all $2 < i \le 5$, and $|A_\phi(u_1v)| \ge 4$, so we may extend $\phi$ to a good coloring of $G$ by Hall's Theorem. Otherwise, some $u_iw_i$ is colored by $c_1$, so $|A_\phi(u_1v)| \ge 5$, and otherwise $|A_\phi(u_iv)| \ge 4$ for $i \in [5]\backslash \{1\}$, so again we may extend $\phi$ to a good coloring of $G$ by Hall's Theorem.
\end{proof}

\begin{lm}\label{2-vertex-adjacent-4-vertex}
Every $2$-vertex is adjacent to two $4$-vertices, and every $4$-vertex is adjacent to at most two $2$-vertices.
\end{lm}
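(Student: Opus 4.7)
The plan is to prove both parts by contradiction, in each case applying the minimality of $G$ to a small subgraph, then extending the resulting strong $13$-edge-coloring by bounding the number of colored edges seen by each uncolored edge and invoking Hall's Theorem.

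For the \emph{first claim}, let $v$ be a $2$-vertex with neighbors $u_1, u_2$, and suppose for contradiction that $d(u_1)\le 3$. I would apply minimality to $G - v$ to obtain a coloring $\phi$, then try to color $vu_1$ and $vu_2$. The colored edges seen by $vu_i$ lie only among the edges at $u_i$ other than $vu_i$, at $u_{3-i}$ other than $vu_{3-i}$, and at the neighbors of $u_i$ distinct from $v$. Using $\theta(G)\le 7$ together with Lemma~\ref{no-1-vertex} to bound neighbor degrees, a short subcase analysis on $d(u_1)\in\{2,3\}$ and $d(u_2)\in\{2,3,4\}$ yields $|A_\phi(vu_1)|\ge 2$ and $|A_\phi(vu_2)|\ge 2$ in every case (the tight case is $d(u_1)=3$, $d(u_2)=4$, which still gives $\ge 2$). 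Since $|A_\phi(vu_1)|\ge 2$ implies $|A_\phi(vu_1)\cup A_\phi(vu_2)|\ge 2$, Hall's Theorem extends $\phi$ to $G$, contradicting minimality.

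For the \emph{second claim}, let $v$ be a $4$-vertex with three $2$-neighbors $u_1, u_2, u_3$ and a fourth neighbor $u_4$; note that $d(u_4)\le 3$ by $\theta(G)\le 7$, and $u_4 \ne w_i$ for any $i$ since $d(u_4)<4=d(w_i)$. By the first claim, each $u_i$ $(i\in[3])$ has a $4$-vertex neighbor $w_i \ne v$. I would apply minimality to $G' = G - \{u_1, u_2, u_3\}$ and extend by coloring the six edges $\{vu_i,\, u_iw_i : i \in [3]\}$. Counting colored edges seen by $vu_i$ (namely $vu_4$, the at most $d(u_4)-1\le 2$ other edges at $u_4$, and the three edges at $w_i$ other than $u_iw_i$) gives a total of at most $6$, so $|A_\phi(vu_i)|\ge 7$. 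Counting colored edges seen by $u_iw_i$ (namely $vu_4$, the three other edges at $w_i$, and the edges at the three neighbors of $w_i$ other than $u_i$, each of degree $\le 3$) gives a total of at most $1+3+6=10$, so $|A_\phi(u_iw_i)|\ge 3$.

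I would then verify Hall's condition on these six uncolored edges. Any subfamily of size $\ge 4$ must contain some $vu_i$, whose available set has size $\ge 7$; any subfamily of size $\le 3$ has union of size $\ge 3$ since each individual available set has size $\ge 3$. Hence Hall's Theorem gives a system of distinct representatives, extending $\phi$ to a valid strong $13$-edge-coloring of $G$ and contradicting minimality.

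The main obstacle is not the counting itself but verifying the bounds in degenerate neighborhood configurations near $v$: when two of the $w_i$ coincide, when $w_iw_j\in E(G)$, when some neighbor of $w_i$ equals a $u_j$, or when $u_4$ is itself a $2$-vertex. In each such coincidence, what was counted as a colored edge becomes uncolored (since its endpoint lies in the deleted set $\{u_1, u_2, u_3\}$), so the bounds $|A_\phi(vu_i)|\ge 7$ and $|A_\phi(u_iw_i)|\ge 3$ only improve. This bookkeeping, though routine, is the most delicate part of the argument.
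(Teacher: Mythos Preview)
Your proof is correct. The first claim is handled exactly as in the paper: delete $v$, bound $|A_\phi(vu_i)|\ge 2$, and extend greedily.

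For the second claim your route differs from the paper's. The paper deletes only the central $4$-vertex $v$, leaving just the four edges $vu_1,\dots,vu_4$ uncolored; the bounds $|A_\phi(vu_i)|\ge 5$ for $i\le 3$ and $|A_\phi(vu_4)|\ge 2$ then make Hall's condition immediate and no degenerate-neighborhood analysis is needed. You instead delete the three $2$-vertices $u_1,u_2,u_3$, leaving six edges uncolored. Your bounds $|A_\phi(vu_i)|\ge 7$ and $|A_\phi(u_iw_i)|\ge 3$ are correct, and your Hall verification is fine. The price is the extra bookkeeping for coincidences among the $w_i$, which the paper's approach avoids entirely. (Incidentally, one of your listed degenerate cases cannot occur: since each $w_i$ is a $4$-vertex by the first claim, $w_iw_j\in E(G)$ would force Ore-degree $8$.) Both arguments work; the paper's is shorter because deleting $v$ keeps all the peripheral structure colored and leaves fewer edges to handle.
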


\begin{proof}
Let $v$ be a $2$-vertex in $G$ with neighbors $u_1$ and $u_2$. Suppose that $u_1$ is a $2^+$-vertex and $u_2$ is a $3^-$-vertex. Delete $v$ from $G$ and let $\phi$ be a good coloring of $G-v$. Place $\phi$ on $G$, leaving $u_1v, u_2v$ uncolored. Then, $|A_\phi(u_1v)|, |A_\phi(u_2v)| \ge 2$, so we may color $u_1v$ and $u_2v$ in the two remaining colors.

Suppose that $v$ is a $4$-vertex in $G$ that is adjacent to three $2$-vertices. Let the neighbors of $v$ be $u_1, u_2, u_3, u_4$ with $d(u_1)=d(u_2)=d(u_3)=2$. Let $\phi$ be a good coloring of $G-v$, and place $\phi$ on $G$, leaving $vu_1, vu_2, vu_3, vu_4$ uncolored. Then, $|A_\phi(vu_1)|, |A_\phi(vu_2)|, |A_\phi(vu_3)| \ge 5$, and $|A_\phi(vu_4)| \ge 2$, so we may extend $\phi$ to a good coloring of $G$ by Hall's Theorem.
\end{proof}
	
\begin{lm}\label{no-3-cycles}
There are no $3$-cycles in $G$.
\end{lm}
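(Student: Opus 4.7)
The plan is to suppose that $G$ contains a $3$-cycle $T=v_1v_2v_3$ and derive a contradiction by removing one edge of $T$ and extending a good coloring of the remainder. First I would pin down the possible degree sequences on $T$: by Lemma~\ref{no-1-vertex} every vertex of $G$ has degree in $\{2,3,4\}$; since $\theta(G)\le 7$, no edge of $G$ has two $4$-endpoints; and by Lemma~\ref{2-vertex-adjacent-4-vertex} a $2$-vertex on $T$ would force the other two vertices of $T$ to both be $4$-vertices, giving an edge of degree sum $8$. Hence the degree sequence of $T$ must be either $(3,3,3)$ or $(4,3,3)$, and in both cases $T$ contains an edge $e=v_iv_j$ with $d(v_i)=d(v_j)=3$.

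I would then invoke the minimality of $G$: the graph $G-e$ admits a good strong edge-coloring $\phi$ with at most $13$ colors. To extend $\phi$ to $G$, it suffices to show $|A_\phi(e)|\ge 1$, so the main step is to count the edges of $G-e$ that $e$ sees. These consist of the edges incident to $v_i$ or $v_j$ other than $e$ (at most four), together with the edges disjoint from $\{v_i,v_j\}$ having an endpoint among $v_k, u_i, u_j$, where $v_k$ is the third vertex of $T$ and $u_i, u_j$ are the ``outside'' neighbors of $v_i, v_j$ respectively. Since $d(v_i)=d(v_j)=3$, the Ore bound gives $d(u_i), d(u_j)\le 4$, and $d(v_k)\in\{3,4\}$. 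A routine count then yields at most $2+3+3=8$ such disjoint edges in the $(4,3,3)$ case (and only $1+3+3=7$ in the $(3,3,3)$ case), so $e$ sees at most $12$ edges in total. Hence some color in $\{1,\dots,13\}$ is available for $e$, and coloring $e$ with it extends $\phi$ to a good coloring of $G$, contradicting the minimality of $G$.

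I do not expect any serious obstacle: the only point requiring care is the counting, but any coincidences among $v_k, u_i, u_j$ (for instance $u_i = u_j$, or an outside neighbor coinciding with a neighbor of $v_k$) can only reduce the count, so the bound of $12$ is robust. Note also that the argument makes no use of planarity and no appeal to Hall's theorem or Combinatorial Nullstellensatz; it is a straightforward extension-by-one-color argument, which is what one expects for such a small reducible configuration.
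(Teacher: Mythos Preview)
Your argument is correct. The degree analysis on $T$ is right, the edge $e=v_iv_j$ with $d(v_i)=d(v_j)=3$ always exists, and the count of at most $12$ edges seen by $e$ in $G-e$ is accurate: the edges seen are exactly those meeting $N[v_i]\cup N[v_j]=\{v_i,v_j,v_k,u_i,u_j\}$, and with $d(v_i)=d(v_j)=3$, $d(v_k)\le 4$, $d(u_i),d(u_j)\le 4$, together with the five guaranteed internal edges $v_iv_j,v_iv_k,v_jv_k,v_iu_i,v_ju_j$, one gets at most $18-5-1=12$. Any coincidence among $u_i,u_j,v_k$ only lowers this.

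Your route differs from the paper's. The paper deletes the two $3$-vertices $v_2,v_3$ (the text has a typo, writing $G-\{v_1,v_2\}$), colors the rest, and then greedily extends to the five uncolored edges $v_2u_1,v_3u_2,v_1v_2,v_1v_3,v_2v_3$ using the lower bounds $|A_\phi|\ge 4,4,5,5,9$. Your approach deletes only the single edge $v_2v_3$ and extends by one color. Both are valid; yours is shorter and avoids even the trivial Hall/greedy step, at the cost of requiring the slightly sharper observation that a $(3,3)$-edge in a triangle sees at most $12$ others. Neither argument needs planarity.
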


\begin{proof}
Let $T = v_1v_2v_3v_1$ be a $3$-cycle. We can guarantee that all three vertices are $3^+$ vertices, for if one of $v_1, v_2, v_3$ is a $2$-vertex, the remaining vertices must be adjacent $4$-vertices by Lemma \ref{2-vertex-adjacent-4-vertex}, and this contradicts the Ore-degree restriction on $G$. We may suppose therefore that $v_2, v_3$ are $3$-vertices. Let $u_1, u_2$ be adjacent to $v_2, v_3$ respectively. Now, let $\phi$ be a good coloring of $G - \{v_1, v_2\}$. Then $|A_\phi(v_2u_1)|, |A_\phi(v_3u_2)| \ge 4$, $|A_\phi(v_1v_2)|, |A_\phi(v_3v_1)| \ge 5$, and $|A_\phi(v_2v_3)| \ge 9$. Therefore, we may extend $\phi$ to a good coloring of $G$ by coloring $vv_4, vu_1, vu_2, vu_3$ in order. %by Hall's Theorem.
	\end{proof}

\begin{lm}\label{no-2-vertex-4-cycle}
No $4$-cycle contains a $2$-vertex.
\end{lm}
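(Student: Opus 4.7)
The plan is to suppose, for contradiction, that $G$ contains a $4$-cycle $C=v_1v_2v_3v_4v_1$ with $v_1$ a $2$-vertex. By Lemma~\ref{2-vertex-adjacent-4-vertex}, the two neighbors $v_2,v_4$ of $v_1$ are both $4$-vertices, so the Ore-degree bound forces $d(v_3)\le 3$, and Lemma~\ref{no-3-cycles} rules out the chords $v_1v_3$ and $v_2v_4$, so $C$ is induced. Write $a,b$ for the two other neighbors of $v_2$ and $c,d$ for those of $v_4$; all have degree at most $3$. The argument splits on $d(v_3)\in\{2,3\}$.

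First I would handle $d(v_3)=3$. Let $z$ be the third neighbor of $v_3$; by Lemma~\ref{no-3-cycles}, $z\notin\{a,b,c,d\}$, so $v_3z$ is a genuinely new edge. Delete $v_1$ and, by minimality of $G$, fix a good strong $13$-edge-coloring $\phi$ of $G-v_1$. Extend $\phi$ to $G$ by leaving $v_1v_2$ and $v_1v_4$ uncolored. A direct enumeration shows $v_1v_2$ sees at most $11$ colored edges: three at $v_2$ (namely $v_2v_3,v_2a,v_2b$); three at $v_4$ reached via the uncolored $v_1v_4$ (namely $v_3v_4,v_4c,v_4d$); the single new edge $v_3z$ reached via $v_2v_3$; and at most two each at $a$ and $b$, with $v_3v_4$ counted only once across the two routes. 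Hence $|A_\phi(v_1v_2)|\ge 2$, and symmetrically $|A_\phi(v_1v_4)|\ge 2$. Since the two edges meet at $v_1$, Hall's Theorem applied to two availability sets of size $\ge 2$ produces two distinct colors, completing the extension.

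Next I would handle $d(v_3)=2$. Delete both $v_1$ and $v_3$ and take a good coloring $\phi$ of $G-\{v_1,v_3\}$ by minimality. Extend $\phi$ to $G$ by leaving the four cycle edges $v_1v_2,v_1v_4,v_2v_3,v_3v_4$ uncolored. For each of these four edges, three cycle edges in its neighborhood remain uncolored, so it sees at most $8$ colored edges; for instance $v_1v_2$ sees two at $v_2$, two at $v_4$, and at most four more through $a$ and $b$, while $v_3$ contributes nothing further because $d(v_3)=2$. Thus every availability set has size at least $5$. The four cycle edges pairwise lie on paths of length at most three within $C$ and therefore require four distinct colors, and Hall's condition is trivially satisfied by four sets of size $\ge 5$, extending $\phi$ to $G$.

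Either case contradicts minimality. The main obstacle is keeping the edge count in Case~1 tight: one must carefully avoid double-counting overlaps such as $v_3v_4$, which is reached both through $v_2v_3$ and through $v_1v_4$, and one must verify that Lemma~\ref{no-3-cycles} really forces the third neighbor of $v_3$ to lie outside $\{a,b,c,d\}$, so that the count stays at $11$ rather than slipping to $12$ and leaving only a single available color.
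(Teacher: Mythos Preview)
Your argument is correct and your Case~1 is exactly the paper's proof: delete the $2$-vertex, observe each incident edge has at least two available colors, and extend. The paper does not split on $d(v_3)$, since the bound $d(v_3)\le 3$ already yields the count $\le 11$ uniformly (if $d(v_3)=2$ the edge $v_3z$ simply does not exist and the count drops to $10$), so your Case~2 with the double deletion is unnecessary extra work. Also, your closing worry is inverted: if $z$ coincided with one of $a,b,c,d$, the edge $v_3z$ would be absorbed into an already-counted set and the total would go \emph{down}, not up to $12$; the appeal to Lemma~\ref{no-3-cycles} is therefore not needed for the count.
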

\begin{proof}
Let $F=v_1v_2v_3v_4$ be a $4$-cycle with $d(v_2)=2$. Then Lemma~\ref{2-vertex-adjacent-4-vertex}, $d(v_1)=d(v_3)=4$ and $d(v_4)\le 3$.  Let $\phi$ be a good coloring of $G-v_2$. Then $|A_{\phi}(v_1v_2)|\ge 2$ and $|A_{\phi}(v_2v_3)|\ge 2$, so we can color $v_1v_2, v_2v_3$ to obtain a good coloring of $G$, a contradiction. 
\end{proof}

We now prove a collection of lemmas concerning vertex cuts and separating cycles. The general method is to consider a collection of vertices $S$ such that $G - S$ separates into two components $H_1, H_2$. We then independently color $G_1 = H_1 \cup S$ and $G_2 = H_2 \cup S$ using minimality. We glue $G_1$ and $G_2$ back together to make $G$, and we then consider the set of edges $E_1$ of $G_1$ and $E_2$ of $G_2$ so that each $e \in E_1$ sees some edge in $E_2$. We judiciously permute the colors of $G_1$ or $G_2$ to make sure no colors conflict among the edges of $E_1$ and $E_2$ to achieve a good coloring of $G$.

\begin{lm}\label{lem:no-adjacent-vertex-cut}
There is no vertex cut $\{u, v\}$ if $uv$ is an edge in $G$.
\end{lm}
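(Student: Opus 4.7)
The plan is to suppose for contradiction that $\{u, v\}$ is a vertex cut of $G$ with $uv \in E(G)$, partition the components of $G - \{u, v\}$ into two non-empty unions $H_1, H_2$, and set $G_i := G[V(H_i) \cup \{u, v\}]$ for $i = 1, 2$. Each $G_i$ is planar with $\theta(G_i) \le 7$ and is a proper subgraph of $G$, so by the minimality of $G$ each $G_i$ admits a strong $13$-edge-coloring $\phi_i$. As a first step, I would permute $\phi_2$ so that $\phi_2(uv) = \phi_1(uv) =: c_0$. The goal is then to find a further permutation $\sigma$ of $\{1,\dots,13\}$ with $\sigma(c_0) = c_0$ such that $\phi_1 \cup (\sigma \circ \phi_2)$ is a strong $13$-edge-coloring of $G$, contradicting the minimality of $G$.

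Because $\{u, v\}$ is a cut, the only conflicts between $\phi_1$ and $\sigma \circ \phi_2$ are local: a pair $e_1 \in E(G_1)$ and $e_2 \in E(G_2) \setminus \{uv\}$ conflict in $G$ only if each has an endpoint in $\{u, v\} \cup N_G(u) \cup N_G(v)$ on its own side. Since $\theta(G) \le 7$ forces $d(u) + d(v) \le 7$, the at most $6$ edges of $G$ incident to $\{u, v\}$ must all receive distinct colors in the glued coloring, and the second-neighborhood edges (edges at a neighbor of $u$ in $H_1$ versus edges at $u$ in $G_2$, and symmetrically for $v$) impose further constraints. I would formulate the search for $\sigma$ as a system-of-distinct-representatives problem: for each color $c$ appearing on a boundary edge of $\phi_2$, let $A_c \subseteq \{1,\dots,13\}$ be the set of colors to which $\sigma$ may send $c$ without creating a conflict with $\phi_1$, and then apply Hall's theorem to the family $\{A_c\}$.

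The main obstacle is verifying Hall's condition, particularly when $d(u) + d(v) = 7$ and the $\phi_1$-forbidden color counts near the cut are largest. The key input is that every neighbor $x$ of $u$ satisfies $d(x) \le \min(4, 7 - d(u))$ by Lemma~\ref{no-1-vertex} and the Ore-degree bound, and similarly for $v$. Combined with the absence of $3$-cycles (Lemma~\ref{no-3-cycles}) and the restrictions from Lemmas~\ref{2-vertex-adjacent-4-vertex} and~\ref{no-2-vertex-4-cycle}, a short case analysis on how the neighbors of $u, v$ distribute between $H_1$ and $H_2$ keeps each $|A_c|$ and each union of $A_c$'s large enough for Hall's theorem. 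Extending the resulting partial bijection to a full permutation $\sigma \in S_{13}$ and gluing the two colorings then yields the required strong $13$-edge-coloring of $G$, contradicting the minimality of $G$.
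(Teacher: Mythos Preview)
Your overall framework---split $G$ into $G_1,G_2$ along the cut, color each by minimality, align the color of $uv$, and then permute one side so the pieces glue---is exactly the paper's. The difference lies in how the permutation is produced. The paper exploits the asymmetry: it takes $G_1$ to be the side containing at most two of the five edges $f_1,\dots,f_5$ incident to $\{u,v\}$ other than $uv$, and then performs at most two color transpositions inside $G_1$ so that $c(f_1),c(f_2)$ avoid $\{c(uv),c(f_3),c(f_4),c(f_5)\}$, counting directly that at least $13-4=9$ colors remain available for each swap. You instead seek a full permutation $\sigma$ of $\phi_2$ via Hall's theorem on a family $\{A_c\}$ indexed by boundary colors.

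Your proposal has a genuine gap: the verification of Hall's condition is asserted (``a short case analysis'') but never carried out, and that verification is the entire content of the argument. There is also a subtlety in your formulation you have not addressed. A single color $c$ can appear on several boundary edges of $\phi_2$ at once---for instance $\phi_2(f_3)$ may equal the $\phi_2$-color of some edge at the far endpoint of $f_5$, since those two edges need not see each other inside $G_2$---and then $A_c$ must exclude the $\phi_1$-colors forbidden by \emph{each} such edge, so $A_c$ is an intersection that can be much smaller than any single-edge estimate. Without controlling how many boundary edges share a color and how their forbidden sets combine, the Hall check is not routine, and the lemmas you cite (no triangles, degree restrictions on $2$-vertices) do not obviously suffice. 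The paper's choice to permute on the side with only two $f_i$ is precisely what avoids this difficulty: only two colors ever need to be moved, and the counting is immediate. If you want to salvage the Hall's-theorem route, you should at minimum adopt that asymmetry (permute the smaller side) before attempting the SDR bound.
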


\begin{proof}
 Suppose that $G$ contains a vertex cut $\{u, v\}$ with $uv\in E(G)$. Let $H_1, H_2$ be the distinct components of $G-\{u, v\}$, and let $G_1 = H_1 \cup \{u, v\}$ and $G_2 = H_2 \cup \{u, v\}$ be induced subgraphs of $G$. There are at most $6$ distinct edges incident to $u, v$, one of which is $uv$. There are at most $5$ other edges $f_1, f_2, f_3, f_4, f_5$ incident to $u, v$. We may assume that $G_1$ contains at most two of the $f_i$, so suppose $f_1, f_2$ are in $G_1$. Separately color $G_1, G_2$ in $\phi$ and $\psi$ by the minimality of $G$, and permute the color $c_\phi(uv)$ in $G_1$ to be the same as $c_\psi(uv)$ in $G_2$. If $c(f_1), c(f_2)$ are bad, then they share some color with $c(f_3), c(f_4), c(f_5)$. To remedy this situation, we will permute $c(f_1), c(f_2)$ such that they are not any of the three colors on $c(f_3), c(f_4), c(f_5)$. First, we cannot swap $c(f_1), c(f_2)$ with $c(uv)$, nor can we swap their colors with the colors on $c(f_3), c(f_4), c(f_5)$, so there are $4$ colors on $G_1$ we cannot swap with $c(f_1), c(f_2)$. We cannot swap $c(f_1)$ with itself, but presuming $c(f_1)$ is already bad on $f_1$, then it has been counted as one of the $c(f_3), c(f_4), c(f_5)$. Therefore, there are $4$ bad colors we do not swap $c(f_1)$ with, so we swap $c(f_1)$ with the $5$th available color. In the same manner, we then swap  $c(f_2)$ with the $6$th available color, taking care to only permute with colors on $G_1$. This gives a good coloring of $G_1 \cup G_2 = G$.
    \end{proof}

We now prove Lemmas \ref{lem:no-sep-4-cycle}, \ref{lem:no-sep-5-cycle}, \ref{lem:no-sep-6-cycle}, that there are no separating 4-, 5-, or 6-cycles. These lemmas are extremely important for our proofs using Combinatorial Nullstellens, which often requires a very large case analysis, unless one can guarantee that certain sets of vertices are not adjacent. For these next few proofs, we define the following piece of notation. Consider a cycle $C\subseteq G$. Recall that we have fixed a plane presentation of $G$. We define $\Int(C)$ to be the interior of $C$, including $C$ itself, and $\Ext(C)$ to be the exterior of $C$, including $C$ itself.

Our main tool is to split $G$ into $\Int(C)$ and $\Ext(C)$ and color $\Int(C)$ and $\Ext(C)$ using minimality. In the coloring of $\Int(C)$ and $\Ext(C)$, we color $C$ such that there is a color permutation of $\Int(C)$ which gives $C$ identical colors in both $\Int(C)$ and $\Ext(C)$. We glue $C$ in $\Int(C)$ and $\Ext(C)$ together, giving a coloring for all of $G$. By judicious permutation of colors, we extend this to a good coloring for all of $G$.

\begin{lm}\label{lem:no-sep-4-cycle}
There is no separating 4-cycle in $G$.
\end{lm}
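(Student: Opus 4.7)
Suppose for contradiction that $G$ contains a separating 4-cycle $C = v_1v_2v_3v_4$. Let $G_1 = \Int(C)$ and $G_2 = \Ext(C)$; both are proper subgraphs of $G$, so by minimality each admits a good 13-edge-coloring $\phi_1,\phi_2$. By Lemma~\ref{no-3-cycles} the cycle $C$ is chordless, and in any strong edge-coloring the four edges of $C$ receive four distinct colors, since each pair of them lies in a length-3 path along $C$. Composing $\phi_1$ with a permutation of the thirteen colors, I may assume $\phi_1 = \phi_2$ on $E(C)$; call the four colors used there the \emph{$C$-colors} and the remaining nine the \emph{free colors}. Every non-$C$ edge incident to $V(C)$ sees all four $C$-edges and hence carries a free color in either coloring.

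By Lemmas~\ref{no-1-vertex} and~\ref{no-2-vertex-4-cycle} each $v_i$ satisfies $3 \le d(v_i) \le 4$, so the number of non-$C$ edges at $v_i$ is $n_i^{\text{in}} + n_i^{\text{ex}} \in \{1, 2\}$ where $n_i^{\text{in}}, n_i^{\text{ex}}$ count these on the $G_1$- and $G_2$-sides, with $\sum_i (n_i^{\text{in}} + n_i^{\text{ex}}) \le 8$. Swapping Int and Ext if necessary, assume $k := \sum_i n_i^{\text{in}} \le 4$, hence $\sum_i n_i^{\text{ex}} \le 8-k$. A case analysis of length-$\le 3$ paths across $C$, using chordlessness of $C$ and the triangle-free property of $G$, shows that the only $G_1$-edges that can conflict with a non-$C$ $G_2$-edge are those incident to $V(C)$ (type-1) or those lying between interior vertices one of which is adjacent to $V(C)$ (type-2); moreover, for a type-1 edge at $v_i$ the bad free colors come from non-$C$ $G_2$-edges at $\{v_{i-1},v_i,v_{i+1}\}$ (and possibly $v_{i+2}$ via a common neighbor), while for a type-2 edge at an interior vertex $x \sim v_i$ the bad colors come from non-$C$ $G_2$-edges at $v_i$ (and $v_{i+2}$ if $x \sim v_{i+2}$). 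In every case the total number of bad free colors for any fixed interface edge is at most $\sum_j n_j^{\text{ex}} \le 8-k$.

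The final step is to choose a permutation $\pi$ of the nine free colors (fixing all $C$-colors) so that after applying $\pi$ to $\phi_1$ every interface $G_1$-edge avoids its bad set. Grouping interface edges by their current color in $\phi_1$ and letting $A_c$ be the intersection of allowed sets over edges with current color $c$, one has $|A_c| \ge 9 - (8-k) = k+1$. Hall's Theorem then delivers a system of distinct representatives for this family, which extends to the required permutation $\pi$; combining $\pi \circ \phi_1$ with $\phi_2$ yields a good 13-edge-coloring of $G$, contradicting minimality. The principal obstacle is the careful geometric bookkeeping of which $G_1$-edges can cross-conflict with $G_2$-edges, and then verifying Hall's condition when type-2 interface edges (potentially numerous) share color groups with type-1 edges; the key fact is that type-2 bad sets are substantially smaller (at most $n_i^{\text{ex}}+n_{i+2}^{\text{ex}} \le 4$), which keeps the intersections $A_c$ large enough for Hall to apply.
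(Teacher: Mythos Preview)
Your overall strategy---split along $C$, color the two sides by minimality, match on $E(C)$, then permute the nine non-$C$ colors on one side to kill cross-conflicts---is the same as the paper's. But the execution has two genuine gaps.

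\textbf{Gap 1: the bad-set count for type-1 edges is wrong.} You assert that for a type-1 interior edge $e=v_iu$ the bad colors come only from non-$C$ $G_2$-edges at $\{v_{i-1},v_i,v_{i+1},v_{i+2}\}$. This misses a whole class of conflicts: if $v_i$ has an exterior non-$C$ edge $v_iw$, then $e$ also sees every edge at $w$ via the length-3 path $u\,v_i\,w\,w'$. These are ``type-2'' $G_2$-edges in your own terminology, and they contribute up to $d(w)-1\le 3$ additional bad colors per exterior neighbour $w$ of $v_i$. Your bound of $\sum_j n_j^{\text{ex}}\le 8-k$ is therefore false in exactly the configuration where a $4$-vertex $v_i$ has one interior and one exterior pendant; the paper handles this case (its Case~2 and Case~3) by also permuting the exterior edge at $v_i$ so that it avoids the colors at the interior endpoint $u$---a second permutation on the other side that your one-sided scheme does not provide.

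\textbf{Gap 2: Hall's condition is not verified.} Even if every individual allowed set has size $\ge k+1$, the sets $A_c$ are \emph{intersections} over all interface edges currently carrying color $c$, and the number $|S|$ of such color classes can be much larger than $k+1$ because type-2 interior edges (of which there may be many, with many distinct colors) are included. You need $|\bigcup_{c\in T}A_c|\ge|T|$ for every $T\subseteq S$, and the inequality $|A_c|\ge k+1$ gives this only for $|T|\le k+1$. Your final sentence acknowledges this is ``the principal obstacle'' and then asserts without proof that the smaller type-2 bad sets make Hall go through; that is precisely the part that requires work and is not established.

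The paper avoids both issues by dropping the single-permutation idea and instead doing a short case analysis on how many interior pendants are incident to $4$-vertices (none, one, or two), swapping specific edge colors one at a time with explicit counts, and in the mixed $4$-vertex case permuting edges on \emph{both} sides of $C$.
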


 \begin{proof}
Suppose that $C$ is a separating $4$-cycle. We separately color the graphs of $\Int(C)$ and $\Ext(C)$. By Lemma \ref{lem:no-adjacent-vertex-cut}, $\Int(C)$ must contain at least two interior edges, and $\Ext(C)$ must contain at most four interior edges. If $C$ has two interior edges, they must not be adjacent to the same edge, so we split into two cases.

%\vspace{0.25cm}
\noindent\textbf{Case 1:} $C$ has no interior edges incident to a $4$-vertex.

Say that $e, f$ are both non-cycle edges incident to $3$-vertices. We must have both $e, f$ as interior edges, otherwise we contradict Lemma~\ref{lem:no-adjacent-vertex-cut}. By minimality, we may color $\Int(C)$ and $\Ext(C)$ separately in $\phi$, $\psi$. Note that all colors on $C$ are distinct, so we permute the colors of $C$ in $\Int(C)$ to be identical to the colors of $C$ in $\Ext(C)$. Glue $\Int(C)$ and $\Ext(C)$ back together. We now permute the colos of $e, f$. When permuting $c_\phi(e), c_\phi(f)$, there are at most $4$ colors on the $4$-vertices $v_1, v_2$ which $c_\phi(e), c_\phi(f)$ cannot be permuted with. There are an additional $4$ colors on $C$ we cannot permute with. This leaves $4$ colors remaining, so we permute $c_\phi(e)$ to the $9$th color and $c_\phi(f)$ to the $10$th color. Note that no color on $C$ is changed by permutation of $c_\phi(e)$ or $c_\phi(f)$, for $e, f$ see all colors on $C$.

Note: as in Case 1, we will often require that the colors of the edges we permute in $\Int(C)$ are distinct from the colors on $C$. In this manner, we may isolate $\Int(C)$ and $\Ext(C)$ so that permutations do not cross over between the two different subgraphs. Occasionally, we may permit certain select, non-cycle edges to be colored identically to $C$. At these times, we will justify why this is permitted.

\vspace{0.25cm}
\noindent\textbf{Case 2:} $C$ has one interior edge incident to a $4$-vertex.

 Let $e_1, e_2$ both be incident to the same $4$-vertex, with $e_1$ in $\Int(C)$. Let $f_1, f_2$ be incident to $3$-vertices. We can guarantee both exist, otherwise Lemma \ref{lem:no-adjacent-vertex-cut} is violated. Color $\Int(C)$ and $\Ext(C)$ in $\phi, \psi$ respectively by minimality. All colors on $C$ are distinct in both graphs, so permute the colors on $\Int(C)$ such that we may glue $\Int(C)$ and $\Ext(C)$ back together. Note that no edge incident to $C$ shares a color on $C$, so we may safely permute the colors on our edges. First, permute $c_\phi(e_1)$ if it is bad. This edge sees at most $7$ colors in $\Ext(C)$, so we swap it with the $8$th available color. If $c_\phi(f_1), c_\phi(f_2)$ are bad, then we swap them as well. Both see at most $7$ colors in $\Ext(C)$. So, we are able to permute this to get good colors too. Finally, we may need to permute $c_\psi(e_2)$, in the case that $c_\psi(e_2)$ conflicts with some color on an endpoint of $e_1$. This edge sees at most $9$ colors in $\Int(C)$, so we swap with the $10$th available color.

\vspace{0.25cm}
\noindent\textbf{Case 3:} $C$ has at least two interior edges, both incident to a $4$-vertex. Note that $C$ may additionally have one interior edge incident to a $3$-vertex

 Let $e_1, e_2$ be two interior edges incident to a $4$-vertex, and let $f_1, f_2$ be two exterior edges incident to a $4$-vertex. Let $g$ be an interior edge incident to a $3$-vertex, if it exists. Color $\Int(C)$ and $\Ext(C)$ in $\phi$ and $\psi$ respectively using minimality. Note that every edge incident to $C$ receives a distinct color from those on $C$, so we need not worry about changing colors on $C$ when we permute $e_i$, $f_i$, or $g$. Now, every color on $C$ is distinct, so we may permute the colors of $\Int(C)$ so that each edge on $\Int(C)$ has the same color in $\Ext(C)$. Glue $\Int(C)$ and $\Ext(C)$ back together. Suppose $c_\phi(e_1) \neq c_\phi(e_2)$. Then, they cannot be swapped with the $4$ colors on the edges of $C$, and both see at most $5$ additional colors in $\Ext(C)$, for a total of $9$ colors. Thus, we have $4$ colors of space for $e_1, e_2$, and thus sufficient space to perform our swap. If $c_\phi(e_1) = c_\phi(e_2)$, then $c_\phi(e_1)$ may conflict with an additional two other colors, for a total of $11$, and we swap $c_\phi(e_1)$ with the $12$th available color. If $g$ exists, and $c_\phi(g)$ is bad, then we also permute $c_\phi(g)$. So, $c_\phi(g)$ may not be permuted with any color on $C$, nor any color in $\Ext(C)$ on the two possible additional edges which $g$ sees, nor any color on $e_1, e_2$. This is a total of $8$ colors, so we may again permute $c_\phi(g)$. Finally, in the case that $f_1, f_2$ are incident to the same vertices as $e_1, e_2$, we must permute $c_\psi(f_1), c_\psi(f_2)$, to not conflict with the colors of the edges incident to the endpoints of $e_1, e_2$. Through an identical method of counting for $e_1, e_2$, we are again able to perform this permutation.

Therefore, there are no separating $4$-cycles in $G$.
\end{proof}

The proofs of the following lemmas follow similar ideas to Lemma~\ref{lem:no-sep-4-cycle} but more tedious, so we put them in the appendix.  

\begin{lm}\label{lem:no-sep-5-cycle}
There are no separating $5$-cycles in $G$.
\end{lm}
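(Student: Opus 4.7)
The plan is to adapt the strategy of Lemma~\ref{lem:no-sep-4-cycle} to the longer cycle. Suppose $C=v_1v_2v_3v_4v_5$ is a separating $5$-cycle. By Lemma~\ref{no-3-cycles}, $C$ has no chords, so its five edges form an induced $5$-cycle, and any pair of cycle edges lies on a path of length at most $3$ in $C$. By minimality, color the subgraphs $\Int(C)$ and $\Ext(C)$ with strong $13$-edge-colorings $\phi$ and $\psi$. Both $\phi$ and $\psi$ use five distinct colors on the edges of $C$, so after permuting the $13$ colors in $\Int(C)$ I may assume $\phi$ and $\psi$ agree on $C$, and glue the two colorings to obtain a tentative coloring of $G$.

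First I would enumerate the non-cycle edges at $C$. By Lemma~\ref{no-1-vertex} each $v_i$ has degree at most $4$, hence at most two non-cycle edges, giving at most $10$ in total; by Lemma~\ref{lem:no-adjacent-vertex-cut} applied to each pair $\{v_i,v_{i+1}\}$, neither side of $C$ is too sparse. The pairs $(e,f)$ that may conflict after gluing are those with $e$ on one side of $C$ and $f$ on the other that see one another through $C$; because $C$ is chordless this is only possible when $e$ is incident to $v_i$ and $f$ to $v_{i-1}$, $v_i$, or $v_{i+1}$ for some $i$. I would then split into cases according to the distribution of non-cycle edges around $C$, mirroring the three-case template of Lemma~\ref{lem:no-sep-4-cycle} (no interior edge at a $4$-vertex, exactly one such edge, and at least two such edges).

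In each case the permutation step runs as in the $4$-cycle proof. To repair a bad interior edge $e$ at $v_i$ I list its forbidden colors: the five on $C$, the at most six on non-cycle edges at $v_{i-1},v_i,v_{i+1}$ in $\Ext(C)$, plus the colors of any already-permuted interior edges that share an endpoint with $e$. Since every non-cycle edge at $v_i$ sees all five cycle edges, none of them can carry a color of $C$, so a color swap inside $\Int(C)$ that avoids $C$ leaves the gluing intact. Counting as in Lemma~\ref{lem:no-sep-4-cycle}, the total of forbidden colors stays below $13$ and a fresh color is available. A symmetric argument, if necessary, fixes any exterior edge sharing a vertex with a permuted interior edge.

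The main obstacle is the densest case, when two or three of the $v_i$ are $4$-vertices carrying non-cycle edges on both sides of $C$. Several permutations at consecutive cycle vertices can then interact, and the naive count of forbidden colors may approach the boundary of $13$. I expect to resolve this exactly as in Case~3 of Lemma~\ref{lem:no-sep-4-cycle}: permute the interior edges in a carefully chosen order, exploiting that two non-cycle edges at the same $v_i$ must only avoid a common pool of forbidden colors (not disjoint pools), and arguing that a color introduced by an earlier swap at $v_i$ is already counted among the forbidden colors of any later edge that would see it.
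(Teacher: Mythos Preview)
Your overall strategy matches the paper's: split along $C$, color the two pieces, align the five cycle colors by a permutation, and then repair the non-cycle boundary edges by color swaps inside one piece. The case division by how many interior edges meet a $4$-vertex is also the paper's organization.

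There is, however, a genuine gap in your permutation step. The claim ``every non-cycle edge at $v_i$ sees all five cycle edges'' is false on a $5$-cycle: an edge $e$ incident to $v_i$ sees only the four cycle edges within distance~$2$ of $v_i$ and misses the opposite edge $v_{i+2}v_{i+3}$. Nothing in a minimality coloring of $\Int(C)$ therefore prevents $c_\phi(e)=c_\phi(v_{i+2}v_{i+3})$, and when that happens any global swap in $\Int(C)$ that moves $c_\phi(e)$ simultaneously moves the color on $v_{i+2}v_{i+3}$, which breaks the gluing on $C$. Adding the fifth cycle color to your forbidden list does not repair this, because the permutation acts on both edges at once.

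The paper avoids this by \emph{not} coloring $\Int(C)$ (or $\Ext(C)$) directly from minimality. Instead it first colors $\Int(C)-C$ (respectively $\Ext(C)-C$), and then extends to the cycle edges together with the boundary non-cycle edges using Hall's theorem so as to obtain a system of \emph{distinct} representatives. This guarantees that every boundary non-cycle edge carries a color different from all five cycle colors, after which the swaps are genuinely confined away from $C$. In the densest case (two interior edges at $4$-vertices) even this is not enough: the paper splits further according to whether the two interior edges sit at the same $4$-vertex or at different ones, and in Subcase~3.2 it deliberately allows specified pairs of non-seeing boundary edges to share a color in order to push the forbidden-color count below~$13$. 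These refinements are where the real content lies, and they do not transfer mechanically from the $4$-cycle argument; your final paragraph underestimates the extra work needed here.
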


\begin{lm}\label{lem:no-sep-6-cycle}
There is no separating $6$-cycle in $G$.
\end{lm}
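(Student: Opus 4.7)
Suppose for contradiction that $C = v_1v_2v_3v_4v_5v_6$ is a separating $6$-cycle in $G$. Following the template of Lemma~\ref{lem:no-sep-4-cycle}, the plan is to color $\Int(C)$ and $\Ext(C)$ separately by minimality to obtain good colorings $\phi$ and $\psi$, permute $\phi$ so the two colorings agree on $C$, glue the two subgraphs back along $C$, and finally permute the colors of the non-cycle edges incident to $C$ to remove any remaining conflicts. Since $\theta(G)\le 7$ and each cycle vertex already has two cycle-neighbors, each $v_i$ has at most two non-cycle edges, so at most $12$ non-cycle edges meet $C$ in total, partitioned between $\Int(C)$ and $\Ext(C)$.

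The first new wrinkle compared with the $4$-cycle case is that opposite cycle edges (those at cyclic distance $3$) do not see each other, so the six edges of $C$ need not all receive distinct colors; the pattern of repetitions among cycle colors under $\phi$ can differ from that under $\psi$. To reconcile this, I would uncolor the six edges of $C$ in both graphs and construct a single joint coloring of $C$ compatible with $\phi$ on $\Int(C)\setminus C$ and with $\psi$ on $\Ext(C)\setminus C$. Each cycle edge sees at most the four non-cycle edges at its own endpoints, at most four more non-cycle edges two steps away on $C$, and the four nearby cycle edges, giving at most $12$ forbidden colors; a greedy/backtracking construction succeeds in a $13$-color palette, exploiting that opposite pairs $v_iv_{i+1}$ and $v_{i+3}v_{i+4}$ may share a color when needed.

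After gluing, an interior non-cycle edge $e$ at $v_i$ can conflict only with an exterior non-cycle edge $f$ at $v_j$ lying in a common path of length at most $3$. The direct routes through $C$ force $|i-j|\le 1\pmod 6$, giving at most $6$ such exterior candidates; the only other possibility is an indirect route $v_i-u-v_j-w$ through an interior vertex $u$ with two cycle-neighbors, and the absence of $3$-cycles (Lemma~\ref{no-3-cycles}) together with the absence of separating $4$-cycles (Lemma~\ref{lem:no-sep-4-cycle}) bounds the number and geometry of such $u$. A careful count shows that when permuting $c_\phi(e)$, the total set of forbidden colors -- cycle colors seen by $e$, colors on other non-cycle edges at $v_i$ in $\Int(C)$, and colors of conflicting exterior edges -- has size at most $12$. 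Hence a valid swap target always survives, and the corrections can be carried out one at a time in an order that avoids cascading conflicts, as in the $4$-cycle proof.

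The principal obstacle I expect is the case analysis on the distribution of non-cycle edges around $C$. Writing $a_i$ and $b_i$ for the numbers of interior and exterior non-cycle edges at $v_i$, we have $a_i+b_i=d(v_i)-2\le 2$, and Lemma~\ref{lem:no-adjacent-vertex-cut} rules out degenerate distributions, but many subcases remain. The tightest occur when several consecutive vertices concentrate their interior (or exterior) edges, so that a single edge conflicts with the maximum possible number of edges on the opposite side; there the generic $12$-color bound is nearly saturated, and a more refined argument distinguishing subcases by the actual degrees of the relevant vertices -- in particular whether non-cycle neighbors are $2$-vertices (forcing their other neighbors to be $4$-vertices by Lemma~\ref{2-vertex-adjacent-4-vertex}) -- is required to close the gap. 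The overall structure mirrors the three-case split in Lemma~\ref{lem:no-sep-4-cycle} but with substantially more subcases, which justifies deferring the complete argument to the appendix.
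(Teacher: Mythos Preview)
Your high-level strategy (split along $C$, color the two sides by minimality, reconcile on $C$, then fix the non-cycle edges by color swaps) is indeed the paper's framework, but your specific mechanism for the reconciliation step has a genuine gap that breaks the permutation argument.

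You propose to uncolor $C$ in both $\phi$ and $\psi$ and then construct one joint coloring of $C$ compatible with both sides. The problem is the subsequent ``swap'' step. A swap of colors $a$ and $b$ must be a global permutation of colors on all of $\Int(C)$ to preserve the strong coloring there; but if $b$ is one of the cycle colors, the swap also recolors that cycle edge and destroys the agreement with $\Ext(C)$. Your forbidden-set count for an interior edge $e$ includes only the ``cycle colors seen by $e$'' (at most four of the six), so nothing prevents you from swapping into a cycle color that $e$ does not see. Equivalently, your joint coloring gives no control over whether the interior non-cycle edges carry colors distinct from those on $C$, and without that distinctness the permutation trick is unavailable.

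The paper handles this quite differently. It first reduces (by a domination argument) to the case where $C$ has alternating $3$- and $4$-vertices. It then colors $\Int(C)$ and $\Ext(C)$ separately, but in a controlled way---often coloring $\Int(C)\setminus C$ and $\Ext(C)\setminus C$ first and then extending to $C$ via Hall's theorem---so that (i) the \emph{pattern} of repetitions among opposite cycle edges is forced to be the same on both sides, and (ii) the particular non-cycle edges that will later be permuted receive colors distinct from every color on $C$. Point (ii) is exactly what makes the swaps safe. Achieving (i) and (ii) is where all the work is: the cases are organised by how many interior $g_i$ (edges at $4$-vertices) there are and which $f_i$ (edges at $3$-vertices) lie inside, and in several configurations the paper even adds an auxiliary edge (e.g.\ $w_2w_6$ or $w_1w_5$) to the smaller graph before invoking minimality, precisely to force the needed distinctness. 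Your sketch does not produce any of this structure, so as written it cannot be completed along the lines you indicate.
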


\begin{lm}\label{4-cycle-two-4-vertices}
Every $4$-cycle contains two $4$-vertices and two $3$-vertices.
\end{lm}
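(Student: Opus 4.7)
The plan is to first narrow down the possible cyclic degree sequences on a $4$-cycle and then rule out all but the claimed one via reducibility. By Lemma \ref{no-1-vertex} every vertex of $G$ has degree $2$, $3$, or $4$, and by Lemma \ref{no-2-vertex-4-cycle} no $4$-cycle contains a $2$-vertex, so on any $4$-cycle $C=v_1v_2v_3v_4$ each $v_i$ has degree $3$ or $4$. Since $\theta(G)\le 7$ forbids two adjacent $4$-vertices, the cyclic sequence $(d(v_1),d(v_2),d(v_3),d(v_4))$ cannot contain two consecutive $4$'s. Up to rotation and reflection, the only possibilities are $(3,3,3,3)$, $(3,3,3,4)$, and $(3,4,3,4)$; what must be shown is that only the last occurs.

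For the $(3,3,3,3)$ case, I would let $u_i$ be the unique off-cycle neighbor of $v_i$ (distinct from the cycle vertices by Lemma \ref{no-3-cycles}), delete the opposite pair $v_1,v_3$, and apply minimality to obtain a good $13$-edge-coloring $\phi$ of $G-\{v_1,v_3\}$. Six edges are then uncolored: the four cycle edges and the two pendants $v_1u_1,v_3u_3$. A direct count, using $\theta(G)\le 7$ to bound the degrees of $u_2,u_4$ and to limit the neighborhoods of the $u_i$, should give $|A_\phi(v_iv_j)|\ge 5$ for the four cycle edges and $|A_\phi(v_iu_i)|\ge 2$ for the two pendants; Hall's Theorem then extends $\phi$ to a good $13$-coloring of $G$.

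For the $(3,3,3,4)$ case, relabel so that $d(v_1)=4$ and $d(v_2)=d(v_3)=d(v_4)=3$, with off-cycle neighbors $u_1,u_1'$ of $v_1$ and $u_j$ of $v_j$ for $j=2,3,4$. Because $v_1$ is a $4$-vertex, the Ore bound forces every neighbor of $v_1$ to have degree at most $3$; in particular $d(u_1),d(u_1')\le 3$. I would then delete $v_1$, apply minimality to color $G-v_1$ with some $\phi$, and count available colors for the four uncolored edges $v_1v_2,v_1v_4,v_1u_1,v_1u_1'$. The low-degree constraint on $u_1$ and $u_1'$ is precisely what keeps the seen-edge count small enough for a Hall-type conclusion.

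The hard part will be the $(3,3,3,4)$ case, where four uncolored edges meet at the single vertex $v_1$ and the counting is barely sufficient. If the direct Hall argument runs out of room, the backup plan is to invoke Combinatorial Nullstellensatz (Theorem \ref{nullstellens}) through the polynomial reduction algorithm of Section 2 applied to the graph polynomial centered at $v_1$. Lemmas \ref{lem:no-adjacent-vertex-cut} and \ref{lem:no-sep-4-cycle}--\ref{lem:no-sep-6-cycle} further restrict how the off-cycle neighbors can coincide or be adjacent, pruning many sub-cases before any heavy computation is required.
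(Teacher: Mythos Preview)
Your overall shape (reduce to degree sequences $(3,3,3,3)$, $(3,3,3,4)$, $(3,4,3,4)$ and eliminate the first two) is fine, but the attack on the $(3,3,3,4)$ case has a genuine gap. If you delete only the $4$-vertex $v_1$ and leave $v_2,v_3,v_4$ in the colored graph, then the pendant edge $v_1u_1$ (with $d(u_1)\le 3$) can see as many as $14$ already-colored edges: two at $u_1$, two each at $v_2,v_4,u_1'$, and up to three each at the two non-$v_1$ neighbors of $u_1$ (whose degrees are only bounded by $4$). Thus $|A_\phi(v_1u_1)|$ may be $0$. No Hall argument, and no application of Combinatorial Nullstellensatz, can proceed from an empty list; the polynomial method still needs $|S_i|>k_i\ge 0$, so it requires at least one available color per edge. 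The same count hits $v_1u_1'$. Your Hall bound for the cycle edges ($\ge 5$) is also too weak on its own: with four mutually-seeing cycle edges and two pendants all drawing from the same five colors, Hall can fail for the full six-edge set even in the $(3,3,3,3)$ sketch.

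The paper handles both cases at once, but with a different and essential idea you are missing: it deletes the \emph{entire} $4$-cycle $F$ and \emph{adds} an auxiliary edge $u_2u_4$ (between a neighbor of $v_1$ and the off-cycle neighbor of $v_3$) to form $G'$, first using Lemmas~\ref{lem:no-sep-4-cycle}--\ref{lem:no-sep-5-cycle} to certify that $u_2\neq u_4$ and $u_2u_4\notin E(G)$. The color $c_3$ of this fake edge is then transplanted onto $e_8=v_3u_4$ (and, when possible, onto $e_6=v_1u_2$), and the two remaining colors $\{c_1,c_2\}$ at $u_2$ are pushed onto $e_2,e_3$ or related edges. A substantial case analysis on whether $c_1,c_2,c_3$ already appear at $u_3,u_5$ or at edges adjacent to $e_5$ then finishes the extension. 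The ``add a phantom edge and reuse its color'' trick is what buys the missing slack; simple vertex deletion does not.
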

\begin{proof}
Let $F = v_1v_2v_3v_4v_1$ be a $4$-cycle. We have already shown that all vertices on $F$ are $3^+$-vertices, by Lemma \ref{no-2-vertex-4-cycle}. So, we show that if $F$ contains only $3^+$-vertices, then two of those vertices are $4$-vertices. We may suppose that $v_1$ is a $4$ -vertex, since this contains the case where all vertices in $F$ are $3$-vertices. Therefore, let $v_1$ be adjacent to vertices $u_1, u_2$. Let every other vertex $v_i$ be adjacent to $u_{i+1}$. We label the edges as follows, $e_1 = v_1v_2, e_2 = v_2v_3, e_3 = v_3v_4, e_4 = v_4v_1, e_5 = v_1u_1, e_6 = v_1u_2, e_7 = v_2u_3, e_8 = v_3u_4, e_9 = v_4u_5$ (see Figure~\ref{fig309}).

\begin{figure}[h]
\begin{tikzpicture}[scale=1.5, every node/.style={font=\small}]

\tikzstyle{vertex}=[circle, draw, inner sep=1pt, minimum size=6pt]

%\node[vertex] (v1) at (3,2) {$v_1$};
\node[vertex] (v2) at (4,2) {$v_2$};
\node[vertex] (v3) at (4,1) {$v_3$};
\node[vertex] (v4) at (2,1) {$v_4$};
\node[vertex] (v1) at (2,2) {$v_1$};
\node[vertex] (u1) at (1,2) {$u_1$};
\node[vertex] (u2) at (2,3) {$u_2$};
\node[vertex] (u3) at (4,3) {$u_3$};
\node[vertex] (u4) at (5,1) {$u_4$};
\node[vertex] (u5) at (1,1) {$u_5$};

\draw (v1) -- (v2)  node[midway, above]{$e_1$};
\draw (v2) -- (v3) node[midway, right] {$e_2$};
\draw (v3) -- (v4) node[midway, above] {$e_3$};
\draw (v4) -- (v1) node[midway, right] {$e_4$};
\draw (v1) -- (u1) node[midway, above] {$e_5$};
\draw (v1) -- (u2) node[midway, right] {$e_6$};
\draw (v2) -- (u3) node[midway, right] {$e_7$};
\draw (v3) -- (u4) node[midway, above] {$e_8$};
\draw (v4) -- (u5) node[midway, above] {$e_9$};
\end{tikzpicture}
\caption{Figure for Lemma~\ref{4-cycle-two-4-vertices}}
\label{fig309}
\end{figure}
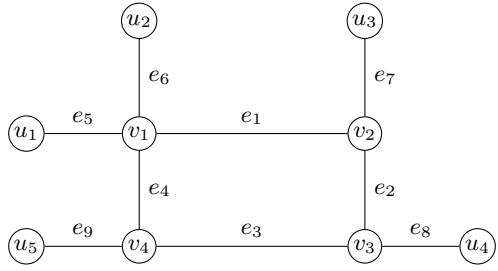

 We claim that $u_2u_4$ is not an edge and that $u_2 \neq u_4$. So, suppose that either $u_2u_4$ is an edge or that $u_2 = u_4$. Then, we have a cycle $C$ formed either by $u_2v_1v_2v_3u_4u_2$ or $u_2v_1v_2v_3u_4$ depending on $u_2 = u_4$ or not. Note that $C$ is either a $4$- or $5$-cycle. Since either $u_2u_4 \in E(G)$ or $u_2 = u_4$, then by planarity we may assume that $u_2, u_4$ are in $\Ext(F)$. If $u_3$ is in $\Int(C)$, then $C$ separates $u_3$ and $u_5$, contradicting Lemma \ref{lem:no-sep-4-cycle} or Lemma \ref{lem:no-sep-5-cycle}. Otherwise, we must have $u_3$ in $\Ext(C)$ and thus in $\Int(F)$, separating $u_3$ from $u_2$, and so $F$ is a separating $4$-cycle contradicting Lemma \ref{lem:no-sep-4-cycle}. In both cases, we achieve a contradiction, and so  $u_2u_4$ is not an edge and that $u_2 \neq u_4$, as claimed.

So, $u_2u_4$ is not an edge. Then, delete $F$ from $G$ and add $u_2u_4$ to the graph $G-F$. Call this new graph $G'$. Moreover, $G'$ is strictly smaller than $G$, so we may color $G'$ by $\phi$. In the coloring $\phi$, we have $|c(u_2)| = 2$, so set $c(u_2) = \{c_1, c_2\}$. Finally, let $c_3$ be the color on the edge $u_2u_4$. Now, apply $\phi$ to the graph $G$, leaving the edges $e_1, \dots, e_9$ uncolored. Note that $c_3\not\in A_\phi(e_8)$, for $u_2u_4$ has the color $c_3$ in the coloring $\phi$. Therefore, we apply $c_3$ to $e_8$.

	\vspace{0.25cm}
\noindent\textbf{Case 1:} 
No edge adjacent to $e_5$ is colored in $c_3$.

We color $e_6$ by $c_3$. Then, $|A_\phi(e_5)| \ge 2$, $|A_\phi(e_7)|, |A_\phi(e_9)| \ge 3$, $ |A_\phi(e_4)|, |A_\phi(e_1)| \ge 5$, and $|A_\phi(e_2)|, |A_\phi(e_3)| \ge 6$. We must now consider multiple subcases based on whether or not the vertices $u_3, u_5$ are incident to edges colored in $c_1$ or $c_2$.
		
			\vspace{0.25cm}
\noindent\textbf{Subcase 1.1:} 
Either $u_3$ sees $c_1$ and $c_2$, and $u_5$ sees at least one of $c_1, c_2$, or $u_3$ sees at least one of $c_1, c_2$, and $u_5$ sees $c_1$ and $c_2$.

The cases are symmetric, so we suppose $c_1, c_2 \in c(u_3)$, and either $c_1\in c(u_5)$ or $c_2 \in c(u_5)$. Then, $|A_\phi(e_5)| \ge 2$, $|A_\phi(e_7)|, |A_\phi(e_9)| \ge 3$, $|A_\phi(e_2)|, |A_\phi(e_3)| \ge 6$, $ |A_\phi(e_4)| \ge 6$, and $|A_\phi(e_1)| \ge 7$. So we may extend $\phi$ to a good coloring.

			\vspace{0.25cm}
\noindent\textbf{Subcase 1.2:} 
At most one of $c_1, c_2$ is in $c(u_3)$ and at least one of $c_1, c_2$ is in $c(u_5)$, or $c(u_5)$ contains at at least one of $c_1, c_2$ and at least one of $c_1, c_2$ is in $c(u_3)$.
			
Again, the cases are symmetric, so suppose that $u_3$ sees at most one of $c_1, c_2$ and $u_5$ sees at least one of $c_1, c_2$. Without loss of generality, let $c_1 \in c(u_5)$. Either $c_1 \not\in c(u_3)$ or $c_2\not\in c(u_3)$. Suppose first that $c_1 \not\in c(u_3)$. Then, color $e_2$ by $c_1$ so that $|A_\phi(e_5)|, |A_\phi(e_7)| \ge 2$, $|A_\phi(e_9)| \ge 3$, $|A_\phi(e_1)|, |A_\phi(e_3)| \ge 5$, and $|A_\phi(e_4)| \ge 6$. Then we may extend $\phi$ to a good coloring of $G$. Otherwise, if we can only color $e_2$ by $c_2$, then $|A_\phi(e_5)|, |A_\phi(e_7)|, |A_\phi(e_9)| \ge 2$, $|A_\phi(e_1)|, |A_\phi(e_3)| \ge 5$, and $|A_\phi(e_4)| \ge 6$. We may extend $\phi$ to a good coloring by coloring in order $e_7, e_5, e_9, e_1, e_3, e_4$.
			
			\vspace{0.25cm}
\noindent\textbf{Subcase 1.3:} 
Neither $u_3$ nor $u_5$ see $c_1, c_2$.

First, we mention that $A_\phi(e_7) \cap A_\phi(e_9) = \emptyset$. Indeed, suppose there exists $c_4$ in the intersection $A_\phi(e_7) \cap A_\phi(e_9)$. Color $e_7$ and $e_9$ in $c_4$. Thus,  $|A_\phi(e_5)| \ge 1$, $ |A_\phi(e_4)|, |A_\phi(e_1)| \ge 4$, and $|A_\phi(e_2)|, |A_\phi(e_3)| \ge 5$, so we may extend $\phi$ to a good coloring of $G$. Now, apply $c_1, c_2$ to $e_2, e_3$. Then $|A_\phi(e_5)| \ge 2$, $|A_\phi(e_1)|, |A_\phi(e_4)| \ge 5$, and either $|A_\phi(e_7)| \ge 2$ and $|A_\phi(e_9)| \ge 1$ or $|A_\phi(e_7)| \ge 1$ and $|A_\phi(e_9)| \ge 2$. Therefore, we color in the order $e_9, e_5, e_7, e_1, e_4$ or $e_7, e_5, e_9, e_1, e_4$ depending on $|A_\phi(e_9)| < |A_\phi(e_7)|$ or $|A_\phi(e_7)| < |A_\phi(e_9)|$.

			\vspace{0.5cm}
	\noindent\textbf{Case 2:} There exists an edge adjacent to $e_5$ colored in $c_3$.
	
In this case, we cannot apply $c_3$ to $e_6$. We first show that $A_\phi(e_7) \cap A_\phi(e_9) = \emptyset$, so suppose the converse holds. Then, there exists $c_4 \in A_\phi(e_7) \cap A_\phi(e_9)$, so apply $c_4$ to $e_7$ and $e_9$. If either $u_3$ or $u_5$ sees one of $c_1, c_2$, then $|A_\phi(e_5)|, |A_\phi(e_6)| \ge 2$, $|A_\phi(e_2)|, |A_\phi(e_3)| \ge 5$, and either $|A_\phi(e_4)|\ge 5$ and $|A_\phi(e_1)| \ge 6$ or $|A_\phi(e_4)|\ge 6$ and $|A_\phi(e_1)| \ge 5$. In either case we extend $\phi$ to a good coloring of $G$. Alternatively, suppose only one of $u_3$ or $u_5$ see $c_1$ or $c_2$. Without loss of generality, let $c_1 \in c(u_3)$ and $c_1, c_2 \not\in c(u_5)$. Then, color $e_3$ in $c_1$. We have $|A_\phi(e_5)|, |A_\phi(e_6)| \ge 2$, $|A_\phi(e_2)| \ge 4$, and $|A_\phi(e_2)|, |A_\phi(e_3)| \ge 5$, so we may extend $\phi$ to a good coloring of $G$ once again. Thus, $A_\phi(e_7) \cap A_\phi(e_9) = \emptyset$ as claimed. We now cover a variety of different subcases, which are more or less identical versions of the above.
	
			\vspace{0.25cm}
\noindent\textbf{Subcase 2.1:} Either $u_3$ sees $c_1$ and $c_2$, and $u_5$ sees at least one of $c_1, c_2$, or $u_3$ sees at least one of $c_1, c_2$, and $u_5$ sees $c_1$ and $c_2$.
			
By symmetry, we prove only the first case. Then, $|A_\phi(e_4)|, |A_\phi(e_5)|, |A_\phi(e_6)|, |A_\phi(e_9)| \ge 3$, $|A_\phi(e_2)|, |A_\phi(e_3)| \ge 6$, $|A_\phi(e_4)| \ge 7$, and $|A_\phi(e_1)| \ge 8$. Therefore, by coloring in the order $e_4, e_5, e_6, e_9, e_2, e_3, e_4, e_1$, we may extend $\phi$ to a good coloring of $G$.
			
			\vspace{0.25cm}
\noindent\textbf{Subcase 2.2:} Either $u_3$ sees at least one of $c_1, c_2$ and $u_5$ sees at most one of $c_1, c_2$, or $u_5$ sees at least one of $c_1, c_2$ and $u_3$ sees at least one.
			
Again, by symmetry, we prove only the first case. Without loss of generality. Suppose that $u_3$ sees $c_1$. Then, we may color $e_3$ in one of $c_1$ or $c_2$. In either case, $|A_\phi(e_7)|, |A_\phi(e_9)| \ge 2$, $|A_\phi(e_5)|, |A_\phi(e_6)| \ge 3$, $|A_\phi(e_2)| \ge 5$, $|A_\phi(e_4)| \ge 6$, and $|A_\phi(e_1)| \ge 7$. We may color $e_5, e_7, e_7, e_9$, since $A_\phi(e_7) \cap A_\phi(e_9) = \emptyset$, leaving $|A_\phi(e_2)| \ge 1$, $|A_\phi(e_4)| \ge 2$, and $|A_\phi(e_1)| \ge 3$, so we may extend $\phi$ to a good coloring of $G$. 
			
			\vspace{0.25cm}
\noindent\textbf{Subcase 2.3:} Neither $u_3$ nor $u_5$ see $c_1, c_2$.
			
Then, apply $c_1, c_2$ to $e_2, e_3$. Since $A_\phi(e_7) \cap A_\phi(e_9) = \emptyset$. After coloring $e_2, e_3$, we have $|A_\phi(e_7) \cup A_\phi(e_9)| \ge 4$, $|A_\phi(e_5)|, |A_\phi(e_6)| \ge 3$, and $|A_\phi(e_1)|, |A_\phi(e_4)| \ge 6$, so we may extend $\phi$ to a good coloring of $G$.

Thus, we have shown that $G$ can be colored in every case, so the Lemma is proven.
\end{proof}

\begin{lm}\label{no-adjacent-4-faces}
No $4$-face shares an edge with another $4$-face.
\end{lm}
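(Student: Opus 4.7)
The plan is to assume for contradiction that two $4$-faces $F_1 = v_1v_2v_3v_4$ and $F_2 = v_1v_2v_5v_6$ share the edge $v_1v_2$, and show this configuration is reducible. The first step is to pin down the local degree sequence. By Lemma~\ref{4-cycle-two-4-vertices}, each $4$-face has exactly two $4$-vertices and two $3$-vertices, and since $\theta(G)\le 7$ forbids two adjacent $4$-vertices, the two $4$-vertices on a $4$-face must lie on a diagonal. Because $v_1$ and $v_2$ are adjacent, they cannot both be $4$-vertices; applying the diagonal condition to $F_1$ and $F_2$ simultaneously forces, up to symmetry, that $\{v_1, v_3, v_5\}$ are the three $4$-vertices and $\{v_2, v_4, v_6\}$ are the three $3$-vertices on $F_1\cup F_2$. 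In particular $v_2$ is a $3$-vertex whose three neighbors $v_1, v_3, v_5$ are all $4$-vertices.

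Before attempting the extension I would use the earlier structural lemmas to clean up the local picture. Lemma~\ref{no-3-cycles} forbids identifications of pendant vertices that would create a triangle; Lemmas~\ref{lem:no-sep-4-cycle}, \ref{lem:no-sep-5-cycle}, and \ref{lem:no-sep-6-cycle} rule out identifications that produce short separating cycles around $F_1\cup F_2$; and Lemma~\ref{lem:no-adjacent-vertex-cut} blocks the remaining edge-cut degeneracies. After this pruning, the seven pendant edges leaving $F_1\cup F_2$ terminate at distinct external vertices, and the configuration is rigid.

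To establish reducibility, I would delete the seven interior edges of $F_1\cup F_2$, namely $v_1v_2$, $v_2v_3$, $v_3v_4$, $v_4v_1$, $v_2v_5$, $v_5v_6$, $v_6v_1$, and, by the minimality of $G$, obtain a strong $13$-edge-coloring $\phi$ of the remainder. For each uncolored edge $e$, I would bound $|A_\phi(e)|$ by counting the colors used in $\phi$ on edges that see $e$. A greedy schedule in the spirit of the proof of Lemma~\ref{4-cycle-two-4-vertices}, coloring the tighter edges first and finishing with Hall's Theorem across the remaining uncolored edges, should extend $\phi$ to all of $G$. For any subcase where the slack is too small for Hall to conclude, I would fall back on the Combinatorial Nullstellensatz applied to
\begin{equation*}
f(\x) \;=\; \prod_{e \text{ sees } e'} \bigl(x_e - x_{e'}\bigr)
\end{equation*}
ranging over pairs of uncolored edges, and use the polynomial-reduction algorithm of Section~2.2 to extract the required coefficient.

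The main obstacle is the shared edge $v_1v_2$: its endpoint $v_1$ is a $4$-vertex whose other two face-neighbors $v_4, v_6$ are themselves adjacent to the $4$-vertices $v_3$ and $v_5$, so $v_1v_2$ sees on the order of fifteen distinct edges and its available color set after $\phi$ is very tight. Certifying that a compatible color survives for $v_1v_2$ in every subcase, in tandem with Hall's condition for the other six uncolored edges, is the delicate point; this is precisely the setting in which the polynomial-reduction algorithm of Section~2.2 becomes essential, since direct expansion of $f(\x)$ over this many factors is computationally infeasible.
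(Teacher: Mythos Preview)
Your setup and degree analysis are correct, but the extension scheme you outline cannot succeed. If you uncolor only the seven face edges and keep the pendant edges colored, then in the worst case (each external neighbor taking its maximum permitted degree) the available-color counts are
\[
|A_\phi(v_3v_4)|,\ |A_\phi(v_5v_6)| \ge 2,\qquad
|A_\phi(v_2v_3)|,\ |A_\phi(v_2v_5)|,\ |A_\phi(v_1v_4)|,\ |A_\phi(v_1v_6)| \ge 3,\qquad
|A_\phi(v_1v_2)| \ge 4.
\]
Among these seven edges every pair sees each other except $\{v_3v_4,\,v_5v_6\}$, so your polynomial $f(\x)$ has degree $\binom{7}{2}-1=20$. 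But the exponent budget is only $\sum_e\bigl(|A_\phi(e)|-1\bigr)=1+1+2+2+2+2+3=13<20$, so no top-degree monomial can satisfy the bounds $k_e<|A_\phi(e)|$ and Combinatorial Nullstellensatz is simply inapplicable. The same arithmetic blocks any greedy or Hall argument: seven mutually constraining edges with lists of sizes $2,2,3,3,3,3,4$ need not admit an SDR.

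The paper does something quite different, and the difference is the whole point. Instead of merely uncoloring edges, it deletes the five vertices $v_2,v_3,v_4,v_5,v_6$ (your labeling) and \emph{adds} an auxiliary edge from the surviving $4$-vertex $v_3$ (paper's $v_1$) to the external neighbor of a far $3$-vertex of the configuration. Coloring this modified graph by minimality produces a color $c_1$ which can then be placed simultaneously on two non-conflicting edges of the original configuration; two further colors $c_2,c_3$ sitting on the surviving pendant edges at the kept $4$-vertex are then recycled onto interior edges in a short case analysis. This edge-addition trick is precisely what manufactures the missing slack, and without it (or some equivalent device for forcing repeated colors) the configuration is not reducible by the bare list sizes alone.
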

	\begin{proof}
	Let $F_1 = v_1v_2v_3v_4v_1$ and $F_2 = v_2v_3v_5v_6v_2$. By Lemma \ref{4-cycle-two-4-vertices}, both $4$-faces must have two $4$-vertices, forcing either $v_2, v_4, v_5$ or $v_1, v_3, v_6$ to be $4$-vertices. By symmetry, suppose $v_1, v_3, v_6$ are all $4$-vertices, and that $v_2, v_4, v_5$ are all $3$-vertices. Finally, let $u_1, u_2$ be adjacent to $v_1$, let $u_3, u_4$ be adjacent to $v_6$, and let $u_5, u_6, u_7$ be adjacent to $v_3, v_4, v_5$ respectively. Call the edges $e_1 = v_1v_2, e_2 = v_2v_3, e_3 = v_3v_4, e_4 = v_4v_1, e_5 = v_3v_5, e_6 = v_5v_6, e_7 = v_2v_6, e_8 = v_1u_1, e_9 = v_1u_2, e_{10} = v_6u_3, e_{11} = v_6u_4, e_{12} = v_3u_5, e_{13} = v_4u_6, e_{14} = v_5u_7$ (see Figure~\ref{fig310}). 

\begin{figure}[h]
\begin{tikzpicture}[scale=1.5, every node/.style={font=\small}]

\tikzstyle{vertex}=[circle, draw, inner sep=1pt, minimum size=6pt]

\node[vertex] (v1) at (2,2) {$v_1$};
\node[vertex] (v2) at (3,2) {$v_2$};
\node[vertex] (v3) at (3,1) {$v_3$};
\node[vertex] (v4) at (2,1) {$v_4$};
\node[vertex] (v5) at (4,1) {$v_5$};
\node[vertex] (v6) at (4,2) {$v_6$};
\node[vertex] (u1) at (1,2) {$u_1$};
\node[vertex] (u2) at (2,3) {$u_2$};
\node[vertex] (u3) at (4,3) {$u_3$};
\node[vertex] (u4) at (5,2) {$u_4$};
\node[vertex] (u5) at (3,0) {$u_5$};
\node[vertex] (u6) at (1,1) {$u_6$};
\node[vertex] (u7) at (5,1) {$u_7$};

\draw (v1) -- (v2)  node[midway, below]{$e_1$};
\draw (v2) -- (v3) node[midway, right] {$e_2$};
\draw (v3) -- (v4) node[midway, above] {$e_3$};
\draw (v4) -- (v1) node[midway, right] {$e_4$};
\draw (v3) -- (v5) node[midway, above] {$e_5$};
\draw (v5) -- (v6) node[midway, right] {$e_6$};
\draw (v6) -- (v2) node[midway, below] {$e_7$};
\draw (v1) -- (u1) node[midway, above] {$e_8$};
\draw (v1) -- (u2) node[midway, left] {$e_9$};
\draw (v6) -- (u3) node[midway, left] {$e_{10}$};
\draw (v6) -- (u4) node[midway, above] {$e_{11}$};
\draw (v3) -- (u5) node[midway, left] {$e_{12}$};
\draw (v4) -- (u6) node[midway, above] {$e_{13}$};
\draw (v5) -- (u7) node[midway, above] {$e_{14}$};
\end{tikzpicture}
\caption{Figure for Lemma~\ref{no-adjacent-4-faces}}
\label{fig310}
\end{figure}
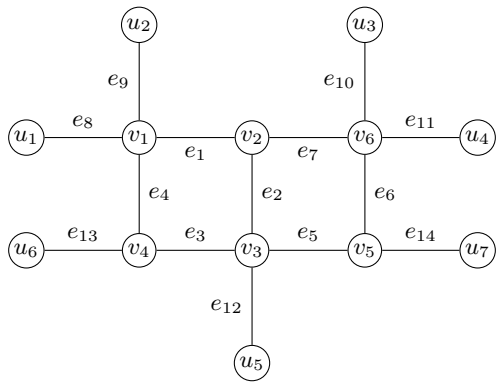
    
	Now, construct the graph $G'$ by adding the edge $v_1u_7$ to $G - F_2 - \{v_4\}$. It is possible to add such an edge, for no edge $v_1u_7$ exists in $G$, otherwise the Ore-degree is violated. Moreover, if $v_1 = u_7$, then by planarity there is no edge $v_6u_6$, so we complete the proof switching $v_6$ for $v_1$ and $u_7$ for $u_6$.  Thus, by the minimality of $G$, we color $G'$ in $\phi$, and let $c_1$ be the color on $v_1u_7$. Apply $\phi$ to $G$, leaving every edge except $e_8, e_9$ uncolored. We may apply $c_1$ to $e_1$ and $e_{14}$. In which case, $|A_\phi(e_{13})| \ge 1, |A_\phi(e_{10})|, |A_\phi(e_{11})| \ge 2$, $|A_\phi(e_4)| \ge 3$, $|A_\phi(e_{12})| \ge 4$, $|A_\phi(e_3)|, |A_\phi(e_6)| \ge 5$, $|A_\phi(e_6)| \ge 6$, $|A_\phi(e_5)| \ge 7$, $|A_\phi(e_2)| \ge 8$.

    Note now that $e_4$ does not see $e_6$, for then $u_6 \in \{v_6, v_5\}$ or, symmetrically, $u_7 \in \{v_1, v_4\}$. Whatever the case, either Lemma \ref{no-3-cycles} is violated, or Lemma \ref{lem:no-sep-4-cycle} is violated. Then, color $e_{13}, e_{10}$, and $e_{11}$ in that order. This is possible, for although $|A_\phi(e_{13})| \ge 1$ and $|A_\phi(e_{10})|, |A_\phi(e_{11})| \ge 2$, if $e_{13}$ sees one of $e_{10}, e_{11}$, then either $u_6 \in \{u_3, u_4\}$, $u_6u_4$ or $u_6u_3$ is an edge, or $u_6 = v_6$. In every case, either $e_{10}$ or $e_{11}$ has one extra color, or there is one less edge to color. Note that if any of $e_3, e_4$, or $e_{12}$ see $e_{10}, e_{11}$, then we have undercounted the number of colors available to them. So, either way, after coloring $e_{13}, e_{10}$, and $e_{11}$, we have $|A_\phi(e_4)| \ge 2$,  $|A_\phi(e_{12})| \ge 3$, and $|A_\phi(e_3)| \ge 4$. In addition, $|A_\phi(e_6)| \ge 3$, $|A_\phi(e_5)|, |A_\phi(e_7)| \ge 4$, and $|A_\phi(e_2)| \ge 5$. 
		
As before, let the colors on $e_8, e_9$ be $c_2, c_3$. Note that $c_2, c_3 \not \in c(u_7)$, so our proof technique is to attempt to color $e_5, e_6$ using the colors $c_2, c_3$. We consider four different cases.
	
	\vspace{0.25cm}
	\noindent\textbf{Case 1:} We may color $e_5, e_6$ in $c_2, c_3$.
	
	In this case, $|A_\phi(e_{12})| \ge 1$, $|A_\phi(e_4)| \ge 2$, $|A_\phi(e_3)|, |A_\phi(e_7)| \ge 4$, and $|A_\phi(e_2)| \ge 5$. So, we may extend $\phi$ to a good coloring.
	
	\vspace{0.25cm}
	\noindent\textbf{Case 2:} Both $e_5, e_6$ can be colored in at most one of $c_2, c_3$.
	
	In this case, $c(u_5)$ and $c(v_6)$ must both contain one of $c_2, c_3$. So, $|A_\phi(e_4)| \ge 2$, $|A_\phi(e_6)|, |A_\phi(e_{12})| \ge 3$, $|A_\phi(e_5)| \ge 4$, $|A_\phi(e_3)|, |A_\phi(e_7)| \ge 5$, and $|A_\phi(e_2)| \ge 6$. Coloring $e_5$ in the remaining color leaves $|A_\phi(e_4)|, |A_\phi(e_6)|, A_\phi(e_{12})| \ge 2$, $|A_\phi(e_3)|, |A_\phi(e_7)| \ge 5$, and $|A_\phi(e_2)| \ge 6$. Since $e_4$ does not see $e_6$,  we color $e_6$ to leave $A_\phi(e_{12})| \ge 1$, $|A_\phi(e_4)| \ge 2$, $|A_\phi(e_3)|, |A_\phi(e_7)| \ge 4$, and $|A_\phi(e_2)| \ge 5$, in which case we may extend $\phi$ to a good coloring by Hall's Theorem. Otherwise, we color $e_6$ in the remaining color, which leaves $|A_\phi(e_4)|, |A_\phi(e_{12})| \ge 2$, $|A_\phi(e_5)| \ge 3$, $|A_\phi(e_3)|, |A_\phi(e_7)| \ge 5$, and $|A_\phi(e_2)| \ge 6$, which again may be colored by Hall's Theorem. In both cases, we may extend $\phi$ to a good coloring of $G$.
	
	\vspace{0.25cm}
	\noindent\textbf{Case 3:} $e_6$ cannot be colored by $c_2$ or $c_3$.

	Then, we must have $c_2, c_3 \in c(v_6)$, so $e_5$ cannot be colored by either $c_1$ or $c_2$. Then, $|A_\phi(e_4) \cup A_\phi(e_6)| \ge 5$, $|A_\phi(e_{12})| \ge 3$, $|A_\phi(e_3)|, |A_\phi(e_5)| \ge 4$, $|A_\phi(e_7)| \ge 6$, and $|A_\phi(e_2)| \ge 7$. So, we may extend $\phi$ to a good coloring of $G$.
	
	\vspace{0.25cm}
	\noindent\textbf{Case 4:} $e_5$ cannot be colored by $c_2$ or $c_3$.
	
	We have already given a proof when $c_2, c_3 \in c(v_6)$, so suppose that $c_2, c_3\in c(u_5)$. So, we must not have one of $c_2, c_3$ in $c(v_6)$. Suppose that $c_2 \not \in c(v_6)$. Then, color $e_6$ in $c_2$. Then, $|A_\phi(e_4)| \ge 2$, $|A_\phi(e_5)|, |A_\phi(e_{12})| \ge 3$, $|A_\phi(e_7)| \ge 4$, and $|A_\phi(e_2)|, |A_\phi(e_3)| \ge 7$, so we may extend $\phi$ to a good coloring of $G$ by Hall's Theroem.

	We can color $G$ in every case, so no two $4$-faces share an edge.
	\end{proof}

\begin{lm}\label{no-2-vertex-5-face} There is no $2$-vertex on a $5$-face.
\end{lm}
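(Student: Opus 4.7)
Plan. Suppose for contradiction that a 5-face $F=v_1v_2v_3v_4v_5v_1$ contains a 2-vertex $v_1$. By Lemma~\ref{2-vertex-adjacent-4-vertex} and $\theta(G)\le 7$, $v_2$ and $v_5$ are 4-vertices; moreover, neither $v_3$ nor $v_4$ can be a 2-vertex (otherwise by Lemma~\ref{2-vertex-adjacent-4-vertex} its face-neighbor would have to be a 4-vertex, contradicting $\theta(G)\le 7$), so $d(v_3)=d(v_4)=3$, and the remaining two neighbors of each of $v_2, v_5$ have degree at most $3$. Write $x_1,x_2$ for the other neighbors of $v_2$, $y_1,y_2$ for those of $v_5$, $z_3$ for the third neighbor of $v_3$, and $w_4$ for the third neighbor of $v_4$; Lemma~\ref{no-3-cycles} rules out $v_2v_5, v_2v_4, v_3v_5 \in E(G)$. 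I then delete $v_1$ and take a strong $13$-edge-coloring $\phi$ of $G-v_1$, guaranteed by the minimality of $G$. A careful enumeration shows each of $v_1v_2$ and $v_1v_5$ sees exactly $13$ edges in $G$, with the other member of the pair being the unique uncolored neighbor; hence $|A_\phi(v_1v_2)|, |A_\phi(v_1v_5)|\ge 1$. Since $v_1v_2$ sees $v_1v_5$, Hall's Theorem extends $\phi$ unless $A_\phi(v_1v_2) = A_\phi(v_1v_5) = \{c\}$ for a single common color $c$, which is the main obstruction.

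To handle the obstruction, observe that the twelve colored edges in each neighborhood must use twelve pairwise distinct colors, none equal to $c$. In particular the seven ``shared'' edges $\{v_2v_3, v_2x_1, v_2x_2, v_3v_4, v_4v_5, v_5y_1, v_5y_2\}$ carry seven distinct colors, and the remaining five colors each appear exactly once on a ``left-unique'' edge of $\{v_3z_3\}\cup\{\text{four further edges at }x_1,x_2\}$ and once on a ``right-unique'' edge of $\{v_4w_4\}\cup\{\text{four further edges at }y_1,y_2\}$, inducing a forced color-bijection between the two sides. Because $v_3z_3$ sees $v_4w_4$ through $v_3v_4$, they cannot share a color. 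The plan is then to break the deadlock by recoloring one of these unique edges to $c$: e.g.\ if $v_3z_3$ has color $\delta$, changing $v_3z_3$ to $c$ frees $\delta$ from the neighborhood of $v_1v_2$, so that $A_\phi(v_1v_2)$ acquires $\delta\ne c$, and we can complete the extension by coloring $v_1v_2$ with $\delta$ and $v_1v_5$ with $c$. The feasibility of such a recoloring---namely that $c$ is absent from the second neighborhood of the chosen edge outside $S_1\cup S_2$---is the main technical obstacle; we handle it by trying in turn each of the ten candidate unique edges, where the Ore-degree bound $\theta(G)\le 7$ constrains the size of each local neighborhood, and the absence of small separating cycles (Lemmas~\ref{lem:no-sep-4-cycle}--\ref{lem:no-sep-6-cycle}) forbids the configurations that would let $c$ simultaneously block all ten candidates, so at least one recoloring succeeds and yields the desired contradiction.
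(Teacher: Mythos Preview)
Your setup is sound: the degree pattern on $F$ is forced as you say, and deleting $v_1$ alone does reduce the problem to extending $\phi$ to two edges each with at least one available color, with the single obstruction $A_\phi(v_1v_2)=A_\phi(v_1v_5)=\{c\}$. The decomposition into seven shared edges and five left-/right-unique edges carrying the five complementary colors is also correct.

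The gap is exactly where you flag it. You write that the feasibility of recoloring some unique edge to $c$ ``is the main technical obstacle'' and then assert that trying all ten candidates must succeed because ``the absence of small separating cycles forbids the configurations that would let $c$ simultaneously block all ten candidates.'' This is not an argument, and I do not see how to make it one. Take $v_3z_3$: recoloring it to $c$ requires $c$ to be absent from the entire strong neighborhood of $v_3z_3$ on the $z_3$ side, which (when $d(z_3)=4$) comprises up to nine edges entirely disjoint from $S_1\cup S_2$. Nothing in Lemmas~\ref{lem:no-sep-4-cycle}--\ref{lem:no-sep-6-cycle} constrains where the color $c$ may appear on those edges; those lemmas restrict the \emph{cycle structure} of $G$, not the distribution of a color class in $\phi$. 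A single $c$-colored edge can simultaneously block several candidates (for instance, an edge near both $z_3$ and an outer neighbor of $x_1$), so no simple pigeonhole bound works either. To turn this into a proof you would need either a genuine counting argument bounding how many candidates one $c$-edge can obstruct, or a more elaborate recoloring cascade---neither of which is supplied.

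The paper takes a different route: it deletes all of $F$, not just $v_1$, then builds an auxiliary graph $G'$ by adding a new edge between two of the outer neighbors, colors $G'$ by minimality, and uses the color of the auxiliary edge to pre-color a specific edge of the configuration. This trades your single delicate recoloring for a longer but routine greedy/Hall extension over all eleven edges incident to $F$, with a small case split. The auxiliary-edge trick is what makes the counts close; your approach of deleting only $v_1$ leaves you with too little slack to avoid the obstruction without the unproven recoloring claim.
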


\begin{proof}
By the Ore-degree condition and Lemma \ref{2-vertex-adjacent-4-vertex}, a $5$-face $F$ with a $2$-vertex must have two $4$-vertices and two $3$-vertices. Let $F = v_1v_2v_3v_4v_5v_1$. Suppose that $v_1$ is a $2$-vertex. Then, $v_2, v_5$ must be $4$-vertices and $v_3, v_4$ must be $3$-vertices. Let $v_5$ see $u_1, u_2$, let $v_2$ see $u_3, u_4$, let $v_3$ see $u_5$, and let $v_4$ see $u_6$. Denote the edges of $F$ by $e_1 = v_1v_2, e_2 = v_2v_3, e_3 = v_3v_4, e_4 = v_4v_5, e_5 = v_5v_1, e_6 = v_5u_1, e_7 = v_5u_2, e_8 = v_2u_3, e_9 = v_2u_4, e_{10} = v_3u_5, e_{11} = v_4u_6$ (see Figure~\ref{fig311}).

\begin{figure}[h]
\begin{tikzpicture}[scale=1.5, every node/.style={font=\small}]

\tikzstyle{vertex}=[circle, draw, inner sep=1pt, minimum size=6pt]

\node[vertex] (v1) at (3,2) {$v_1$};
\node[vertex] (v2) at (4,2) {$v_2$};
\node[vertex] (v3) at (4,1) {$v_3$};
\node[vertex] (v4) at (2,1) {$v_4$};
\node[vertex] (v5) at (2,2) {$v_5$};
\node[vertex] (u1) at (1,2) {$u_1$};
\node[vertex] (u2) at (2,3) {$u_2$};
\node[vertex] (u3) at (4,3) {$u_3$};
\node[vertex] (u4) at (5,2) {$u_4$};
\node[vertex] (u5) at (5,1) {$u_5$};
\node[vertex] (u6) at (1,1) {$u_6$};

\draw (v1) -- (v2)  node[midway, above]{$e_1$};
\draw (v2) -- (v3) node[midway, right] {$e_2$};
\draw (v3) -- (v4) node[midway, above] {$e_3$};
\draw (v4) -- (v5) node[midway, right] {$e_4$};
\draw (v5) -- (v1) node[midway, above] {$e_5$};
\draw (v5) -- (u1) node[midway, above] {$e_6$};
\draw (v5) -- (u2) node[midway, right] {$e_7$};
\draw (v2) -- (u3) node[midway, right] {$e_8$};
\draw (v2) -- (u4) node[midway, above] {$e_9$};
\draw (v3) -- (u5) node[midway, above] {$e_{10}$};
\draw (v4) -- (u6) node[midway, above] {$e_{11}$};
\end{tikzpicture}
\caption{Figure for Lemma~\ref{no-2-vertex-5-face}}
\label{fig311}
\end{figure}
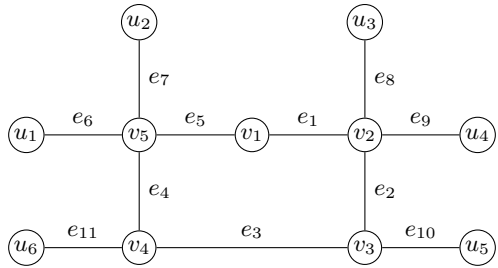

First, $e_{10}$ does not see $e_5$, for otherwise either $u_5 = v_5$ or $u_5 \in \{u_1, u_2\}$. The first case cannot hold by Lemma \ref{no-3-cycles}, and the second case cannot hold, for then $u_6$ must be contained in the interior of $u_5v_5v_4v_2u_5$, so $u_5v_5v_4v_2u_5$ is a separating $4$-cycle that separates $u_6$ and $v_2$, contradicting Lemma \ref{lem:no-sep-5-cycle}. Construct a new graph $G'$ from $G-F$ by adding the edge $u_4u_6$. Such an edge may not already exist, otherwise $u_4u_6v_4v_3v_2u_4$ is a separating $5$-cycle, contradicting Lemma \ref{lem:no-sep-5-cycle}. So, color $G'$ by $\phi$, and let $c_1$ be the color on $u_4u_6$. Apply $\phi$ to $G$. Then, apply $c_1$ to $e_{11}$, which is possible for $u_4$ saw $c_1$ on the edge $u_4u_6$ in $G'$. So, $|A_\phi(e_6)|, |A_\phi(e_7)| \ge 2$, $|A_\phi(e_8)|, |A_\phi(e_9)|, |A_\phi(e_{10})| \ge 3$, $|A_\phi(e_2)|, |A_\phi(e_4)| \ge 5$, $|A_\phi(e_3)| \ge 6$, $|A_\phi(e_5)| \ge 8$, and $|A_\phi(e_1)| \ge 9$.

\noindent\textbf{Case 1:} It is possible to apply $c_1$ to $e_9$.

Then, apply $c_1$ to $e_9$. In this case, $|A_\phi(e_6)|, |A_\phi(e_7)|, |A_\phi(e_8)| \ge 2$, $|A_\phi(e_{10})| \ge 3$, $|A_\phi(e_2)|, |A_\phi(e_4)| \ge 5$, $|A_\phi(e_3)| \ge 6$, and $|A_\phi(e_1)|, |A_\phi(e_5)| \ge 8$. We may color $|A_\phi(e_6)|, |A_\phi(e_7)|, |A_\phi(e_8)|, e_{10}$ for either $e_8, e_{10}$ do not see $e_6, e_7$ or they have an extra color. Then, $|A_\phi(e_3)|, |A_\phi(e_4)| \ge 2$, $|A_\phi(e_2)| \ge 3$, $|A_\phi(e_1)| \ge 4$, and $|A_\phi(e_5)| \ge 5$, so we may extend $\phi$ to a good coloring of $G$.

\noindent\textbf{Case 2:} It is impossible to apply $c_1$ to $e_9$.

If it is impossible to apply $c_1$ to $e_9$, then we must have $c_1 \in c(u_3)$, for $u_4$ saw $c_1$ in $G'$. Therefore, $|A_\phi(e_6)|, |A_\phi(e_7)| \ge 2$, $|A_\phi(e_8)|, |A_\phi(e_9)|, |A_\phi(e_{10})| \ge 3$, $|A_\phi(e_4)| \ge 5$, $|A_\phi(e_2)|, |A_\phi(e_3)| \ge 6$, $|A_\phi(e_5)| \ge 8$, and $|A_\phi(e_1)| \ge 9$. Color $e_6$ and $e_7$, leaving $|A_\phi(e_4)|, |A_\phi(e_8)|, |A_\phi(e_9)|, |A_\phi(e_{10})| \ge 3$, $|A_\phi(e_3)| \ge 4$, $|A_\phi(e_2)|, |A_\phi(e_5)| \ge 6$, and $|A_\phi(e_1)| \ge 7$. %Now, by Claim \ref{e_10-not-see-e_5}, 
Since $e_{10}$ does not see $e_5$, we consider two cases, where $A_\phi(e_5) \cap A_\phi(e_{10}) \neq \emptyset$ and $A_\phi(e_5) \cap A_\phi(e_{10}) = \emptyset$.

\noindent\textbf{Subcase 2.1:}  $A_\phi(e_5) \cap A_\phi(e_{10}) \neq \emptyset$.

Then, give $e_5$ and $e_{10}$ the same color. Then, $|A_\phi(e_4)|, |A_\phi(e_8)|, |A_\phi(e_9)| \ge 2$, $|A_\phi(e_3)| \ge 3$, $|A_\phi(e_2)| \ge 5$, and $|A_\phi(e_1)| \ge 6$. Note that either $e_4$ does not see $e_8, e_9$ or we have undercounted its number of available colors. Thus, we may color in order $e_8, e_9, e_3, e_4, e_2, e_1$ to extend $\phi$ to a good coloring of $G$.

\noindent\textbf{Subcase 2.2:}  $A_\phi(e_5) \cap A_\phi(e_{10}) = \emptyset$.

In this case, $|A_\phi(e_4)|, |A_\phi(e_8)|, |A_\phi(e_9)|, |A_\phi(e_{10})| \ge 3$, $|A_\phi(e_3)| \ge 4$, $|A_\phi(e_2)|, |A_\phi(e_5)| \ge 6$, and $|A_\phi(e_1)| \ge 7$ with $|A_\phi(e_5) \cup A_\phi(e_{10})| \ge 9$, so we may extend $\phi$ to a good coloring of $G$.

These are all possible subcases, resolving Case 2.

 We have colored $G$ in every case, so there is no $2$-vertex on a $5$-face.
\end{proof}
	
\begin{lm}\label{5-face-two-4-vertices}
Every $5$-face contains two $4$-vertices.
\end{lm}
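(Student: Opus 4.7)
The plan is to argue by contradiction. Suppose $F = v_1v_2v_3v_4v_5v_1$ is a 5-face with fewer than two 4-vertices. By Lemma~\ref{no-2-vertex-5-face}, every $v_i$ is a 3- or 4-vertex, and the Ore-degree condition $\theta(G)\le 7$ forbids two adjacent $v_i$ from both being 4-vertices; hence at most two $v_i$'s are 4-vertices, and if two do appear they must lie at distance $2$ along $F$. So up to relabelling, the only configurations remaining to rule out are: (i) all five $v_i$ are 3-vertices; or (ii) exactly $v_1$ is a 4-vertex and $v_2,v_3,v_4,v_5$ are 3-vertices. Label each external neighbor of $v_i$ on $F$ by $u_i$, with an additional external neighbor $u_1'$ of $v_1$ in Case~(ii). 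By Lemma~\ref{2-vertex-adjacent-4-vertex} every such external vertex has degree at least 3, with the possible exception of $u_1$ or $u_1'$ in Case~(ii).

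The strategy parallels the proofs of Lemmas~\ref{4-cycle-two-4-vertices}, \ref{no-adjacent-4-faces}, and \ref{no-2-vertex-5-face}. I would first use planarity together with Lemmas~\ref{lem:no-sep-4-cycle}--\ref{lem:no-sep-6-cycle} to identify a suitable pair $u_i,u_j$ of distinct, non-adjacent external vertices: any coincidence or adjacency among such a pair would create a short separating cycle, contradicting the appropriate separating-cycle lemma. I then form $G' = (G - \{v_1,\dots,v_5\}) + u_iu_j$; by the minimality of $G$, $G'$ admits a good strong edge-coloring $\phi$. Let $c_1 := c_\phi(u_iu_j)$. Transferring $\phi$ back to $G$ leaves the cycle edges $e_k = v_kv_{k+1}$ and the pendants $f_k = v_ku_k$ (plus $v_1u_1'$ in Case~(ii)) uncolored; since $u_iu_j$ contributes $c_1$ to both $c_\phi(u_i)$ and $c_\phi(u_j)$ in $\phi$, I may re-use $c_1$ on the pendant $f_i$ (or $f_j$) without conflict, pre-coloring one pendant.

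With one pendant pre-colored, I would bound $|A_\phi(e)|$ for each remaining uncolored edge using the sees-relation: the five cycle edges pairwise lie on paths of length at most 3 and so require distinct colors; each $e_k$ also sees the four pendants $f_{k-1}, f_k, f_{k+1}, f_{k+2}$ (but not $f_{k-2}$), while each $f_k$ sees the pendants $f_{k-1}, f_{k+1}$ and the cycle edges $e_{k-2}, e_{k-1}, e_k, e_{k+1}$. A case analysis branching on (a) whether the colors at the external neighbors coincide with $c_1$ or with one another, and (b) which intersections $A_\phi(f_k) \cap A_\phi(f_\ell)$ are empty, yields bounds compatible with Hall's theorem, extending $\phi$ to a good coloring of $G$ and contradicting the minimality of $G$. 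The main obstacle is that in the worst sub-cases of Case~(i) the ten uncolored edges are tightly coupled and direct Hall counting may leave some $|A_\phi(e_k)|$ too small; there I plan to invoke Theorem~\ref{nullstellens} via the polynomial reduction algorithm of Section~2 applied to the polynomial $\prod_{e<e'}(x_e - x_{e'})/h(\mathbf{x})$ (product over pairs of uncolored edges that see each other) to certify the existence of a valid extension.
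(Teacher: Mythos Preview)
Your plan diverges from the paper's proof and carries a real risk in Case~(i). The paper does not use the edge-addition trick at all for this lemma: it simply colors $G-F$ by minimality and applies Combinatorial Nullstellensatz directly to the ten (Case~(i)) or eleven (Case~(ii)) uncolored edges, exhibiting in each case a single monomial with nonzero coefficient whose exponent vector lies below the list sizes $|A_\phi(e_i)|$. Because Lemmas~\ref{no-3-cycles}, \ref{4-cycle-two-4-vertices}, \ref{lem:no-sep-4-cycle} and \ref{lem:no-sep-5-cycle} already rule out every coincidence $u_i=u_j$ and every unexpected adjacency $u_iu_j$, there is exactly one polynomial to reduce per case and no branching on color coincidences or on intersections $A_\phi(f_k)\cap A_\phi(f_\ell)$ is needed.

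Your step $G'=(G-\{v_1,\dots,v_5\})+u_iu_j$ can fail in Case~(i). Since every $v_i$ is a $3$-vertex, each external neighbor $u_i$ may have degree $4$; deleting $F$ and then adding $u_iu_j$ leaves $d_{G'}(u_i)=d_G(u_i)$ and $d_{G'}(u_j)=d_G(u_j)$. If all five $u_i$ happen to be $4$-vertices---nothing proved so far excludes this---then $d_{G'}(u_i)+d_{G'}(u_j)=8$ for every choice of pair, so $\theta(G')>7$ and minimality does not apply to $G'$. Your fallback to Theorem~\ref{nullstellens} is the right instinct, but as you have framed it, it extends a coloring $\phi$ obtained from $G'$; in this bad subcase there is no valid $G'$ and hence no $\phi$ to extend. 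The fix is to drop the edge-addition altogether and apply the Nullstellensatz directly to a coloring of $G-F$, which is precisely the paper's argument and which also eliminates the Hall-type case analysis you were anticipating.
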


\begin{proof}
Let $F = v_1v_2v_3v_4v_5v_1$ be a $5$-face that has less than two $4$-vertices. Either $F$ has only $3$-vertices, or $F$ contains exactly one $4$-vertex. Label the edges as $e_1 = v_1v_2, e_2 = v_2v_3, e_3 = v_3v_4, e_4 = v_4v_5, e_5 = v_5v_1$.
                
\vspace{0.25cm}
\noindent\textbf{Case 1:} $F$ has only $3$-vertices.
We let $u_1, \dots, u_5$ be a set of vertices such that $v_i$ is adjacent to $u_i$. We never have $v_i = u_i$, $u_i = u_j$, or an edge $u_iu_j$ when $u_i$, $u_j$ would not otherwise see each other, for otherwise $G$ contains a triangle, a $4$-cycle without two $4$-vertices, or a separating $5$-cycle, contradicting either Lemma \ref{no-3-cycles}, Lemma \ref{4-cycle-two-4-vertices}, or Lemma \ref{lem:no-sep-5-cycle}. We label the additional edges of the configuration as $e_6 = v_1u_1, e_7 = v_2u_2, e_8 = v_3u_3, e_9 = v_4u_4, e_{10} = v_5u_5$. Associate to each edge $e_i$ the variable $x_i$. Set $$h(\textbf{x}) = (x_1-x_9)(x_2-x_{10})(x_3-x_6)(x_4-x_7)(x_5-x_8)(x_6-x_8)(x_6-x_9)(x_6-x_9)(x_7-x_9)(x_7-x_{10})(x_8-x_{10}).$$ Define the polynomial
            
\begin{equation*}
f(\textbf{x}) = \frac{\prod_{1 \le i < j \le 10} (x_i - x_j)}{h(\textbf{x})}.
\end{equation*}
            
The reduced polynomial is nonzero, and it contains $x_1^6x_2^6x_3^6x_4^5x_5^4x_6^3x_7x_8x_{10}^3$ with a coefficient of $-2$. Since upon coloring $G-F$ in $\phi$, we have $|A_\phi(e_i)| \ge 7$ for each $1 \le i \le 5$, and $|A_\phi(e_i)| \ge 4$ for each $6 \le i \le 10$, then we may give $G$ a good coloring by Combinatorial Nullstellens. Note that since we never have $u_i = u_j$, $u_i = v_j$, or an edge $u_iu_j$ when $u_i$, $u_j$ would otherwise see each other, this is the only polynomial we need to reduce.
             
 \vspace{0.25cm}
\noindent\textbf{Case 2:} $F$ has one $4$-vertex.
Without loss of generality we may suppose that $v_1$ is the $4$-vertex. Let $v_1$ be adjacent to $u_1, u_2$. Let $v_2, \dots, v_5$ be adjacent to $u_3, \dots, u_6$ respectively. Once again, we never have $u_i = u_j$, $u_i = v_j$, or an edge $u_iu_j$ when $u_i$, $u_j$ would not otherwise see each other, for this would violate Lemma \ref{no-3-cycles}, Lemma \ref{lem:no-sep-4-cycle}, or Lemma \ref{lem:no-sep-5-cycle}. Label the additional edges of the configuration as $e_6 = v_1u_1, e_7 = v_1u_2, e_8 = v_2u_3, e_9 = v_3u_4, e_{10} = v_4u_5, e_{11} = v_5u_6$. Associate to each $e_i$ the variable $x_i$. Then, set $h(\x) = (x_1-x_{10})(x_2-x_{11})(x_3-x_6)(x_3-x_7)(x_4-x_8)(x_5-x_9)(x_6-x_9)(x_6-x_{10})(x_7-x_9)(x_7-x_{10})(x_8-x_{10})(x_8-x_{11})(x_9-x_{11}).$ Define the polynomial 

\begin{equation*}
f(\x) = \frac{\prod_{1 \le i < j \le 11} (x_i - x_j)}{h(\textbf{x})}.
\end{equation*}
            \noindent

The reduced polynomial is nonzero, and it contains this term $x_1^5x_2^6x_3^6x_4^6x_5^5x_6^2x_7x_8^3x_9^2x_{10}^3x_{11}^3$ with a coefficient of $1$. After coloring $G - F$, we have $|A_\phi(e_i)| \ge 6$ for $1 \le i \le 2$,  $|A_\phi(e_i)| \ge 7$ for $3 \le i \le 5$, $|A_\phi(e_i)| \ge 3$ for $6 \le i \le 7$, and $|A_\phi(e_i)| \ge 4$ for $8 \le i \le 11$. Therefore, by Combinatorial Nullstellens, we may provide a good coloring for all of $G$. Note that since we never have $u_i = u_j$, $u_i = v_j$, or an edge $u_iu_j$ when $u_i$, $u_j$ would otherwise see each other, this is the only case we need to reduce.
\end{proof}

\begin{lm}\label{4-vertex-two-2-vertices-4-face}
    Let $F$ be a $4$-face with $4$-vertices $u, v$. Then, neither of $u$ or $v$ are adjacent to two $2$-vertices.
\end{lm}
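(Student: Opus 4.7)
I argue by contradiction. Let $F = v_1v_2v_3v_4v_1$ be a $4$-face and, without loss of generality, suppose $u = v_1$ is a $4$-vertex adjacent to two $2$-vertices $x_1, x_2 \notin \{v_2, v_4\}$. By Lemma~\ref{2-vertex-adjacent-4-vertex} each $x_i$ has a second neighbor $w_i$ which is a $4$-vertex, and by Lemma~\ref{4-cycle-two-4-vertices} the second $4$-vertex of $F$ is either $v_3$ (the opposite subcase) or one of $v_2, v_4$ (the adjacent subcase). The first step is to rule out degenerate coincidences using the earlier structural lemmas: $w_1 = w_2$ would make $v_1x_1w_1x_2v_1$ a $4$-cycle containing two $2$-vertices, contradicting Lemma~\ref{4-cycle-two-4-vertices}; any $w_i \in \{v_2, v_3, v_4\}$ produces a $3$-cycle or a $4$-cycle through the $2$-vertex $x_i$, contradicting Lemma~\ref{no-3-cycles} or Lemma~\ref{no-2-vertex-4-cycle}; and the edge $w_1w_2$ cannot exist, because otherwise $v_1x_1w_1w_2x_2v_1$ is a $5$-cycle that is not a face by Lemma~\ref{no-2-vertex-5-face}, and is therefore separating, contradicting Lemma~\ref{lem:no-sep-5-cycle}.

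Next, I apply minimality to $G' = G - \{x_1, x_2\}$ to obtain a strong $13$-edge-coloring $\phi$, which I aim to extend by coloring the four edges $e_1 = v_1x_1$, $e_2 = v_1x_2$, $e_3 = x_1w_1$, $e_4 = x_2w_2$. A direct count of the edges each $e_i$ sees, using the Ore-degree bound on the $3$-vertex neighbors of $v_1$ and the fact that all of $v_1x_1, v_1x_2, x_1w_1, x_2w_2$ are uncolored, gives $|A_\phi(e_1)|, |A_\phi(e_2)| \ge 3$ and $|A_\phi(e_3)|, |A_\phi(e_4)| \ge 2$ in the worst case (the adjacent subcase); in the opposite subcase these improve to $4, 4, 2, 2$. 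Under the structural clean-up above, the conflict graph on $\{e_1, e_2, e_3, e_4\}$ is precisely $K_4$ with the pair $\{e_3, e_4\}$ removed.

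To finish, I invoke Combinatorial Nullstellensatz on
\[
f(y_1, y_2, y_3, y_4) \;=\; (y_1-y_2)(y_1-y_3)(y_1-y_4)(y_2-y_3)(y_2-y_4),
\]
a polynomial of degree $5$. Grouping $(y_1-y_3)(y_1-y_4)$ with $(y_2-y_3)(y_2-y_4)$ and multiplying by $(y_1-y_2)$, a brief expansion shows that the coefficient of $y_1^{2}y_2 y_3 y_4$ equals $+1$. Since $|A_\phi(e_1)| \ge 3 > 2$ and $|A_\phi(e_2)|, |A_\phi(e_3)|, |A_\phi(e_4)| \ge 2 > 1$, Theorem~\ref{nullstellens} produces a color assignment extending $\phi$ to all of $G$, contradicting minimality. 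The main obstacle is the adjacent-$4$-vertex subcase: the bounds on $|A_\phi(e_1)|, |A_\phi(e_2)|$ only barely exceed $2$, so a naive greedy or Hall's-theorem argument is too tight (if $e_3, e_4$ pick colors both lying in $A_\phi(e_1) \cap A_\phi(e_2)$, only a single residual color remains at each of $e_1, e_2$, and their intersection may be that one color). Combinatorial Nullstellensatz handles this uniformly.
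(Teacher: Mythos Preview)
Your argument is correct, but it takes a different route from the paper and carries some unnecessary baggage.

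\textbf{Comparison with the paper.} The paper deletes all four vertices of $F$, obtains a strong $13$-coloring of $G - V(F)$ by minimality, and then extends across the ten pendant edges using Hall's Theorem after an initial greedy step on the two edges at $v_3$. You instead delete only the two $2$-vertices $x_1, x_2$, leaving just four edges to extend. This is a genuinely more economical reduction: the availability counts are easier to verify, and the structural clean-up (ruling out $w_1 = w_2$, $w_i \in V(F)$, and $w_1w_2 \in E(G)$) via Lemmas~\ref{no-3-cycles}, \ref{no-2-vertex-4-cycle}, \ref{no-2-vertex-5-face}, and \ref{lem:no-sep-5-cycle} is sound.

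\textbf{A vacuous case and an overkill tool.} Your ``adjacent subcase'' does not occur: since $\theta(G) \le 7$, two $4$-vertices cannot be adjacent, so by Lemma~\ref{4-cycle-two-4-vertices} the two $4$-vertices of $F$ are necessarily opposite. Hence the only relevant bounds are $|A_\phi(e_1)|, |A_\phi(e_2)| \ge 4$ and $|A_\phi(e_3)|, |A_\phi(e_4)| \ge 2$. With these, Combinatorial Nullstellensatz is unnecessary: since $e_3$ and $e_4$ do not see each other, colour them arbitrarily; each of $e_1, e_2$ then retains at least $4 - 2 = 2$ available colours, and since $e_1, e_2$ must only differ from each other among the remaining uncoloured edges, a greedy choice finishes. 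The Nullstellensatz computation you give (coefficient $+1$ on $y_1^2 y_2 y_3 y_4$) is correct, but it was introduced solely to handle the phantom $3,3,2,2$ case. The paper's use of Hall's Theorem on the larger edge set is heavier in bookkeeping but avoids the planarity-based exclusion of $w_1w_2$ that your argument needs.
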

    \begin{proof}
        Set $F = v_1v_2v_3v_4v_1$. By the Ore-degree condition, we may suppose that $v_1, v_3$ are $4$-vertices. By Lemma \ref{no-2-vertex-4-cycle}, there is no $2$-vertices contained in $F$. So, suppose that $v_1$ is adjacent to two $2$-vertices not on $F$. Call these vertices $u_1, u_2$, and let $v_2$ be adjacent to $u_3$, let $v_3$ be adjacent to $u_4, u_5$, and let $v_4$ be adjacent to $u_6$. Denote the edges by $e_1 = v_1v_2, e_2 = v_2v_3, e_3 = v_3v_4, e_4 = v_4v_1, e_5 = v_1u_1, e_6 = v_1u_2, e_7 = v_2u_3, e_8 = v_3u_4, e_9 = v_3u_5, e_{10} = v_4u_6$. Color $G-F$ in $\phi$, and extend $\phi$ to $G$, leaving $e_1, \dots, e_{10}$ uncolored. Then, $|A_\phi(e_8)|, |A_\phi(e_9)| \ge 3$, $|A_\phi(e_7)|,|A_\phi(e_{10})| \ge 4$, $|A_\phi(e_2)|, |A_\phi(e_3)| \ge 6$, and $|A_\phi(e_1)|, |A_\phi(e_4)|, |A_\phi(e_5)|, |A_\phi(e_6)| \ge 8$. Note that either $e_5, e_6$ do not see $e_8, e_9$, or we have undercounted their number of available colors. Therefore, after coloring $e_8, e_9$, we still have $|A_\phi(e_5)|, |A_\phi(e_6)| \ge 8$, and $|A_\phi(e_7)|,|A_\phi(e_{10})| \ge 2$, $|A_\phi(e_2)|, |A_\phi(e_3)| \ge 4$, and $|A_\phi(e_1)|, |A_\phi(e_4)| \ge 6$, so by Hall's Theorem we may extend $\phi$ to a good coloring of $G$.
    \end{proof}

\begin{lm}\label{4-vertex-two-2-vertices-5-face}
    Let $F$ be a $5$-face with two $4$-vertices $u, v$. Then, neither of $u$ or $v$ are adjacent to two $2$-vertices.
\end{lm}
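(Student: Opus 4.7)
Suppose for contradiction that $F=v_1v_2v_3v_4v_5v_1$ is a $5$-face of $G$ and that the $4$-vertex $v_1$ is adjacent to two $2$-vertices $u_1,u_2$. By Lemma~\ref{no-2-vertex-5-face} no vertex of $F$ has degree $2$, and by Lemma~\ref{5-face-two-4-vertices} some other vertex of $F$ is also a $4$-vertex. By the symmetry of the $5$-cycle, we may assume this second $4$-vertex is either $v_2$ (Case~1) or $v_3$ (Case~2); in both cases the remaining vertices of $F$ are $3$-vertices. Label the off-face neighbors of the vertices of $F$ as $u_3,u_4,\ldots$, label the five face edges as $e_1=v_1v_2,\ldots,e_5=v_5v_1$, and label the seven pendant edges leaving $F$ as $e_6,\ldots,e_{12}$ in the natural way.

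Exactly as in Lemma~\ref{5-face-two-4-vertices}, the off-face neighbors $u_i$ must be pairwise distinct and must not coincide with any $v_j$, nor may there be an extra edge $u_iu_j$ beyond those already drawn; otherwise we would obtain a triangle, a $4$-cycle with fewer than two $4$-vertices, or a separating $4$-, $5$-, or $6$-cycle, contradicting Lemmas~\ref{no-3-cycles}, \ref{4-cycle-two-4-vertices}, \ref{lem:no-sep-4-cycle}, \ref{lem:no-sep-5-cycle}, or \ref{lem:no-sep-6-cycle}. In particular, $v_1v_3\notin E(G)$ in Case~2 by Lemma~\ref{no-3-cycles}, so $v_1$ and $v_3$ are at distance exactly two on $F$. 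Consequently the seeing relation on $\{e_1,\ldots,e_{12}\}$ is determined by the case, and the pendants at $v_1$ do \emph{not} see the pendants at the second $4$-vertex in Case~2, whereas in Case~1 every pendant at $v_1$ does see every pendant at $v_2$ via a length-$3$ path through the edge $v_1v_2$.

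Apply a good coloring $\phi$ to $G-F$ by minimality and extend $\phi$ to $G$ leaving $e_1,\ldots,e_{12}$ uncolored. Using $\theta(G)\le 7$ and $\Delta(G)\le 4$ (Lemma~\ref{no-1-vertex}), one obtains explicit lower bounds $d_1,\ldots,d_{12}$ on $|A_\phi(e_i)|$; the smallest bounds (about $3$) occur at the two pendants incident to the second $4$-vertex, while the pendants at $v_1$, going into $2$-vertices, enjoy availabilities of at least $8$. Associate to each $e_i$ a variable $x_i$ and set
\begin{equation*}
f(\x)=\frac{\prod_{1\le i<j\le 12}(x_i-x_j)}{h(\x)},
\end{equation*}
where $h(\x)$ is the product of $(x_i-x_j)$ over precisely those pairs $\{e_i,e_j\}$ that do not see each other. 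A tuple $(c_1,\ldots,c_{12})$ extends $\phi$ to a good coloring of $G$ exactly when $f(c_1,\ldots,c_{12})\neq 0$, so by Combinatorial Nullstellensatz (Theorem~\ref{nullstellens}) it suffices to find a monomial $\prod x_i^{k_i}$ of $f$ with nonzero coefficient and $k_i<d_i$ for every $i$.

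The polynomial $f(\x)$ has roughly fifty linear factors in twelve variables, whose naive expansion is computationally infeasible; this is the principal obstacle. We overcome it using the greedy polynomial reduction algorithm of Section~2, implemented in Singular, which produces the required nonzero monomial in each case. In Case~2 the computation can be simplified by first coloring the two pendants at $v_3$ (which have the fewest available colors), since they do not see the pendants at $v_1$; this is directly analogous to the trick used in Lemma~\ref{4-vertex-two-2-vertices-4-face}, and the algorithm is then applied to the resulting system in ten variables. The coloring so obtained extends $\phi$ to a good coloring of $G$, contradicting the minimality of $G$ and completing the proof.
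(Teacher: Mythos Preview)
Your Case~1 is vacuous: if $v_1$ and $v_2$ were both $4$-vertices, then $d(v_1)+d(v_2)=8>7$, violating the Ore-degree hypothesis. So the two $4$-vertices on a $5$-face are automatically non-adjacent, and only your Case~2 survives. This is a minor slip, but it signals that you did not exploit the Ore-degree condition as sharply as you could.

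More substantively, your route is heavier than what the lemma actually needs. The paper does not use Combinatorial Nullstellensatz here at all; it argues directly with Hall's Theorem. With the second $4$-vertex at distance two from $v_1$ (say $v_2,v_5$ are the $4$-vertices and $v_2$ has the two $2$-neighbors), one simply colors $G-F$, records the availability lower bounds, greedily colors the four pendants $e_6,e_{10},e_{11},e_{12}$ by Hall, and then observes that the remaining eight edges have availabilities at least $2,2,3,4,5,6,8,8$, which is again an immediate Hall/greedy step. No polynomial, no computer, no need to pin down the seeing relation exactly (so the whole paragraph ruling out coincidences among the $u_i$ becomes unnecessary for this lemma). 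Your own aside that ``in Case~2 the computation can be simplified by first coloring the two pendants at $v_3$'' is in fact the \emph{entire} mechanism of the paper's proof, pushed just one step further.

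Finally, as written your proof has a genuine gap: you assert that the Singular computation ``produces the required nonzero monomial in each case'' without exhibiting either the monomial or its coefficient. Every place the paper invokes Nullstellensatz it records the specific monomial and its nonzero coefficient, precisely because that datum \emph{is} the proof. Without it, your argument is a plausibility sketch rather than a verification.
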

    \begin{proof}
        Let $F = v_1v_2v_3v_4v_5$ be this $5$-face. By the Ore-degree condition on $G$, we may assume that $v_2, v_5$ are both $4$-vertices. Let $v_1$ be adjacent to $u_1$, let $v_2$ be adjacent to $u_2, u_3$, let $v_3$ be adjacent to $u_4$, let $v_4$ be adjacent to $u_5$, and let $v_5$ be adjacent to $u_6, u_7$. Suppose that $v_2$ is adjacent to two $2$-vertices. By Lemma \ref{no-2-vertex-5-face}, there is no $2$-vertex on $F$, so $u_2, u_3$ must be these $2$-vertices. Then, $|A_\phi(e_{11})|, |A_\phi(e_{12})| \ge 3$, $|A_\phi(e_6)|, |A_\phi(e_9)|, |A_\phi(e_{10})| \ge 4$, $|A_\phi(e_4)|, |A_\phi(e_5)| \ge 6$, $|A_\phi(e_3)| \ge 7$, $|A_\phi(e_1)|, |A_\phi(e_2)|, |A_\phi(e_7)|, |A_\phi(e_8)| \ge 8$. By Hall's Theorem, we may color $e_6, e_{10}, e_{11}, e_{12}$. Then, $|A_\phi(e_4)|, |A_\phi(e_5)| \ge 2$, $|A_\phi(e_9)| \ge 3$, $|A_\phi(e_3)| \ge 4$, $|A_\phi(e_1)| \ge 5$, $|A_\phi(e_2)| \ge 6$, and $|A_\phi(e_7)|, |A_\phi(e_8)| \ge 8$, so by Hall's Theorem, we may extend $\phi$ to color all of $G$.
    \end{proof}

\begin{lm}\label{no-4-vertex-in-two-4-faces-one-5-face}
There is no $4$-vertex contained in two $4$-faces and a $5$-face.
\end{lm}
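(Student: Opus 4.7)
The plan is to suppose, for contradiction, that some $4$-vertex $v$ lies on two $4$-faces $F_1,F_2$ and a $5$-face $F_3$. By Lemma~\ref{no-adjacent-4-faces} the two $4$-faces at $v$ cannot share an edge, so in the cyclic order of faces around $v$ they are non-adjacent. Label the neighbors of $v$ as $v_1,v_2,v_3,v_4$ in cyclic order so that $F_1=vv_1w_1v_2$, $F_2=vv_3w_2v_4$, and $F_3=vv_2abv_3$. Since $\theta(G)\le 7$ and $v$ is a $4$-vertex, each $v_i$ is a $3^-$-vertex; Lemmas~\ref{no-2-vertex-4-cycle} and~\ref{no-2-vertex-5-face} rule out $2$-vertices on any face containing $v$, so each $v_i$ is a $3$-vertex. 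Applying Lemma~\ref{4-cycle-two-4-vertices} to $F_1$ and $F_2$ then forces $w_1$ and $w_2$ to be $4$-vertices, and applying Lemma~\ref{5-face-two-4-vertices} to $F_3$ forces one of $a,b$ to be a $4$-vertex.

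The next step is to exclude unwanted coincidences: identifications such as $w_1=w_2$, $w_1\in\{a,b\}$, $v_i=v_j$ for $i\ne j$, or extra chords among $\{w_1,w_2,a,b\}$ would produce a $3$-cycle (forbidden by Lemma~\ref{no-3-cycles}), a short separating cycle (forbidden by Lemmas~\ref{lem:no-sep-4-cycle}, \ref{lem:no-sep-5-cycle}, \ref{lem:no-sep-6-cycle}), or an Ore-degree violation. I would also use Lemma~\ref{4-vertex-two-2-vertices-4-face} (resp. Lemma~\ref{4-vertex-two-2-vertices-5-face}) on $w_1,w_2$ (resp. on the $4$-vertex of $F_3$) to control how many of their other neighbors can be $2$-vertices, which is what makes the palette bookkeeping below go through.

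The main argument follows the template of Lemmas~\ref{4-cycle-two-4-vertices} and~\ref{no-adjacent-4-faces}: build a smaller graph $G'$ from $G$ by removing a well-chosen edge or vertex in the configuration and, if useful, adding a single auxiliary edge between two non-adjacent vertices of the configuration (for instance, an edge between the outside neighbors of $v_1$ and $v_4$) to force a color constraint transferred from $G'$. By minimality $G'$ has a strong $13$-edge-coloring $\phi$; pull $\phi$ back to $G$, leaving only the edges of the configuration incident to $v$ (and the two $4$-faces) uncolored, and give each uncolored edge $e$ a lower bound on $|A_\phi(e)|$ using that the $v_i$ are $3$-vertices and that the $w_i$ and the $4$-vertex of $F_3$ have all of their remaining edges already colored.

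The main obstacle is handling the cases where the palette bounds do not yield a system of distinct representatives directly from Hall's Theorem, because colors repeat among the neighborhoods of $w_1,w_2$, or of the $4$-vertex of $F_3$. In the easy subcases I expect to finish by Hall's Theorem as in Lemma~\ref{no-2-vertex-5-face}. In the tight subcases I would invoke Combinatorial Nullstellensatz on a polynomial $f(\mathbf{x})=\prod_{i<j}(x_i-x_j)/h(\mathbf{x})$ modeled on the one in Lemma~\ref{5-face-two-4-vertices}, where $h$ accounts for the pairs of uncolored edges that do not see each other, and then apply the reduction algorithm of Section~2 to verify that the required monomial has a nonzero coefficient, producing the desired strong $13$-edge-coloring of $G$ and the contradiction.
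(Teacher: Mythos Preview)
Your setup is correct: the two $4$-faces at the central $4$-vertex are separated by Lemma~\ref{no-adjacent-4-faces}, the four neighbors of $v$ are forced to be $3$-vertices, and the opposite vertices $w_1,w_2$ on the $4$-faces are forced to be $4$-vertices. Your plan to rule out coincidences via Lemmas~\ref{no-3-cycles}--\ref{lem:no-sep-6-cycle} is also on target.

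The gap is in the reduction step. You propose to delete ``a well-chosen edge or vertex'' and add ``a single auxiliary edge''; this is far too modest for this configuration. The paper deletes \emph{seven} vertices (the entire $5$-face together with the two $3$-vertices $u_2,w_2$ on the $4$-faces) and adds \emph{two} auxiliary edges, both incident to the same $4$-vertex $u_1$ (namely $u_1w_1$ and an edge from $u_1$ into the $5$-face). This double attachment is the crux: it manufactures two transferable colors $c_1,c_2$ simultaneously, and because both new edges land on $u_1$ it also yields the disjointness $c(u_1)\cap c(w_1)=\emptyset$ in $G'$, which supplies two further usable colors $c_3,c_4\in c(u_1)$ later in the argument. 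A single auxiliary edge gives only one such constraint, and the resulting palette bounds on the $14$ uncolored edges are not enough to close out with Hall's Theorem alone.

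Two smaller points. First, the paper's proof does not use Combinatorial Nullstellensatz here at all; the finish is a short direct case split on whether $c_1,c_2$ can be placed on $e_8$ and on $e_{10}$ versus $e_{12}$, followed by an attempt to push $c_3$ or $c_4$ onto $e_1$. Your CN fallback might in principle be made to work, but you would need a polynomial on roughly $14$ variables with several degree caps as low as $1$ or $2$, and you give no indication of which monomial would survive. Second, Lemmas~\ref{4-vertex-two-2-vertices-4-face} and~\ref{4-vertex-two-2-vertices-5-face} play no role in the paper's argument for this lemma; invoking them does not help the palette counts you actually need.
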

\begin{proof}
Let $F$ be a $5$-face $v_1v_2v_3v_4v_5v_1$. By Lemma \ref{5-face-two-4-vertices}, $F$ must contain two $4$-vertices and, by Lemma \ref{no-2-vertex-5-face}, all vertices on $F$ are $3^+$-vertices, so we arbitrarily set $v_1, v_3$ to be $4$-vertices, with the remaining vertices $3$-vertices. Suppose as well that $v_1$ is contained in two $4$-faces. By Lemma \ref{no-adjacent-4-faces}, these $4$-faces do not share an edge, so one $4$-face must contain $v_2$, and the other must contain $v_5$. So, set $v_1v_2w_1w_2v_1$ and $v_1v_5u_1u_2v_1$ to be the two $4$-faces. By Lemmas \ref{4-cycle-two-4-vertices} and \ref{no-2-vertex-4-cycle}, we must have $w_1, u_1$ as $4$-vertices and $w_2, u_2$ as $3$-vertices. Label the edges as $e_1 = v_1v_2, e_2 = v_2v_3, e_3 = v_3v_4, e_4 = v_4v_5, e_5 = v_5v_1, e_6 = v_1u_2, e_7 = v_1w_2, e_8 = v_5u_1, e_9 = v_2w_1, e_{10} = u_1u_2, e_{11} = w_1w_2$, and finally set $e_{12}$ to be the edge incident to $u_2$ but not $u_1$ or $v_1$, set $e_{13}$ to be the edge incident to $w_2$ but not $w_1, v_1$, and set $e_{14}$ to be the edge incident to $v_4$ but not $v_3, v_5$ (see Figure~\ref{fig315}).

\begin{figure}[h]
\begin{tikzpicture}[scale=1.5, every node/.style={font=\small}]

\tikzstyle{vertex}=[circle, draw, inner sep=1pt, minimum size=6pt]
\tikzstyle{4vertex}=[diamond, draw, inner sep=1pt, minimum size=6pt]

\node[4vertex] (v1) at (3,2) {$v_1$};
\node[vertex] (v2) at (3,3) {$v_2$};
\node[4vertex] (v3) at (2.5,3.5) {$v_3$};
\node[vertex] (v4) at (2,3) {$v_4$};
\node[vertex] (v5) at (2,2) {$v_5$};
\node[4vertex] (u1) at (2,1) {$u_1$};
\node[vertex] (u2) at (3,1) {$u_2$};
\node[4vertex] (w1) at (4,3) {$w_1$};
\node[vertex] (w2) at (4,2) {$w_2$};

\node[vertex] (x1) at (1,3) {};
%\node[vertex] (x2) at (1.5,1) {};
\node[vertex] (x3) at (5,2) {};
\node[vertex] (x4) at (4,1) {};

\draw (v1) -- (v2)  node[midway, right]{$e_1$};
\draw (v2) -- (v3) node[midway, right] {$e_2$};
\draw (v3) -- (v4) node[midway, left] {$e_3$};
\draw (v4) -- (v5) node[midway, right] {$e_4$};
\draw (v5) -- (v1) node[midway, above] {$e_5$};
\draw (v1) -- (u2) node[midway, right] {$e_6$};
\draw (v1) -- (w2) node[midway, above] {$e_7$};
\draw (v5) -- (u1) node[midway, right] {$e_8$};
\draw (v2) -- (w1) node[midway, above] {$e_9$};
\draw (u1) -- (u2) node[midway, above] {$e_{10}$};
\draw (w1) -- (w2) node[midway, right] {$e_{11}$};
\draw (u2) -- (x4) node[midway, above] {$e_{12}$};
\draw (w2) -- (x3) node[midway, above] {$e_{13}$};
\draw (x1) -- (v4) node[midway, above] {$e_{14}$};
\end{tikzpicture}
\caption{Figure for Lemma~\ref{no-4-vertex-in-two-4-faces-one-5-face}}
\label{fig315}
\end{figure}
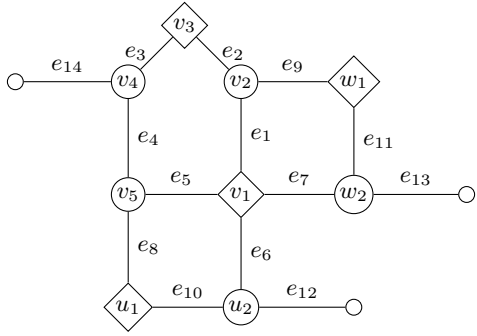

Note that there do not exist edges $v_3u_1$ and $w_1u_1$ due to the Ore-degree condition on $G$. So, construct a new graph $G'$ from $G$ by deleting $v_1, \dots, v_5, u_2, w_2$ from $G$ and adding the edges $u_1w_1$ and $v_3u_1$. We may color $G'$ in $\phi$ with $u_1w_1$ colored in $c_1$ and $v_3u_1$ colored in $c_2$. Let $\{c_3, c_4\} = c(u_1)$, and note that $c(u_1) \cap c(w_1) = \emptyset$. Apply $\phi$ to $G$, leaving the edges $e_1, \dots, e_{14}$ uncolored. We may color $e_9$ in $c_1$ and $e_2$ in $c_2$, for $v_3$ saw $c_2$ in $G'$ and $w_1$ saw $c_1$ in $G'$. We try to color $e_8, e_{10}, e_{12}$ by $c_1, c_2$. Either, we can color $e_8$ and $e_{10}$ by $c_1, c_2$, or $e_{12}$ has both $c_1, c_2$ available to it, in which case we color $e_8$ in one of $c_1, c_2$ and $e_{12}$ in one of $c_1, c_2$.

        So, apply $c_1, c_2$ to $e_8$ and one of $e_{10}$ or $e_{12}$. If $e_{12}$ is colored, then $|A_\phi(e_{13})|, |A_\phi(e_{14})| \ge 1$, $|A_\phi(e_3)|, |A_\phi(e_{10})|, |A_\phi(e_{11})| \ge 2$, $|A_\phi(e_4)| \ge 5$, $|A_\phi(e_6)|, |A_\phi(e_7)| \ge 6$, $|A_\phi(e_1)| \ge 7$, and $|A_\phi(e_5)| \ge 9$, and if $e_{10}$ is colored, then $|A_\phi(e_{12})|, |A_\phi(e_{13})|, |A_\phi(e_{14})| \ge 1$, $|A_\phi(e_3)|, |A_\phi(e_{11})| \ge 2$, $|A_\phi(e_4)| \ge 5$, $|A_\phi(e_7|, |A_\phi(e_7)| \ge 6$, $|A_\phi(e_1)| \ge 7$, and $|A_\phi(e_5)| \ge 9$. In the first case, we may color the following edge sets in order $e_{13}, e_{11}$, then $e_{12}$, then $e_{14}, e_3$, and in the second case, we may color the edge sets in order of $e_{13}, e_{11}$, then $e_{10}$, then $e_{14}, e_3$. Either, the edge sets do not see each other, in which case they may all be colored, or we have undercounted the number of colors available to them, in which case they may still be colored.

        We are left with $|A_\phi(e_6)|, |A_\phi(e_7)|, |A_\phi(e_1)| \ge 2$, and $|A_\phi(e_5)| \ge 3$. Then, if either $e_{12}$ or $e_{10}$, or one of $e_3, e_4, e_{11}, e_{13}$ are colored in one of $c_3, c_4$, we have  $|A_\phi(e_7)|, |A_\phi(e_1)| \ge 2$, $|A_\phi(e_6)| \ge 3$, and $|A_\phi(e_5)| \ge 4$, so we extend $\phi$ to a good coloring of $G$. Otherwise, we may color $e_1$ by either $c_3$ or $c_4$, leaving $|A_\phi(e_6)|, |A_\phi(e_7)| \ge 2$, and $|A_\phi(e_5)| \ge 3$, so once again we may extend $\phi$ to a good coloring of $G$.
    \end{proof}

\begin{lm}\label{no-2-vertex-adjacent-5-face-4-face}
There is no $4$-vertex contained by a $4$-face and $5$-face adjacent to a $2$-vertex.
\end{lm}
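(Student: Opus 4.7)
Let $v$ be the 4-vertex in question, contained in a 4-face $F_4$ and a 5-face $F_5$ and adjacent to a 2-vertex $w$. By Lemma \ref{2-vertex-adjacent-4-vertex} the other neighbor $x$ of $w$ is also a 4-vertex. By Lemmas \ref{no-2-vertex-4-cycle} and \ref{no-2-vertex-5-face}, $w\notin V(F_4)\cup V(F_5)$, so the edge $vw$ lies on neither face; since $\deg(v)=4$, the two faces are forced to share an edge at $v$ (otherwise the two edges of $F_4$ and the two edges of $F_5$ at $v$ would consume all four edges of $v$, leaving no room for $vw$). Label the shared edge $vv'$, write $F_4=vv'v''v'''v$ and $F_5=vv'u_3u_2u_1v$. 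By Lemmas \ref{4-cycle-two-4-vertices} and \ref{5-face-two-4-vertices} and the Ore-degree bound, $v''$ is a 4-vertex, $v',v''',u_1$ are 3-vertices, and one of $u_2,u_3$ is a 4-vertex. By Lemma \ref{no-3-cycles}, $vx\notin E(G)$.

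The plan is to form the auxiliary graph $G'=(G-w)+vx$. This $G'$ is planar (route $vx$ through the region vacated by $w$), satisfies $\theta(G')\le 7$ (the degrees of $v$ and $x$ are unchanged), and has one fewer vertex than $G$, so by the minimality of $G$ it admits a good coloring $\phi$. Let $c^*=c_\phi(vx)$ and transfer $\phi$ to the edges of $E(G)\setminus\{wv,wx\}$. In $G'$, the edge $vx$ sees every edge incident to $v$, to $x$, or to any neighbor of $v$ or $x$; consequently $c^*$ is used by none of these edges, so $c^*\in A_\phi(wv)\cap A_\phi(wx)$.

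Counting the colored edges that $wv$ sees in the partial $\phi$-coloring of $G$ gives at most twelve — three at $v$, three at $x$, and at most six at the three 3-vertex neighbors $v',v''',u_1$ of $v$ — so $|A_\phi(wv)|\ge 1$, and symmetrically $|A_\phi(wx)|\ge 1$. If $|A_\phi(wv)|\ge 2$ (or symmetrically $|A_\phi(wx)|\ge 2$) we color $wv$ with any available color other than $c^*$ and set $wx=c^*$; this extends $\phi$ to a good coloring of $G$ and contradicts minimality.

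The main obstacle is the rigid subcase $A_\phi(wv)=A_\phi(wx)=\{c^*\}$. Using that the colors at $v$ are disjoint from the colors at $x$ and from the six colors at $v$'s neighbors (all three sets are mutually seen in $G'$), a pigeonhole argument in the tight case forces the six colors on the edges at $v$'s neighbors to coincide as a set with the six colors on the edges at $x$'s other neighbors, so $\phi$ uses exactly $12$ of the $13$ colors in the combined local neighborhood. This rigidity is broken by a local color swap: we identify an edge $e$ incident to a neighbor of $v$ whose current color $\gamma\ne c^*$ can be replaced by $c^*$ (this is allowed because $c^*$ is globally absent from the $2$-neighborhood of $v$ and $x$), thereby freeing $\gamma$ for $wv$; we then set $wv=\gamma$ and $wx=c^*$. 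Verifying that such a swap can always be carried out — relying on the separating-cycle restrictions of Lemmas \ref{lem:no-sep-4-cycle}--\ref{lem:no-sep-6-cycle} and the exact local degree pattern established above — is the technical heart of the proof.
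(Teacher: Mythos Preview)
Your reduction graph $G'=(G-w)+vx$ does \emph{not} satisfy $\theta(G')\le 7$. In $G$ both $v$ and $x$ have degree $4$; deleting $w$ drops each to $3$, but adding $vx$ restores each to $4$, so the new edge $vx$ has endpoint degree sum $4+4=8$. Hence $G'$ is not in the class of graphs to which the minimality of $G$ applies, and you cannot obtain the coloring $\phi$ of $G'$ on which the rest of the argument is built. The paper avoids this trap by deleting a much larger piece of $G$ (the entire $4$-face and part of the $5$-face) before adding an auxiliary edge, so that the endpoints of the added edge have lost enough neighbors to keep the Ore-degree at most $7$.

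Even setting the above aside, the final paragraph is not a proof but a proof sketch: the assertion that one can always recolor some edge $e$ at a neighbor of $v$ to $c^*$ is not justified. The fact that $c^*$ avoids all edges incident to $v$, $x$, and their neighbors does \emph{not} imply that $c^*$ is available at such an edge $e$, since $e$ also sees edges incident to the neighbors of its \emph{other} endpoint, which may lie two steps from $v$ and carry colour $c^*$. Pinning down a specific $e$ where this works would require a case analysis at least as heavy as what the paper actually does (two cases on which vertex of the $5$-face is the second $4$-vertex, each using a tailored auxiliary edge and a careful Hall-type count on eight or more uncoloured edges).
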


\begin{proof}
Let the $4$-face be $v_1v_2v_3v_4v_1$, and let the $5$-face be $v_2w_1w_2w_3v_3v_2$. Suppose that $v_1, v_3$ are both $4$-vertices, which is forced by Lemma \ref{4-cycle-two-4-vertices}. Then, let $v_1$ be adjacent to $u_1, u_2$, let $v_4$ be adjacent to $u_3$, and let $v_3$ be adjacent to $u_4$. Since $v_3$ is a $4$-vertex, then by our degree sum condition $w_3$ is not a $4$-vertex, so either of $w_1, w_2$ are $4$-vertices. Say that $w_3$ is adjacent to $u_5$. Let $e_1 = v_2v_3$, $e_2 = v_2v_1$, $e_3 = v_1u_1$, $e_4 = v_1u_2$, $e_5 = v_1v_4$, $e_6 = v_4u_3$, $e_7 = v_4v_3$, $e_8 = v_3u_4$, $e_9 = v_3w_3$, $e_{10} = w_3u_5$, and $e_{11} = w_3w_2$. Moreover, set $e_{16} = w_1v_2$

\noindent\textbf{Case 1:} Suppose that $w_2$ is a $4$-vertex and $w_1$ is a $3$-vertex.

Let $w_2$ be adjacent to $u_6, u_7$, and let $w_1$ be adjacent to $u_8$. Set $e_{12} = w_2u_6$, $e_{13} = w_2u_7$, $e_{14} = w_2w_1$, and $e_{15} = w_1u_8$. Note that there is no edge $u_3w_1$, nor does $u_3 = u_8$ for then we violate either Lemma \ref{lem:no-sep-4-cycle} or Lemma \ref{lem:no-sep-5-cycle}. Therefore, attach the edge $u_3w_1$ to the graph $G - \{v_1, v_2, v_3, v_4\}$, and color the resulting graph in $\phi$. Place the color $c(u_3w_1)$ on $e_6$ and $e_{16}$. Then, we have $|A_\phi(e_3)|, |A_\phi(e_4)| \ge 2$, $|A_\phi(e_9)| \ge 3$, $|A_\phi(e_5)| \ge 5$, $|A_\phi(e_2)|, |A_\phi(e_7)|, |A_\phi(e_8)| \ge 6$, and $|A_\phi(e_1)| \ge 7$. We first color in order $e_3, e_4, e_5$. Note that $e_8$ does not see $e_3, e_4$, for if an edge $u_1u_4, u_2u_4$ exists, or if $u_1 = u_4$ or $u_2 = u_4$, then we violate either Lemma \ref{lem:no-sep-4-cycle} or Lemma \ref{lem:no-sep-5-cycle}, for $u_3$ must be in the interior of the resulting $4$- or $5$-cycle. Therefore, $|A_\phi(e_8)| \ge 5$. By an identical argument with $v_3, v_3$ and $u_1, u_2$, we conclude $|A_\phi(e_9)| \ge 2$. Therefore, $|A_\phi(e_9)| \ge 2$, $|A_\phi(e_2)|, |A_\phi(e_7)| \ge 3$, $|A_\phi(e_1)| \ge 4$, and $|A_\phi(e_8)| \ge 5$. Therefore, we color the remaining set of edges by Hall's Theorem. This gives a complete coloring for the graph.

\noindent\textbf{Case 2:} Suppose that $w_2$ is a $3$-vertex and $w_1$ is a $4$-vertex.
            
Let $w_2$ be adjacent to $u_6$, and let $w_1$ be adjacent to $u_7, u_8$. Set $e_{12} = w_2u_6$, $e_{13} = w_2w_1$, $e_{14} = w_1u_7$, and $e_{15} = w_1u_8$. Following the exact same argument as above, we are able to add the edge $u_4v_1$ to the graph $G - \{v_2, v_3, w_1\}$. Color the resulting graph in $\phi$, and remove the color from $e_5$, for we will perhaps need to recolor it. Place the same color $c(u_4v_1)$ on $e_2, e_{10}$. We then have $|A_\phi(e_{14})|, |A_\phi(e_{15})| \ge 2$ and $|A_\phi(e_{13})| \ge 3$, so we color them in order. In the resulting graph, we get $|A_\phi(e_{16})| \ge 1$, $|A_\phi(e_5)| \ge 2$, $|A_\phi(e_1)|, |A_\phi(e_7)|, |A_\phi(e_9)| \ge 4$, and $|A_\phi(e_8)| \ge 6$. Color $e_{16}$ and $e_5$ in order, leaving $|A_\phi(e_1)|, |A_\phi(e_7)|, |A_\phi(e_9)| \ge 2$, and $|A_\phi(e_8)| \ge 4$. Now, we will attempt to color $e_1$ by the $3$ colors of $c(u_5) - c(u_4v_1)$. If there are not $3$ colors available, then we are good. Otherwise, if we cannot color $e_1$ in one of these three colors, then we must have at least one color $c \in c(u_5)$ in $c(w_1)$ or $c(v_4)$. In both cases, $e_9$ must see two colors twice, and thus has $3$ available colors, for a good coloring of $G$. Finally, if we can color $e_1$ in one of these three colors, then $|A_\phi(e_9)|$ remains unchanged, so we have $|A_\phi(e_7)| \ge 1$, $|A_\phi(e_9)| \ge 2$, and $|A_\phi(e_8)| \ge 3$. Therefore, once again, we get a good coloring of $G$.
 
These are all possible cases, so we have achieved a good coloring.
\end{proof}

\begin{lm}\label{no-4-vertex-in-three-5-face-4-face}
    There is no $4$-vertex in three $5$-faces and one $4$-face.
\end{lm}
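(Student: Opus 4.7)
Let $v$ be a $4$-vertex contained in one $4$-face and three $5$-faces, with cyclic neighbors $v_1,v_2,v_3,v_4$. Without loss of generality, place the $4$-face as $F_0=vv_1xv_2v$ between $v_1$ and $v_2$, and label the three $5$-faces $F_1=vv_2a_1a_2v_3v$, $F_2=vv_3b_1b_2v_4v$, and $F_3=vv_4c_1c_2v_1v$.

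First I would pin down the local degree pattern via the structural lemmas already proved. The Ore-degree condition gives $d(v_i)\le 3$ for every $i$; Lemma~\ref{no-2-vertex-4-cycle} excludes $d(v_1)=2$ and $d(v_2)=2$, and Lemma~\ref{no-2-vertex-5-face} excludes $d(v_3)=2$ and $d(v_4)=2$, so each $v_i$ is a $3$-vertex. Lemma~\ref{4-cycle-two-4-vertices} then forces $x$ to be the second $4$-vertex of $F_0$, and Lemma~\ref{5-face-two-4-vertices} forces each $5$-face $F_j$ to contain a second $4$-vertex, which must lie among its two internal non-$v$ vertices ($a_1$ or $a_2$ for $F_1$, and similarly for $F_2, F_3$).

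Next I would split into the $2^3=8$ subcases determined by which of $a_i$, $b_i$, $c_i$ is the extra $4$-vertex in its face; several collapse by the obvious reflective symmetry of the configuration. In each subcase I would invoke Lemmas~\ref{no-3-cycles}, \ref{lem:no-sep-4-cycle}, \ref{lem:no-sep-5-cycle}, and \ref{lem:no-sep-6-cycle} to rule out vertex identifications and short separating cycles among the named vertices, so that no unanticipated adjacency obstructs the Nullstellensatz computation, and compute for each edge $e_i$ of the configuration the minimum number $d_i$ of colors available to $e_i$ once a minimal coloring $\phi$ of $G$ minus the configuration has been fixed.

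Finally I would apply Theorem~\ref{nullstellens} to the polynomial
\begin{equation*}
f(\x)=\frac{\prod_{1\le i<j\le n}(x_i-x_j)}{h(\x)},
\end{equation*}
where $h(\x)$ removes exactly the pairs of configuration edges that do not see each other, and look for a monomial $x_1^{k_1}\cdots x_n^{k_n}$ of degree $\deg f$ with $k_i<d_i$ for every $i$, which yields a good extension of $\phi$ to $G$. The main obstacle is computational: this configuration contains on the order of twenty edges, so $\prod_{i<j}(x_i-x_j)$ carries well over a hundred factors and the naive expansion is intractable. This is precisely the regime the polynomial reduction algorithm described in Section~2 is built for, and I would invoke it inside Singular to extract the relevant coefficient in each surviving subcase, obtaining the contradiction and thereby the lemma.
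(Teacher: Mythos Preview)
Your plan is essentially the paper's own approach: pin down the local degrees via Lemmas~\ref{no-2-vertex-4-cycle}, \ref{no-2-vertex-5-face}, \ref{4-cycle-two-4-vertices}, \ref{5-face-two-4-vertices}, rule out stray identifications and adjacencies via the separating-cycle lemmas, then split into cases according to which internal vertex of each $5$-face is the required second $4$-vertex and finish each case with Combinatorial Nullstellensatz using the reduction algorithm. The paper carries this out with a slightly leaner configuration (sixteen uncolored edges rather than your rough twenty, since not every pendant edge is needed) and, after exploiting the reflective symmetry you mention, ends up with exactly four surviving subcases; otherwise the argument is the same.
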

    
\begin{proof}
Let $v_1$ be a $4$-vertex in three $5$-faces and one $4$-face. Let the three $5$-faces by denoted $F_1, F_2, F_3$ and denote the $4$-face by $F_4$. Let $F_2$ be the unique $5$-face which does not share an edge with $F_4$. Set $F_2 = v_1v_2v_3v_4v_5v_1$ oriented clockwise so that $v_1$ is in $F_4$. Let $F_4 = v_1w_1w_2w_3v_1$, $F_1 = w_3b_1b_2v_2v_1w_3$, and $F_3 = w_1a_1a_2v_5v_1w_1$. By symmetry, we may suppose that $v_3$ is a $4$-vertex, so that $v_4$ is a $3$-vertex. Moreover, $w_1, w_3$ are $3$-vertices and $w_2$ is a $4$-vertex, by Lemma \ref{4-cycle-two-4-vertices}. Moreover, let $u_1, u_2$ be adjacent to $v_3$ and $u_3$ be adjacent to $v_4$. We refer to this complete configuration as $H$. Now, label the edges of $H$ as follows. Set $e_1 = w_1w_2$, $e_2 = w_2w_3$, $e_3 = a_1w_1$, $e_4=w_3b_1$, $e_5 = w_1v_1$, $e_6 = w_3v_1$, $e_7 = v_1v_5$, $e_8 = v_1v_2$, $e_9 = a_2v_5$, $e_{10} = v_2b_2$, $e_{11} = v_5v_4$, $e_{12} = v_2v_3$, $e_{13} = v_4u_3$, $e_{14} = v_4v_3$, $e_{15} = v_3u_1$, and $e_{16} = v_3u_2$ (see Figure~\ref{fig317}). 

\begin{figure}[h]
\begin{tikzpicture}[scale=1.1, every node/.style={font=\small}]

% Define a style for nodes
\tikzstyle{vertex}=[circle, draw, inner sep=1pt, minimum size=6pt]

%--- Square F4 ---
\node[vertex] (w1) at (4.5,4.5) {$w_1$};
\node[vertex] (w2) at (5,3) {$w_2$};
\node[vertex] (v1) at (3,4.5) {$v_1$};
\node[vertex] (w3) at (3,3.5) {$w_3$};
\node[vertex] (a1) at (5,5.5) {$a_1$};
\node[vertex] (a2) at (4,6.5) {$a_2$};
\node[vertex] (v5) at (3,5.5) {$v_5$};
\node[vertex] (v2) at (1.5,4.5) {$v_2$};

\node[vertex] (v3) at (1,5.5) {$v_3$};
\node[vertex] (v4) at (2,6.5) {$v_4$};
\node[vertex] (u3) at (2,7) {$u_3$};
\node[vertex] (u1) at (0,4.5) {$u_1$};
\node[vertex] (u2) at (0,6.5) {$u_2$};

\node[vertex] (b2) at (1,3.5) {$b_2$};
\node[vertex] (b1) at (2,2.5) {$b_1$};

\draw (w1)--(w2) node[midway,left]{$e_1$};
\draw (w2)--(w3) node[midway,below]{$e_2$};
\draw (w1)--(a1) node[midway,right]{$e_3$};
\draw (w3)--(b1) node[midway,right]{$e_4$};
\draw (w1)--(v1) node[midway,below]{$e_5$};
\draw (v1)--(w3) node[midway,left]{$e_6$};
\draw (v1)--(v5) node[midway,left]{$e_7$};
\draw (v1)--(v2) node[midway,below]{$e_8$};
\draw (a2)--(v5) node[midway,right]{$e_9$};
\draw (b2)--(v2) node[midway,left]{$e_{10}$};
\draw (v4)--(v5) node[midway,above]{$e_{11}$};
\draw (v2)--(v3) node[midway,left]{$e_{12}$};
\draw (v4)--(u3) node[midway,left]{$e_{13}$};
\draw (v3)--(v4) node[midway,above left]{$e_{14}$};
\draw (v3)--(u1) node[midway,left]{$e_{15}$};
\draw (v3)--(u2) node[midway,left]{$e_{16}$};

\draw (a1)--(a2);
\draw (b1)--(b2);

\node at (2,3.5) {$F_1$};
\node at (2,5.5) {$F_2$};
\node at (3.75,5.5) {$F_3$};
\node at (3.75,4) {$F_4$};
\end{tikzpicture}
\caption{Figure for Lemma~\ref{no-4-vertex-in-three-5-face-4-face}.}
\label{fig317}
\end{figure}

We first note that we never have any equality among the distinct vertices $a_i$, $b_i$, $v_i$, $u_i$, or $w_i$, for we are then guaranteed to have either violated the ORE conditions, violated the simplicity of $G$, or acquired a $3$-cycle, a separating $4$-cycle, or a separating $5$-cycle, violating Lemma \ref{no-3-cycles}, Lemma \ref{lem:no-sep-4-cycle}, or Lemma \ref{lem:no-sep-5-cycle}. We also never have an edge $b_i$ to $w_i$, $v_i$, or $u_i$, and likewise no edge $a_i$ to $w_i$, $v_i$, or $u_i$, for then we achieve a separating $6$-cycle, violating Lemma \ref{lem:no-sep-6-cycle}. We now use Combinatorial Nullstellensatz to reduce different possible cases. Since by Lemma \ref{5-face-two-4-vertices} every $5$-face must have two $4$-vertices, then we have four distinct cases based on which of the $a_i$, $b_i$ are $4$-vertices. Since there are no additional possible adjacency conditions between the different vertices of $H$, it is sufficient to only consider these four cases to prove this Lemma using Combinatorial Nullstellensatz.
        
If we correspond to each $e_i$ an indeterminate $x_i$, then the corresponding polynomial is the same for each case, since whether or not a particular $a_i$ or $b_i$ is a $4$-vertex has no impact on the adjacency conditions. Thus, we may reduce the same polynomial over different degree conditions. %Set $h(\textbf{x}) = (x_1-x_9)(x_1-x_{10})(x_1-x_{12})(x_1-x_{13})(x_1-x_{14})(x_1-x_{15})(x_1-x_{16})(x_2-x_9)(x_2-x_{10})(x_2-x_{11})(x_2-x_{12})(x_2-x_{13})(x_2-x_{14})(x_2-x_{15})(x_2-x_{16})(x_3-x_4)(x_3-x_{10})(x_3-x_{11})(x_3-x_{12})(x_3-x_{13})(x_3-x_{14})(x_3-x_{15})(x_3-x_{16})(x_4-x_9)(x_4-x_{11})(x_4-x_{12})(x_4-x_{13})(x_4-x_{14})(x_4-x_{15})(x_4-x_{16})(x_5-x_{13})((x_5-x_{14})(x_5-x_{15})(x_5-x_{16})(x_6-x_{13})(x_6-x_{14})(x_6-x_{15})(x_6-x_{16})(x_7-x_{15})(x_7-x_{16})(x_8-x_{13})(x_9-x_{10})(x_9-x_{12})(x_9-x_{15})(x_9-x_{16})(x_9-x_{11})(x_9-x_{13})$.

Let $h(\textbf{x})\cdot (x_3-x_9)\cdot (x_4-x_{10})$ be 
$$\prod_{j\in \{1,2,3,4\}}(\prod_{i=9}^{16} (x_j-x_i))\cdot \prod_{j\in \{5,6\}}(\prod_{i=13}^{16} (x_j-x_i))\cdot t(\textbf{x}),$$
where $$t(\textbf{x})=(x_3-x_4)\cdot (x_7-x_{15})(x_7-x_{16})(x_8-x_{13})(x_9-x_{10})(x_9-x_{12})(x_9-x_{15})(x_9-x_{16})(x_{10}-x_{11})(x_{10}-x_{13}).$$

This polynomial is
\[ f(\textbf{x}) = \frac{\prod_{1 \le i < j \le 16}(x_i-x_j)}{h(\textbf{x})}\]

Let $\phi$ be a good coloring of $G-H$. If for $i\in [16]$, $|A_{\phi}(e_i)|\ge a_i$, we call that {\em $H$ satisfies $(a_1, \ldots, a_{16})$}. 

\noindent\textbf{Case 1:} Both $a_1, b_1$ are $4$-vertices, and $a_2, b_2$ are $3$-vertices.

In this case, $H$ satisfies $(4,\ 4,\ 3,\ 3,\ 8,\ 8,\ 11,\ 11,\ 6,\ 6,\ 8,\ 7,\ 4,\ 6,\ 3,\ 3). $
% Now, color $G-H$ in $\phi$. We extend $\phi$ to $H$ by using Combinatorial Nullstellens. So, $|A_\phi(e_1)|, |A_\phi(e_2)| \ge 4$, $|A_\phi(e_3)|, |A_\phi(e_4)| \ge 3$, $|A_\phi(e_5)|, |A_\phi(e_6)| \ge 8$, $|A_\phi(e_7)|, |A_\phi(e_8)| \ge 11$, $|A_\phi(e_9)|, |A_\phi(e_{10})| \ge 6$, $|A_\phi(e_{11})| \ge 8$, $|A_\phi(e_{12})| \ge 7$, $|A_\phi(e_{13})| \ge 4$, $|A_\phi(e_{14})| \ge 6$, and $|A_\phi(e_{15})|, |A_\phi(e_{16})| \ge 3$. 
The monomial $$x_1^3x_2^2x_3^2x_4^2x_5^7x_6^6x_7^7x_8^{10}x_9^2x_{10}^5x_{11}^7x_{12}^6x_{13}^3x_{14}^5x_{15}^2x_{16}$$ exists in $f(\textbf{x})$ with the coefficient $-1$. Therefore, this configuration is reducible by Combinatorial Nullstellensatz.
 \bigskip
 
\noindent\textbf{Case 2:} Both $a_1, b_2$ are $4$-vertices, and $a_2, b_1$ are $3$-vertices.

In this case, $H$ satisfies
$(4,\ 5,\ 3,\ 4,\ 8,\ 9,\ 11,\ 11,\ 6,\ 5,\ 8,\ 6,\ 4,\ 6,\ 3,\ 3).$
%Now, color $G-H$ in $\phi$. We extend $\phi$ to $H$ by using Combinatorial Nullstellens. So, $|A_\phi(e_1)| \ge 4$, $|A_\phi(e_2)| \ge 5$, $|A_\phi(e_3)| \ge 3 $, $|A_\phi(e_4)| \ge 4$, $|A_\phi(e_5)| \ge 8$, $|A_\phi(e_6)| \ge 9$, $|A_\phi(e_7)|, |A_\phi(e_8)| \ge 11$, $|A_\phi(e_9)| \ge 6$, $|A_\phi(e_{10})| \ge 5$, $|A_\phi(e_{11})| \ge 8$, $|A_\phi(e_{12})| \ge 6$, $|A_\phi(e_{13})| \ge 4$, $|A_\phi(e_{14})| \ge 6$, and $|A_\phi(e_{15})|, |A_\phi(e_{16})| \ge 3$. 
The monomial $$x_1^3x_2^4x_3^2x_4^3x_5^6x_6^8x_7^{10}x_8^6x_9x_{10}^4x_{11}^7x_{12}^5x_{13}^3x_{14}^5x_{15}^2x_{16}$$ exists in $f(\textbf{x})$ with the coefficient $2$. Therefore, this configuration is reducible by Combinatorial Nullstellensatz.
\bigskip

\noindent\textbf{Case 3:} Both $a_2, b_1$ are $4$-vertices, and $a_1, b_2$ are $3$-vertices.

In this case, $H$ satisfies $(5,\ 4,\ 4,\ 3,\ 9,\ 8,\ 10,\ 8,\ 5,\ 5,\ 7,\ 7,\ 4,\ 6,\ 3,\ 3)$.           
%Now, color $G-H$ in $\phi$. We extend $\phi$ to $H$ by using Combinatorial Nullstellens. So, $|A_\phi(e_1)| \ge 5$, $|A_\phi(e_2)\ \ge 4$, $|A_\phi(e_3)| \ge 4$, $|A_\phi(e_4)| \ge 3$, $|A_\phi(e_5)| \ge 9$, $|A_\phi(e_6)| \ge 8$, $|A_\phi(e_7)| \ge 10$, $|A_\phi(e_8)| \ge 8$, $|A_\phi(e_9)|, |A_\phi(e_{10})| \ge 5$, $|A_\phi(e_{11})|, |A_\phi(e_{12})| \ge 7$, $|A_\phi(e_{13})| \ge 4$, $|A_\phi(e_{14})| \ge 6$, $|A_\phi(e_{15})|, |A_\phi(e_{16})| \ge 3$. 
The monomial $$x_2^3x_3^3x_4^2x_5^8x_6^7x_7^9x_8^{10}x_9^4x_{10}x_{11}^6x_{12}^6x_{13}^3x_{14}^5x_{15}x_{16}^2$$ exists in $f(\textbf{x})$ with the coefficient $-1$. Therefore, this configuration is reducible by Combinatorial Nullstellensatz.
 \bigskip
 
\noindent\textbf{Case 4:} Both $a_2, b_2$ are $4$-vertices, and $a_1, b_1$ are $3$-vertices.

In this case, $H$ satisfies
$(5,\ 5,\ 4,\ 4,\ 9,\ 9,\ 10,\ 10,\ 5,\ 5,\ 7,\ 6,\ 4,\ 6,\ 3,\ 3). $
 %Now, color $G-H$ in $\phi$. We extend $\phi$ to $H$ by using Combinatorial Nullstellens. So, $|A_\phi(e_1)|, |A_\phi(e_2)| \ge 5$, $|A_\phi(e_3)|, |A_\phi(e_4)| \ge 4$, $|A_\phi(e_5)|, |A_\phi(e_6)| \ge 9$, $|A_\phi(e_7)|, |A_\phi(e_8)| \ge 10$, $|A_\phi(e_9)|, |A_\phi(e_{10})| \ge 5$, $|A_\phi(e_{11})| \ge 7$, $|A_\phi(e_{12})| \ge 6$, $|A_\phi(e_{13})| \ge 4$, $|A_\phi(e_{14})| \ge 6$, and $|A_\phi(e_{15})|$, $|A_\phi(e_{16})| \ge 3$. 
The monomial $$x_2^4x_3^3x_4^3x_5^8x_6^8x_7^9x_8^9x_9^4x_{11}^6x_{12}^5x_{13}^3x_{14}^5x_{15}x_{16}^2$$ exists in $f(\textbf{x})$ with the coefficient $-2$. Therefore, this configuration is reducible by Combinatorial Nullstellensatz.

We have reduced all cases.
\end{proof}

\section{discharging}

We are now ready to prove the theorem using a method of discharging.

By Euler's Formula, 
     \begin{equation*}
        \sum_{v\in V(G)}(2d(v)-6) + \sum_{f\in F(G)}(d(f)-6) = -12.
    \end{equation*}

If we assign each vertex $v$ the charge $2d(v)-6$ and each face $f$ the charge $d(f)-6$, we expect the sum of charges to be $-12$. We redistribute charges between faces and vertices to show that the sum is nonnegative, reaching a contradiction. We discharge according to the following rules.

\begin{enumerate}
    \item[R1)] Each $2$-vertex receives $1$ charge from each adjacent $4$-vertex.
    \item[R2)] Each $4$-face receives $1$ charge from each $4$-vertex in the $4$-face.
    \item[R3)] Each $5$-face receives $1/2$ a charge from each $4$-vertex in the $4$-face.
\end{enumerate}

Suppose that $G$ is a graph which is a counterexample to the claim that all planar graphs with an Ore-degree of 7 are $13$ strong edge colorable. We consider the end charges on the vertices and faces after applying rules 1-3. By Lemmas \ref{no-1-vertex}, there are no $1$-vertices, $5$-vertices, or $6$-vertices, so we do not consider them. Each $2$-vertex has a charge of $-2$, and by Lemma \ref{2-vertex-adjacent-4-vertex}, each $2$-vertex is adjacent to two $4$-vertices, so they receive $2$ charges by R1, and thus each $2$-vertex has $0$ charges. All $3$-vertices have $0$ charges. All $5^+$-vertices have nonnegative charges. Every $4$-face begins with $-2$ charges, but contains two $4$-vertices by Lemma \ref{4-cycle-two-4-vertices}, from which it receives one charge each by R2, so the $4$-face has $0$ charges. Every $5$-face contains two $4$-vertices by Lemma \ref{5-face-two-4-vertices}, and it receives $1/2$ a charge from each by R3, so it ends with $0$ charges. All $6^+$-faces have nonnegative charges.

Each $4$-vertex has $2$ charges. Now, if a $4$-vertex is in no $5^-$-faces, then it sees at most two $2$-vertices by Lemma \ref{2-vertex-adjacent-4-vertex}, so it has at least $2 - 1 - 1 = 0$ charges by R1. If a $4$-vertex is in one $4$-face, one $5$-face, or two $5$-faces then it sees at most one $2$-vertex by Lemmas \ref{4-vertex-two-2-vertices-4-face} and \ref{4-vertex-two-2-vertices-5-face}, so it has at least $0$ charges. If a $4$-vertex is in two $4$-faces, then it sees no $2$-vertices by Lemmas \ref{no-adjacent-4-faces} and \ref{no-2-vertex-4-cycle}, so it has $0$ charges. If a $4$-vertex is in a $5$-face and a $4$-face, then it sees no $2$-vertices, so it has $1/2$ a charge. If a $4$-vertex is in three $5^-$-faces, then it sees no $2$-vertices by Lemmas \ref{no-2-vertex-4-cycle} and \ref{no-2-vertex-5-face}. In this case, the $4$-vertex cannot be contained in two $4$-faces by \ref{no-4-vertex-in-two-4-faces-one-5-face}, so it has $0$ charges. If a $4$-vertex is in four $5^-$-faces, then by Lemmas \ref{no-2-vertex-adjacent-5-face-4-face} and \ref{no-4-vertex-in-two-4-faces-one-5-face} it must be in four $5$-faces. In addition, it can see no two vertices by Lemma \ref{no-2-vertex-5-face}, so it has $0$ charges.

Therefore, all vertices and faces end with a nonnegative charge, so the sum $\sum_{v\in V(G)}(2d(v)-6) + \sum_{f\in F(G)}(d(f)-6)$ is nonnegative, which contradicts Euler's Formula, so $G$ does not exist.

Thus, we have proven Theorem \ref{main-theorem}.

%\textcolor{red}{I cannot get citation \cite{andersen} to properly write Andersen's name. I am not sure how to fix it.} \blue{(I think we can just put initials of the first names to simplify this.)}

%\bibliographystyle{plainurl}
%\bibliography{references}

\newpage 

\section{Appendix}

\subsection{Proof of Lemma~\ref{lem:no-sep-5-cycle}: no separating 5-cycles}

\begin{proof}
Let $C$ be a separating $5$-cycle. Once again, we may assume that $\Int(C)$ has at most $3$ interior cycles. By Lemma \ref{lem:no-adjacent-vertex-cut}, we may also assume that there is no edge cut, and thus at least two interior edges of $C$. The structure of this proof is extremely similar to the previous point.

        \vspace{0.25cm}
\noindent\textbf{Case 1:} There are no interior edges of $C$ incident to a $4$-vertex.
        \begin{proof}
First suppose that all interior edges of $C$ are incident to a $3$-vertex. Call these interior edges $e_1, e_2, e_3$. Color $\Ext(C)$ in $\psi$ using minimality. We color $\Int(C) - C$ in $\phi$. In the resulting coloring, $e_1, e_2, e_3$ each have $4$ colors available to them, one edge of $C$ has $6$ colors available to it, and every other edge has at least $9$ colors available to it. Thus, we get a set of distinct representatives using Hall's Theorem. Since $C$ is a $5$-cycle, then there are $5$ distinct colors of $C$ in the coloring of $\Ext(C)$. Therefore, we may perform a set of permutations such that the same edges of $C$ have the same colors in $\Ext(C)$ and $\Int(C)$. Glue $\Ext(C)$ and $\Int(C)$ together. Each of $e_1, e_2, e_3$ sees (possibly distinct sets of) at most $9$ colors of $\Ext(C)$. Swap $c_\phi(e_1)$ with the $10$th available color, $c_\phi(e_2)$ with the $11$th available color, and $c_\phi(e_3)$ with the $12$th available color. This gives a good coloring for all of $G$.
        \end{proof}

        \vspace{0.25cm}
        \noindent\textbf{Case 2:} There is one interior edge of $C$ incident to a $4$-vertex.
\begin{proof}
For this proof, we rely more on the structure of $C$. Set $C = v_1v_2v_3v_4v_5v_1$. Let $v_1$ be adjacent to $u_1$, let $v_2$ be adjacent to $u_2, u_3$, let $v_3$ be adjacent to $u_4$, let $v_4$ be adjacent to $u_5$, and let $v_5$ be adjacent to $u_6, u_7$. Let $e_1$ be the edge incident to the $4$-vertex in the interior of $C$. Without loss of generality, suppose this vertex is $v_2$. Let $e_2$ be the edge incident to $v_2$ in the exterior of $C$. Let $f_1 = v_1u_1, f_2 = v_3u_4, f_3 = v_4u_5$. 

We now color $\Int(C)$ and $\Ext(C)$ so that each edge of $(C \cup \{e_1, e_2, f_1, f_2, f_3\}) \cap \Int(C)$ has a distinct color, and likewise each edge of $(C \cup \{e_1, e_2, f_1, f_2, f_3\}) \cap \Ext(C)$ has a distinct color. For $\Int(C)$, we take the worst possible case, which covers all other cases. That is, suppose $f_1, f_2, f_3$ are all contained in $\Int(C)$. Color $\Int(C) - C$ by $\phi$ using minimality. We now extend this coloring. Note that $|A_\phi(f_i)| \ge 4$ for all $i \in \{1, 2, 3\}$, and $|A_\phi(e_1)| \ge 5$. Moreover, $|A_\phi(v_3v_4)| \ge 7$, $|A_\phi(v_1v_2)|, |A_\phi(v_2v_3)| \ge 8$, and $|A_\phi(v_4v_5)|, |A_\phi(v_5v_1)| \ge 10$. Therefore, by Hall's Theorem, there exists a system of distinct representatives for the edges of $C, e_1, f_1, f_2, f_3$, as desired.

We now consider the exterior of $C$. There are two worst possible cases here. By Lemma \ref{lem:no-adjacent-vertex-cut}, $f_1, f_2, f_3$ may not all be in $\Ext(C)$. Either $f_1, f_2$ must be in $\Int(C)$ or $f_3$ must be in $\Int(C)$. Thus, we color $\Ext(C) - C$ in $\psi$, assuming either that $f_3$ is in $\Ext(C)$ or assuming that $f_1, f_2$ are in $\Ext(C)$. Begin with the first case. Then, $|A_\psi(v_5u_6)|, |A_\psi(v_5u_7)| \ge 3$, $|A_\psi(f_3)| \ge 4$, $|A_\psi(e_2)| \ge 5$, $|A_\psi(v_4v_5)| \ge 6$, and $|A_\psi(v_1v_2)|, |A_\psi(v_2v_3)|, |A_\psi(v_3v_4)|, |A_\psi(v_5v_1)| \ge 9$. Again, by Hall's Theorem, we may assign a system of distinct representatives to each of the edges. Lastly, if instead $f_1, f_2$ are in $\Ext(C)$, then $|A_\psi(v_5u_6)|, |A_\psi(v_5u_7)| \ge 3$, $|A_\psi(f_1)|, |A_\psi(f_2)| \ge 4$, $|A_\psi(e_2)| \ge 5$, $|A_\psi(v_1v_4)| \ge 6$, $|A_\psi(v_1v_2)|, |A_\psi(v_2v_3)| \ge 8$, $|A_\psi(v_4v_5)| \ge 9$, and $|A_\psi(v_3v_4)| \ge 10$. Once again, we achieve a system of distinct representatives using Hall's Theorem. Every other case is a subcase of these two cases, so we may always get a system of distinct representatives.

 Now, permute the colors of $C$ in $\Int(C)$ and $\Ext(C)$, and glue $\Int(C)$ and $\Ext(C)$ back together. If $c_\phi(e_1)$ is bad, then $c_\phi(e_1)$ cannot be swapped with at most $10$ colors, consisting of $5$ colors on $C$, and an additional $5$ in $\Ext(C)$. So, swap $c_\phi(e_1)$ with the $11$th available color. We now swap $c(f_1), c(f_2)$, and $c(f_3)$, which may be in the interior or exterior of $C$. Each $c(f_i)$ cannot be swapped with the $5$ colors on $C$, the color $c_\phi(e_1)$ nor the at most $2$ edges of $\Ext(C)$ incident to a neighboring vertex of $f_i$. This is a total of $8$ colors. So, there is sufficient space to swap $c(f_1), c(f_2)$, and $c(f_3)$. Finally, in the case that $e_2$ sees a bad color on the endpoint of $e_1$, we swap $c_\psi(e_2)$. Then, $c_\psi(e_2)$ cannot be permuted with any color on $C$, nor can it be permuted with $c_\psi(e_1)$ nor the two colors on edges incident to the endpoint of $e_1$, nor the three $c(f_i)$. This is a total of $11$ colors, so we swap $c_\psi(e_2)$ with the $12$th available color. This gives a good color for all of $C$.
        \end{proof}

        \vspace{0.25cm}
        \noindent\textbf{Case 3:} There are two interior edges of $C$ incident to a $4$-vertex.

\begin{proof}
This case splits into two separate subcases, one in which the two interior edges are incident to the same vertex, and one in which they are incident to different vertices. Set $C = v_1v_2v_3v_4v_5v_1$. Let $v_1$ be adjacent to $u_1$, let $v_2$ be adjacent to $u_2, u_3$, let $v_3$ be adjacent to $u_4$, let $v_4$ be adjacent to $u_5$, and let $v_5$ be adjacent to $u_6, u_7$.

            \vspace{0.25cm}
            \noindent\textbf{Subcase 3.1:} There are two interior edges of $C$ incident to the same $4$-vertex.
\begin{proof}[Subproof]
 Without loss of generality, let $e_1, e_2$ both be adjacent to $v_2$, and suppose that $e_1, e_2$ are interior edges of $C$. We may additionally assume that there is one interior edge incident to a $3$-vertex, which must be $v_4$, for if there are two edges incident to a $3$-vertex then we consider $\Ext(C)$, and if the interior edge is incident to $v_1$ or $v_3$, then we violate Lemma \ref{lem:no-adjacent-vertex-cut}. We color the interior and exterior such that each color on $C$ is unique. So, in $\Ext(C)$, after coloring $\Ext(C) - C$ by $\psi$ using minimality, we have $|A_\psi(v_5u_6)|, |A_\psi(v_5u_7)| \ge 3$, $|A_\psi(v_1u_1)|, |A_\psi(v_3u_4)| \ge 4$, $|A_\psi(v_1v_5)| \ge 6$, and $|A_\psi(v_1v_2)|, |A_\psi(v_2v_3)|, |A_\psi(v_3v_4)|, |A_\psi(v_4v_5)| \ge 9$. Thus, using Hall's Theorem, we get a system of distinct representatives for the edges. Likewise, after coloring $\Int(C) - C$ by $\phi$, we get $|A_\phi(e_1)|, |A_\phi(e_2)| \ge 3$, $|A_\phi(v_4u_5)| \ge 4$, and $|A_\phi(v_1v_2)|, |A_\phi(v_2v_3)|, |A_\phi(v_3v_4)|, |A_\phi(v_4v_5)|$, $|A_\phi(v_5v_1)| \ge 8$. Thus, again by Hall's Theorem, we get a system of distinct representatives. Now, permute the colors of $C$ in $\Int(C)$ and $\Ext(C)$ to be identical, and glue $\Int(C)$ and $\Ext(C)$ back together. First, if $c_\phi(e_1), c_\phi(e_2)$ are bad, then $c_\phi(e_1), c_\phi(e_2)$ cannot be permuted with $7$ colors of $\Ext(C)$, including the $5$ colors on $C$. Thus, there is enough space to permute them with the remaining colors to give them a good coloring. Then, if $c_\phi(v_4u_5)$ is bad, this color cannot be permuted with at most $8$ colors of $\Ext(C)$, plus the additional two on $c_\phi(e_1), c_\phi(e_2)$. This is $10$ colors, so we permute $c_\phi(v_4u_5)$ with the $11$th available color. Thus, we have achieved a good color for $G$.
            \end{proof}

 \vspace{0.25cm}
\noindent\textbf{Subcase 3.2:} There are two interior edges of $C$ incident to the distinct $4$-vertices.
\begin{proof}[Subproof]
For identical reasons to Subcase 3.1, we need only consider there being one additional interior edge incident to a $3$-vertex. Call the edges incident to $3$-vertices $f_1, f_2, f_3$. Let $g_1, g_2, g_3, g_4$ be incident to the two $4$-vertices of $C$. Say that $g_1, g_2$ are incident to the same $4$-vertex, and $g_3, g_4$ are incident to the same $4$-vertex. Let $g_1, g_4$ be in the interior, and $g_2, g_3$ be in the exterior. Moreover, let $g_1, g_2$ be incident to $v_2$, and let $g_3, g_4$ be incident to $v_4$. Finally, let the additional endpoint of $g_i$ be $u_i$.

So, first color $\Ext(C) - C$ in $\psi$. In the worst case, all the $f_i$ are exterior edges. So, we then have $|A_\psi(f_i)| \ge 4$ for each $1 \le i \le 3$, and $|A_\psi(g_2)|, |A_\psi(g_3)| \ge 5$. All edges of $C$ have $8$ colors available to them, except $v_3v_4$, which has $7$ colors available to it. If any of the $3$ vertices are interior, then two edges of $C$ have $3$ more colors available to them, and this is sufficient to color all edges distinctly with Hall's Theorem. Otherwise, we color in order $f_1, f_2, f_3, g_2, g_3$. We also permit $v_1v_2$ and $v_5v_1$ to be colored identically to the $f_i$ they do not see. We can guarantee they do not see the $f_i$ furthest opposite them, for otherwise there must exist a $3$-cycle or separating $4$-cycle, violating Lemma \ref{no-3-cycles} or Lemma \ref{lem:no-sep-4-cycle}. Then, $|A_\phi(v_3v_4)| \ge 2$, $|A_\phi(v_2v_3)|, |A_\phi(v_4v_5)| \ge 3$, and $|A_\phi(v_1v_2)|, |A_\phi(v_6v_1)| \ge 4$. If $A_\psi(v_2v_3) \cap A_\psi(v_4v_5) \neq \emptyset$, then we color them the same. So, suppose that we may give them the same color. Then, we color in order $v_3v_4, v_1v_2, v_1v_6$ and achieve a good coloring for $\Ext(C)$. Otherwise, we can not color them identically, in which case $A_\psi(v_2v_3) \cap A_\psi(v_4v_5) = \emptyset$, forcing $|A_\psi(v_2v_3) \cup A_\psi(v_4v_5)| \ge 6$. Therefore, we color in order $v_3v_4, v_1v_2, v_6v_1, v_2v_3, v_4v_5$ to achieve a good coloring for $\Ext(C)$. We now color $\Int(C) - C$ in $\phi$. Now, there is at most one internal $f_i$, so we can guarantee that there are two $2$-vertices in $\Int(C)$ on the cycle $C$. Therefore, there are $4$ cycle edges with at least $10$ available colors, and the remaining $2$ have at least $8$ cycle colors. The internal $f_i$ has $3$ available colors, and $|A_\phi(g_1)|, |A_\phi(g_4)| \ge 4$. Therefore, if $c_\psi(v_2v_3) = c_\psi(v_4v_5)$, then since $|A_\phi(v_2v_3) \cap A_\phi(v_4v_5)| \ge 3$, given that $|A_\phi(v_2v_3)|, |A_\phi(v_4v_5)| \ge 8$, we may first color $v_2v_3, v_4v_5$ identically, and then the remainder of the edges uniquely by Hall's Theorem. Otherwise, if $c_\psi(v_2v_3) \neq c_\psi(v_4v_5)$, then we just color all edges distinctly using Hall's Theorem.

We now join $\Int(C)$ and $\Ext(C)$, after permuting $C$ in $\Int(C)$ so that $C$ is colored identically in $\phi$ and $\psi$. We first permute the exterior of $C$. So, $c_\psi(g_2), c_\psi(g_3)$ must avoid the $5$ colors of $C$, plus the $3$ colors $c_\phi(u_1), c_\phi(u_4)$ respectively, and the at most $1$ color from the interior $f_i$, for a total of $9$. We permute then with the $10$th and $11$th available. We then permute $c_\phi(g_1), c_\phi(g_4)$. Each of these must avoid the $5$ colors of $C$, the $3$ colors of $c_\psi(u_2), c_\psi(u_5)$ respectively, and the $3$ colors of the $f_i$, for a total of $11$. We permute with the $12$th and $13$th. Finally, we must permute the one interior $f_i$. This must avoid the at most $4$ additional edges of $\Ext(C)$, the $5$ cycle colors, and the $2$ colors $c_\phi(g_1), c_\phi(g_4)$. We otherwise permit this edge to permute with $c_\phi(u_1) - c_\phi(g_1), c_\phi(u_4) - c_\phi(g_4)$, for we earlier permuted $c_\psi(g_2), c_\psi(g_3$ to be good with the interior $f_i$. Therefore, $f_\phi(f_i)$ must avoid at most $11$ colors, so we may permute it with the $12$th available. We have resolved all color conflicts, and thus achieved a good coloring for $G$.
            \end{proof}
            In every case, we have a good coloring, so there is no separating $5$-cycle $C$ with two interior edges incident to a $4$-vertex.
        \end{proof}

We need not consider any more cases, for if there are three interior edges of a $5$-cycle incident to a $4$-vertex, then there is only one exterior edge incident to a $4$-vertex, which is symmetric to Case 2. Likewise if there are four interior edges incident to a $4$-vertex.
\end{proof}

\subsection{Proof of Lemma~\ref{lem:no-sep-6-cycle}: no separating 6-cycles}

    \begin{proof}
        Let $C$ be such a $6$-cycle. We first claim that it is sufficient to suppose that our $6$-cycle has three $4$-vertices and three $3$-vertices, so long as our proof satisfies some conditions. Suppose first that the separating $6$-cycle contains a $2$-vertex, $u$. Then, suppose that the graph is reducible if $u$ is replaced by a $3$-vertex with an additional edge $e$ is reducible. We may remove the edge $e$ to get a coloring when $u$ is a $2$-vertex. So, we may assume that there are no $2$-vertices in $C$. Now, consider a particular configuration $C$ that has a $3$-vertex with edge $e$ which may be swapped with a $4$-vertex $v$ without violating the ORE condition. Call this swapped cycle $C'$, and say that $vu_1, vu_2$ are two edges incident to $v$ which are not embedded in $C'$. We may additionally assume that $vu_1, vu_2$ are exterior to $C'$ if $e$ is exterior to $C$, and interior to $C'$ if $e$ is interior to $C$. Suppose we prove that $C'$ is a reducible configuration. Suppose further that we always assume that $|c(u_1) \cup c(u_2)$ is maximal. Thus, the configuration $C'$ must be colored by strictly more colors than $C$, and every edge incident to $v$ has strictly fewer colors available to it in $C'$ than in $C$. Therefore, a coloring of $C'$ under these conditions is a strictly worse case, and so if $C'$ is colorable, then $C$ is also colorable.

        Set $C = v_1v_2v_3v_4v_5v_6v_1$. Let $e_i = v_iv_{i+1}$ for $i \le 5$, and let $e_6 = v_6v_1$. Suppose that $v_1, v_3, v_5$ are $3^-$-vertices and $v_2, v_4, v_6$ are $4$-vertices. Without loss of generality, suppose that $f_1, f_2, f_3$ are incident to $3$-vertices $v_1, v_3, v_5$. Likewise, let the pair $g_1, g_2$, the pair $g_3, g_4$, and the pair $g_5, g_6$ be adjacent to $v_2, v_4, v_6$ respectively. Say also that $g_i$ has an additional endpoint $u_i$.

        \vspace{0.25cm}
        \noindent\textbf{Case 1:} Suppose that none of the $g_i$ are interior edges.
            \begin{proof}
                Color $\Ext(C)$ by minimality, and note that at most $f_1, f_2, f_3$ are interior edges. Color $\Int(C) - C$ in $\phi$. We claim that we may extend $\phi$ to $C$ such that if $c(e_i) = c(e_j)$ in $\Ext(C)$, then $c(e_i) = c(e_j)$ in $\Int(C)$. So, $|A_\phi(e_i)| \ge 10$ for each edge $e_i$ on $C$. Thus, for $i \le 3$, we have $|A_\phi(e_i) \cap A_\phi(e_i)| \ge 7$. Therefore, by Hall's Theorem, we may color all three pairs of edges on $C$ identically. After each $e_i, e_j$ satisfying $c(e_i) = c(e_j)$ in $\Ext(C)$ is identically colored, we may color the remaining edges, which satisfy $|A_\phi(e_k)| \ge 8$, and thus may be colored distinctly by Hall's Theorem. Note then that $|A_\phi(f_i)| \ge 1$. None of the $f_i$ see each other through $C$, since $C$ has three $4$-vertices, so we may color each $f_i$. Now, we permute $\Int(C)$, so that we may join $\Ext(C)$ and $\Int(C)$ back together. We now recolor the $c(f_i)$ in the event that their colors are bad in the new coloring of $G$. If $c(f_i) = c(f_j) = c(f_k)$ or $c(f_i) = c(f_j)$, then there are at most $12$ colors we cannot swap $c(f_i)$ with, consisting of the at most $6$ colors of $C$, and the at most $6$ colors $c(g_i)$. Thus, we swap with the $13$th available color. For the remaining distinctly colored $c(f_j)$, each $c(f_j)$ cannot be swapped with at most $10$ colors, consisting of the at most $6$ colors on $C$  plus the $4$ colors on additional edges in $\Ext(C)$. Thus, there are at least $3$ colors we may swap each $c(f_j)$ with. Since there are at most $3$ edges $c(f_j)$, then we may permute the $c(f_j)$ in $\Int(C)$ to get a good coloring of $G$. 
            \end{proof}

        \vspace{0.25cm}
        \noindent\textbf{Case 2:} Suppose that there is one interior edge of $C$ incident to a $4$-vertex.
            \begin{proof}
		    Suppose, without loss of generality, that $g_1$ is the interior edge. We must color $\Ext(C)$ so that it has at most one repeat edge on $C$. Now, not all the $f_i$ may be exterior edges, for then $g_1$ is a cut edge, contradicting Lemma \ref{lem:no-adjacent-vertex-cut}. Therefore, we may guarantee that when coloring $\Ext(C)$, there is at least one $2$-vertex $u$ in $\Ext(C)$. Suppose first that there are two external edges. Then, we must have $u = v_5$, for if $u \in \{v_1, v_3\}$, then $\{u, v_2\}$ is an edge cut, contradicting Lemma \ref{lem:no-adjacent-vertex-cut}. We begin with this case.

		\vspace{0.25cm}
                \noindent\textbf{Subcase 2.1:} Both $f_1, f_2$ are exterior edges and $f_3$ is an interior edge. 
			\begin{proof}[Proof of Subcase]
				Since $f_3$ is interior, we may make a new graph $H$ by attaching the edge $v_2v_5$ to $\Ext(C)$ without violating planarity or the degree sum condition. Color $H$ by minimality, and place the resulting coloring on $\Ext(C)$. Note that $g_2$ is now colored distinctly from $e_4, e_5$ because of the existence of $v_2v_5$. Thus, $g_2$ is colored distinctly from all colors on $C$. Then, color $\Int(C) - C$ in $\phi$. Place $\phi$ on $\Int(C)$. We will extend it to all of $\Int(C)$. First, $|A_\phi(f_3)| \ge 4$, $|A_\phi(g_1)| \ge 5$, and $|A_\phi(e_i)| \ge 10$, for all $i$. For every pair of edges $e_i, e_j$, if these edges are colored identically in $C$, then we have $|A_\phi(e_i) \cap A_\phi(e_j)| \ge 7$. Therefore, we are able to first color identically the pairs of edges on $C$ which must be colored identically, and then color the remainder by Hall's Theorem. We can further guarantee that for each color is distinct.

				Now, permute $\Int(C)$ and $\Ext(C)$ such that $C$ is colored identically in both graphs. Then, join $\Int(C)$ and $\Ext(C)$ together. We first permute $g_2$ in $\Ext(C)$. This must avoid the at most $6$ colors of $C$, the $3$ colors of $c(u_1)$, and the $2$ colors $c(f_1), c(f_2)$. So, we permute with the $12$th available color. We note that when permuting $c(g_2)$, we may also permute a color on some other $c(g_i)$ or $c(f_3)$, but we do not permute any colors of $C$. We then permute $c(g_1)$. This must avoid the $6$ colors of $C$ and the $3$ colors of $c(u_2)$, so we permute it with the $10$th available color. We then permute $c(f_1)$ and $c(f_2)$. Both must avoid the at most $6$ colors of $C$, the $3$ colors of neighboring $c(g_i)$, and the color $c(g_1)$. We note that we may permute with $c(u_1) - c(g_1)$, for either $c(f_i)$ was swapped with $c(g_1)$ in the previous step, and is thus good with $c(g_2)$, or $c(f_i)$ was not swapped, in which case we permuted $c(g_2)$ to be good with $c(f_i)$. In either case, permuting with $c(u_1) - c(g_1)$ therefore introduces no new color conlicts. Thus, each $c(f_i)$ has $10$ colors it must avoid, so we permute with the $11$th and $12$th for a good coloring of $G$. 
			\end{proof}

		\vspace{0.25cm}
		\noindent\textbf{Subcase 2.2:} There are at least two interior edges $f_i$.
			\begin{proof}[Proof of Subcase]
				We color $\Ext(C) - \{v_1, v_2, v_3\}$ in $\psi$. There are two worst cases we consider. First, it may be that $f_1, f_2$ are both interior edges. In this case, we have $|A_\psi(e_6)|, |A_\psi(e_3)| \ge 4$, $|A_\psi(g_2)| \ge 5$, and $|A_\psi(e_1)|, |A_\psi(e_2)| \ge 8$. We want to guarantee that $e_1, e_2, g_2$ are colored distinctly from $e_4, e_5$, so we remove at most $2$ colors from their list of available colors, giving $|A_\psi(g_2)| \ge 3$, $|A_\psi(e_6)|, |A_\psi(e_3)| \ge 4$, and $|A_\psi(e_1)|, |A_\psi(e_2)| \ge 6$. We then color the edges distinctly by Hall's Theorem. In the other worst case, we have either $f_1, f_3$ or $f_2, f_3$ as interior edges. The case is symmetric, so suppose that $f_2, f_3$ are interior edges. Then, $|A_\psi(f_1)| \ge 1$, $|A_\psi(e_6)| \ge 2$, $|A_\psi(g_2)|, |A_\psi(e_1)|, |A_\psi(e_3)| \ge 5$, and $|A_\psi(e_2)| \ge 8$. For $e_1, e_2, g_2$, we want to guarantee these are not colored the same as one of $e_4, e_5$, so we are left with $|A_\psi(f_1)| \ge 1$, $|A_\psi(e_6)| \ge 2$, $|A_\psi(g_2)| \ge 3$, $|A_\psi(e_1)| \ge 4$, $|A_\psi(e_3)| \ge 5$, and $|A_\psi(e_2)| \ge 7$. Thus, we are able to color the remaining edges distinctly by Hall's Theorem. We now color $\Int(C)$. So, color $\Int(C) - C$ in $\phi$. Then, place $\phi$ on $\Int(C)$. Suppose all the $f_i$ are interior, for this is the worst case. So, $|A_\phi(f_i)| \ge 3$ for all $1 \le i \le 3$, $|A_\phi(g_1)| \ge 5$, $|A_\phi(e_1)|, |A_\phi(e_2)| \ge 8$, and $|A_\phi(e_3)|, |A_\phi(e_4)|, |A_\phi(e_5)|, |A_\phi(e_6)| \ge 10$. By Hall's Theorem, we may color all such edges distinctly.

			We now permute $\Int(C)$ and $\Ext(C)$ such that $C$ is colored identically in both graphs. Join $\Int(C)$ and $\Ext(C)$ together. We begin by permuting the exterior, then the interior. First, we permute $c(g_2)$. This must avoid the $6$ colors of $C$, the $3$ colors of $c(u_1)$, and the $3$ colors $c(f_1), c(f_2), c(f_3)$, for a total of $12$. We permute with the $13$th available. We then permute $c(g_1)$. This must avoid the $6$ colors of $C$, the $3$ colors of $c(u_2)$, and the at most $1$ adjacent color $c(f_i)$. Finally, we permute the $c(f_i)$. There are at most $3$. Each must avoid the $6$ colors of $C$ and the color $c(g_1)$. As in Subcase 2.1, we may permute with $c(u_1) - c(g_1)$. Two $c(f_i)$ must avoid $3$ additional colors in $\Ext(C)$, and one must avoid $4$. This makes for two that must avoid $10$ colors, and one that must avoid $11$, which is $f_3$. Therefore, we permute $c(f_3)$ with its $12$th color, $c(f_1)$ with its $12$th color, and $c(f_2)$ with its $13$th color. Thus, we have achieved a good coloring for all of $G$.
			\end{proof}
		We have covered every case.
	\end{proof}

        \vspace{0.25cm}
        \noindent\textbf{Case 3:} Suppose that there are two interior edges of $C$ incident to a $4$-vertex.
            \begin{proof}
		We will need to consider two separate cases.

                \vspace{0.25cm}
                \noindent\textbf{Subcase 3.1:} Suppose that the two interior $g_i$ are adjacent to different $4$-vertices.
                	\begin{proof}[Subproof]
				Without loss of generality, we may assume that $g_1, g_6$ are both interior edges. We must consider several different configurations.

				\vspace{0.25cm}
				\noindent\textbf{Config. 3.1.1:} We first suppose that all the $f_i$ are exterior edges.

					Color $\Ext(C)$ by minimality. We now color $\Int(C) - C$ in $\phi$. We will extend $\phi$ to $C$, but in doing so, we must guarantee that the colors of $C$ have the same configuration in $\Int(C)$ as in $\Ext(C)$. Moreover, it may be that $c_\psi(g_2), c_\psi(g_5)$ are colored identically to $c(e_4), c(e_5)$ or $c(e_2), c(e_3)$ respectively. Therefore, we must guarantee that, after coloring $\Int(C)$, we have $c(e_4), c(e_5) \not \in c_\phi(g_1)$, and likewise that $c(e_2), c(e_3) \not \in c_\phi(g_6)$. If these are satisfied, then whenever $c_\psi(g_2), c_\psi(g_5)$ are on colors of $C$, then they are automatically good with the colors incident to the endpoints $u_1, u_6$ of $g_1, g_6$. So, we forbid $c_\phi(u_1)$ from $A_\phi(e_5)$, $A_\phi(e_4)$, and we forbid $c_\phi(u_6)$ from $A_\phi(e_2), A_\phi(e_3)$. Thus, we have $|A_\phi(g_1)|, |A_\phi(g_6)| \ge 5$, $|A_\phi(e_2)|, |A_\phi(e_5)| \ge 9$, and $|A_\phi(e_1)|, |A_\phi(e_3)|, |A_\phi(e_4)|, |A_\phi(e_6)| \ge 11$. Note that for any pair $e_i, e_j$, we have $|A_\phi(e_i) \cap A_\phi(e_j)| \ge 5$. Therefore, we may color all pairs of edges $e_i, e_j$ identically or uniquely, and obtain a good coloring of $\Int(C)$ in which $g_1, g_6$ are colored uniquely from all colors of $C$.

					Now, permute $C$ in $\Int(C)$ so that we may join $\Int(C)$ and $\Ext(C)$ back together. We first begin by permuting $\Ext(C)$. If either of $c_\psi(g_2), c_\psi(g_5)$ are on $C$, then we do not permute them. Otherwise, first assuming they are distinct, each must avoid the at most $6$ colors of $C$ and the $3$ colors of $c_\phi(u_1), c_\phi(u_6)$ respectively, for a total of $9$. Therefore, we permute them with the $10$th and $11$th available colors. If, alternatively, $c_\psi(g_2) = c_\psi(g_5)$, then this color must avoid the at most $6$ colors of $C$, and the at most $6$ colors of $c_\phi(u_1) \cup c_\phi(u_6)$, for a total of $12$. We permute with the $13$th available color. We now permute $\Int(C)$. Recall that we have colored $g_1, g_6$ uniquely, so $c_\phi(g_1) \neq c_\phi(g_6)$. Now, both must avoid the $6$ colors of $C$, the $3$ colors of $c_\psi(u_2)$, $c_\psi(u_5)$ respectively, and the $2$ colors from adjacent $f_i$, for a total of $11$. We permute with then with the $12$th and $13$th available. Therefore, we have achieved a good coloring for this configuration.

				\vspace{0.25cm}
				\noindent\textbf{Config. 3.1.2:} Now suppose $f_1$ is an interior edge.

					We will need to perform slightly different colorings of $\Ext(C)$ depending on whether $i=1$ or $i = 2, 3$. Note that the case of $i=2$ and $i=3$ are symmetric, so we need only consider two separate colorings. First, suppose that $f_1$ is an interior edge. Color $\Ext(C) - \{v_1, v_2, v_6\}$ in $\psi$. We require that $g_2, g_5, e_1, e_6$ is colored distinctly from all colors on $C$. Therefore, $|A_\psi(g_2)|, |A_\psi(g_5)| \ge 2$, $|A_\psi(e_2)|, |A_\psi(e_5)| \ge.$, and $|A_\psi(e_1)|, |A_\psi(e_6)| \ge 7$. We note that $e_2$ cannot see $g_5, e_5$, and likewise $e_5$ cannot see $e_2$, $g_2$. We require $g_2, g_5$ to be colored distinctly, but $e_2, e_5$ may be colored identically. We enforce that $e_1, e_6$ are colored distinctly from all other colors on $C$ and $g_2, g_5$, so that only $c(e_2) = c(e_5)$ may hold among the edges of $C$. We color in order $g_2, g_5, e_2, e_5, e_1, e_6$, providing a good coloring in which only $c(e_2) = c(e_5)$ may hold on the edges of the cycle. We then color $\Int(C) - C$ in $\phi$, and place $\phi$ on $C$. We have $|A_\phi(f_1)| \ge 4$, $|A_\phi(g_1)|, |A_\phi(g_6)| \ge 5$, $|A_\phi(e_1)|, |A_\phi(e_6)| \ge 8$, $|A_\phi(e_2)|, |A_\phi(e_5)| \ge 11$, and $|A_\phi(e_3)|, |A_\phi(e_4)| \ge 13$. We must have $|A_\phi(e_2) \cap A_\phi(e_5)| \ge 9$, so if $c_\psi(e_2) = c_\psi(e_5)$, then we give $e_2, e_5$ the same color, and then color in order $f_1, g_1, g_6, e_1, e_6, e_3, e_4$. Otherwise, we color in order $f_1, g_1, g_6, e_1, e_6, e_2, e_5, e_3, e_4$.

				Now, permute $C$ in $\Int(C)$ to have the same color as $C$ in $\Ext(C)$. We begin with $g_2, g_5$. Note that these may have the same color as $g_3, g_4$. So, we $c_\psi(g_2)$ and $c_\psi(g_5)$ must avoid the at most $6$ colors of $C$, the $3$ colors of $c_\phi(u_1), c_\phi(u_6)$ respectively, and the $1$ color $c_\phi(f_1)$. We also require that both avoid $c_\phi(g_1), c_\phi(g_6)$. This is a total of $11$ colors, so we permute with the $12$th and $13$th respectively (recall we colored $\Ext(C)$ such that $c_\psi(g_2) \neq c_\psi(g_5)$). We then permute $c_\phi(g_1), c_\phi(g_6)$. These must avoid the $6$ colors of $C$, the $3$ colors of $c_\psi(u_2)$, and $1$ color of $c_\psi(f_2)$, so we swap with the $12$th and $13$th respectively. Note that neither must avoid $c_\phi(u_1) -c_\phi(g_1)$ or $c_\phi(u_6) - c_\phi(g_6)$, for we required that both $c_\phi(g_1), c\phi(g_6)$ be good with $c_\psi(g_2), c_\psi(g_5)$. Moreover, $f_1$ is now good, since $c_\psi(g_2), c_\psi(g_5)$ were colored not to conflict with $c_\phi(f_1)$. There are now no conflicts between $\phi$ and $\psi$, so we have a good coloring of $G$.

				\vspace{0.25cm}
				\noindent\textbf{Config. 3.1.3:} Now suppose that either $f_2$ or $f_3$ is an interior edge.

					So, suppose that $f_2$ is an interior edge, and $f_1, f_3$ exterior. Color $\Ext(C) - C$ in $\psi$. We then have $|A_\psi(g_3)|, A_\psi(g_4)| \ge 3$, $|A_\psi(f_1)|, |A_\psi(f_3)| \ge 4$, $|A_\psi(g_2)|, |A_\psi(g_5)| \ge 5$, $|A_\psi(e_4)| \ge 6$, $|A_\psi(e_1)|, |A_\psi(e_5)|, |A_\psi(e_6)| \ge 8$, $|A_\psi(e_3)| \ge 9$, and $|A_\psi(e_2)| \ge 11$. We first color $g_2, g_5$ distinctly. We then color in order $g_3, g_4, f_1, f_3$. When performing this coloring, we permit $c_\psi(g_2), c_\psi(g_5)$ to equal $c_\psi(g_3), c_\psi(g_4)$, and we permit $c_\psi(g_2) = c_\psi(f_3)$. We also allow $c_\psi(f_1)$ to equal $c_\psi(f_3), c_\psi(g_3), c_\psi(g_4)$. Thus, upon coloring $g_3, g_4$, we have $|A_\psi(g_3)|, |A_\psi(g_3)| \ge 3$, $|A_\psi(f_3)| \ge 3$, and $|A_\psi(f_1)| \ge 2$. Thus, everything may be colored. We then color $C$. We require that none of $C$ be colored identically to $c_\psi(c_2), c_\psi(g_5)$. Otherwise, we permit $c_\psi(e_i)$ to equal $c_\psi(g_3), c_\psi(g_4)$, or $c_\psi(f_i)$. Thus, we have $|A_\psi(e_4)| \ge 1$, $|A_\psi(e_5)| \ge 2$, $A_\psi(e_6)| \ge 4$, $|A_\psi(e_1)| \ge 5$, and $|A_\psi(e_2)| \ge 6$. Therefore, we color the remaining set of edges distinctly using Hall's Theorem. Now, we have all colors on $C$ distinct. We may not have $c_\psi(g_2), c_\psi(g_5)$ as a color of $C$. We may have $c_\psi(g_2), c_\psi(g_5)$ as a color of $c_\psi(g_3), c_\psi(g_4), c_\psi(f_1), c_\psi(f_3)$. We now color $\Int(C)$. So, color $\Int(C) - C$ in $\phi$. We may color this case identically to the coloring of $\Int(C)$ in Config. 3.1.2, in the case when there are no repeat colorings.

					Now, we first permute $c_\psi(g_2), c_\psi(g_5)$. These both must avoid the $6$ colors of $C$, plus the $3$ colors of $c_\phi(u_1), c_\phi(u_6)$ respectively, as well as the $1$ color $c_\phi(f_2)$. This is a total of $10$, so we permute with the $11$th and $12$th available colors. We then permute $c_\phi(f_2)$. This must avoid the $6$ colors of $C$, the $2$ colors of $c_\psi(g_3), c_\psi(g_4)$, and the color $c_\psi(g_2)$. We permit $c_\phi(f_2)$ to permute with $c_\phi(u_1), c_\phi(u_6)$. This is a total of $9$ colors, so we permute with the $10$th available. We lastly permute $c_\phi(g_1), c_\phi(g_6)$. These must avoid the $6$ colors of $C$, the $3$ colors of $c_\psi(u_2), c_\psi(u_5)$, the one color $c_\phi(f_2)$, and in the case of $g_1$, the two colors $c_\psi(f_1), c_\psi(f_3)$, and in the the case of $g_6$, only the one color $c_\psi(f_1)$. This is a total of $12$ and $11$ colors respectively, so we permute $c_\phi(g_1)$ with the $13$th available, and $c_\phi(g_6)$ with the $13$th available. We have resolved all possible conflicts between $\phi$ and $\psi$, so we have a good coloring of $G$.

				\vspace{0.25cm}
				\noindent\textbf{Config. 3.1.4:} Suppose that $f_2, f_3$ are both interior edges.

				Color $\Ext(C) - C$ in $\psi$. We then have $|A_\psi(g_3)|, |A_\psi(g_4)| \ge 3$, $|A_\psi(f_1)| \ge 4$, $|A_\psi(g_2)|, |A_\psi(g_5)| \ge 5$, $|A_\psi(e_1)|, |A_\psi(e_6)| \ge 8$, $|A_\psi(e_3)|, |A_\psi(e_4)| \ge 9$, and $|A_\psi(e_2)|, |A_\psi(e_5)| \ge 11$. Therefore, by Hall's Theorem, we may color the full set of edges independently. We now color $\Int(C) - C$ in $\phi$. We have $|A_\phi(f_2)|, |A_\phi(f_3)| \ge 4$, $|A_\phi(g_1)|, |A_\phi(g_6)| \ge 5$, $|A_\phi(e_2)|, |A_\phi(e_5)| \ge 8$, $|A_\phi(e_3)|, |A_\phi(e_4)| \ge 10$, and $|A_\phi(e_1)|, |A_\phi(e_6)| \ge 11$. Therefore, we may also color all edges uniquely using Hall's Theorem. Finally, we permute $C$ in $\phi$ to be colored identically to $C$ under $\psi$, and then rejoin $\Int(C)$ and $\Ext(C)$. We now permute our colors in $\Int(C)$ and $\Ext(C)$ to resolve color conflicts. We first permute $c_\psi(g_2), c_\psi(g_5)$. These must avoid $6$ colors of $C$, $3$ colors of $c_\phi(u_1), c_\phi(u_6)$ respectively, and the $1$ color $c_\phi(f_2), c_\phi(f_3)$ respectively, for a total of $10$. We permute with the $11$th and $12$th available. We then permute $c_\phi(g_1), c_\phi(g_6)$. These must avoid the $6$ colors of $C$, the $3$ colors of $c_\psi(u_2), c_\psi(u_5)$ respectively, and the $1$ color $c_\psi(f_1)$, for a total of $10$. We permute with the $11$th and $12$th available colors. We must finally make $c_\phi(f_2), c_\phi(f_3)$ not conflict with $c_\psi(g_2), c_\psi(g_3)$. So, we permute $c_\phi(f_2), c_\phi(f_3)$. Note that $c_\phi(f_2), c_\phi(f_3)$ is good with $c_\psi(g_2), c_\psi(g_5)$, for we preivously permuted $c_\psi(g_2), c_\psi(g_5)$ to be good with $c_\phi(f_2), c_\phi(f_3)$, and if we permute $c_\phi(g_1), c_\phi(g_6)$ with $c_\phi(f_2), c_\phi(f_3)$ in the previous step, then these colors are again good with $c_\psi(g_2), c_\psi(g_5)$. Therefore, we must only avoid the $6$ colors of $C$, the two colors $c_\phi(g_1), c_\phi(g_6)$, and, for $c_\phi(f_2)$, the three colors $c_\psi(g_2), c_\psi(g_3), c_\psi(g_4)$, and for $c_\phi(f_3)$, the three colors $c_\psi(g_2), c_\psi(g_3), c_\psi(g_5)$.This is a total of $11$. So, we permute with the $12$th and $13$th respectively, giving a good coloring for all of $G$.

				\vspace{0.25cm}
				\noindent\textbf{Config. 3.1.5:} Suppose that $f_1$ and one of $f_2, f_3$ are interior edges.

				The cases are symmetric, so we suppose that $f_1, f_2$ are both interior edges. Color $\Ext(C) - C$ in $\psi$. We then have $|A_\psi(g_3)|, |A_\psi(g_4)| \ge 3$, $|A_\psi(f_3)| \ge 4$, $|A_\psi(g_2)|, |A_\psi(g_5)| \ge 5$, $|A_\psi(e_4)| \ge 6$, $|A_\psi(e_5)| \ge 8$, $|A_\psi(e_3)| \ge 9$, and $|A_\psi(e_1)|, |A_\psi(e_2)|, |A_\psi(e_6)| \ge 11$. By Hall's Theorem, we may color the resulting set of edges distinctly. Now, color $\Int(C) - C$ in $\phi$. We have $|A_\phi(f_1)|, |A_\phi(f_2)| \ge 4$, $|A_\phi(g_1)|, |A_\phi(g_6)| \ge 5$, $|A_\phi(e_1)|, |A_\phi(e_2)|, |A_\phi(e_6)| \ge 8$, $|A_\phi(e_3)| \ge 10$, $|A_\phi(e_5)| \ge 11$, and $|A_\phi(e_4)| \ge 13$. So, we may again color this set of edges distinctly by Hall's Theorem. Now, we first recolor $c_\psi(g_2), c_\psi(g_5)$. These must both avoid the $6$ colors of $C$, the $3$ colors of $c_\phi(u_1), c_\phi(u_6)$ respectively, the $1$ color $c_\phi(f_1)$ for $c_\psi(g_5)$, and the $2$ colors $c_\phi(f_1), c_\phi(f_2)$ for $c_\psi(g_5), c_\psi(g_2)$ respectively. This is a total of $10$ for $c_\psi(g_5)$ and $11$ for $c_\psi(g_2)$, so we color them in the $11$th and $13$th respectively. We then permute $c_\phi(g_1), c_\phi(g_6)$. These must avoid the $6$ colors of $C$, the $3$ colors of $c_\psi(u_2), c_\psi(u_5)$, and $c_\phi(g_6)$ must avoid $c_\psi(f_3)$. We also require that both avoid $c_\phi(f_1)$. This is $10$ colors for $c_\phi(g_1)$ and $11$ colors fo $c_\phi(g_6)$, so we permute them with the $11$th and $13$th available colors respectively. Finally, we permute $c_\phi(f_2)$. This must avoid the $6$ colors of $C$, the $2$ colors $c_\phi(g_1), c_\phi(g_6)$, and the $2$ colors $c_\psi(g_3), c_\psi(g_4)$. This is a total of $10$ colors, so we permute with the $11$th available. Note, we allow $c_\phi(f_2)$ to permute with $c_\phi(u_1) - c_\phi(g_1), c_\phi(u_6) - c_\phi(g_6)$. The argument for why this is permitted is identical to permuting $c_\phi(f_2), c_\phi(f_3)$ in Config. 3.1.4. We have now resolved all conflicts and achieved a good coloring for $G$.

				\vspace{0.25cm}
				\noindent\textbf{Config. 3.1.6:} Suppose that all $f_1, f_2, f_3$ are interior edges.

As usual, we color $\Ext(C) - C$ in $\psi$. We have $|A_\psi(g_3)|, A_\psi(g_4)| \ge 3$, $|A_\psi(g_2)|, |A_\psi(g_5)| \ge 5$, $|A_\psi(e_3)|, |A_\psi(e_4)| \ge 9$, $|A_\psi(e_1)|, |A_\psi(e_2)|, |A_\psi(e_5)|, |A_\psi(e_6)| \ge 11$, so we color this set of edges distinctly by Hall's Theorem. We then color $\Int(C) - C$ in $\phi$. We have $|A_\phi(f_i)| \ge 4$ for all $1 \le i \le 3$, $|A_\phi(g_1)|, |A_\phi(g_6)| \ge 5$, $|A_\phi(e_1)|, |A_\phi(e_2)|, |A_\phi(e_5)|, |A_\phi(e_6)| \ge 8$, and $|A_\phi(e_3)|, |A_\phi(e_4)| \ge 10$. We color $f_1$, and permit $e_3$ to be colored identically to $c_\phi(f_1)$ so that it has $10$ colors still available to it. We are then able to color the remaining set of edges distinctly using Hall's Theorem. We permute $C$ in $\phi$ to be colored identically to $C$ in $\psi$. We then join $\Ext(C)$ and $\Int(C)$ back together.We first permute $c_\psi(g_2), c_\psi(g_5)$. These must avoid the $6$ colors of $C$, the $3$ colors of $c_\phi(u_1), c_\phi(u_6)$ respectively, and the colors of the $2$ adjacent interior $f_i$, for a total of $11$ colors. We permute with the $12$th and $13$th available. We then permute $c_\phi(g_1), c_\phi(g_6)$. These must avoid the $6$ colors of $C$ and the $3$ colors of $c_\psi(u_2), c_\psi(u_5)$. We also require they avoid the $2$ colors $c_\psi(g_3), c_\psi(g_4)$, for a total of $11$ colors. We permute with the $12$th and $13$th. We finally permute $c_\psi(g_3), c_\psi(g_4)$. These must avoid the $6$ colors of $C$, the $2$ colors $c_\phi(f_2), c_\phi(f_3)$, and the $2$ colors $c_\psi(g_2), c_\psi(g_5)$. We allow them to permute with $c_\psi(u_2), c_\psi(u_5)$ otherwise, for $c_\phi(g_1), c_\phi(g_6)$ are good with $c_\psi(g_3), c_\psi(g_4)$. This is a total of $10$ colors, so we permute with the $11$th and $12$th available. We have resolved all color conflicts, and thus achieved a good coloring for $G$.

			\end{proof} 

                We may finally consider the second subcase.

                \vspace{0.25cm}
\noindent\textbf{Subcase 3.2:} Suppose that two interior edges of $C$ are incident to the same $4$-vertex.

Without loss of generality, suppose that $f_1, f_2, f_3$ are incident to $3$-vertices $v_1, v_3, v_5$. Let $g_1, g_2$ be interior edges incident to $v_2$. Let $g_3, g_4$ and $g_5, g_6$ be incident to $v_4, v_6$ respectively. We begin with the assumption that $f_1, f_2, f_3$ are all interior edges. We first color $\Int(C) - C$ in $\phi$. We have $|A_\phi(g_1)|, |A_\phi(g_2)| \ge 3$, $|A_\phi(f_1)|, |A_\phi(f_2)|, |A_\phi(f_3)| \ge 4$, $|A_\phi(e_1)|, |A_\phi(e_2)| \ge 6$, $|A_\phi(e_3)|, |A_\phi(e_6)| \ge 10$, and $|A_\phi(e_4)|,|A_\phi(e_5)| \ge 11$. We first color $g_1, g_2$, and we remove $c(g_1), c(g_2)$ from every remaining available color list. We then color $f_1, f_2, f_3$, noting that $f_1$ does not see $f_2, f_3$, or it has at least one additional available color, and thus we may color in order $f_3, f_2, f_1$. We also note that $e_1$ either does not see $f_3$, or it has more colors available to it. Therefore, we are left with $|A_\phi(e_2)| \ge 1$, $|A_\phi(e_1)| \ge 2$, $|A_\phi(e_3)|, |A_\phi(e_6)| \ge 5$, and $|A_\phi(e_4)|,|A_\phi(e_5)| \ge 6$. Thus, we may color the remaining edges of $C$ distinctly by Hall's Theorem. We then color $\Ext(C) - C$ in $\psi$. We have $|A_\psi(g_3)|, |A_\psi(g_4)|, |A_\psi(g_5)|, |A_\psi(g_6)| \ge 3$, $|A_\psi(e_3)|, |A_\psi(e_4)|, |A_\psi(e_5)|, |A_\psi(e_6)| \ge 9$, and $|A_\psi(e_1)|, |A_\psi(e_2)| \ge 13$. We first color $g_3, g_4, g_5, g_6$. We note that $g_6$ does not see $g_3, g_4$ or has at least one additional color, so that this coloring is possible. We are then left with $|A_\psi(e_3)|, |A_\psi(e_4)|, |A_\psi(e_5)|, |A_\psi(e_6)| \ge 5$ and $|A_\psi(e_1)|, |A_\psi(e_2)| \ge 8$, so we color the edges of $C$ distinctly by Hall's Theorem. We permute $C$ in $\Ext(C)$ to place $\Int(C)$ and $\Ext(C)$ back together. We then permute $c(g_3), c(g_4), c(g_5), c(g_6)$. Each must avoid the $6$ colors of $C$, plus the $3$ colors from $c(f_1), c(f_2), c(f_3)$. These are all conflicts possible, so we permute $c(g_3), c(g_4), c(g_5), c(g_6)$ with the $10$th, $11$th, $12$th, and $13$th available colors respectively. Note that if $c(g_6) = c(g_3), c(g_4)$, then it is automatically good.

Now, suppose that there are two $f_i$ in $\Int(C)$. We follow an identical coloring schematic for $\Int(C)$ as before. For $\Ext(C)$, we must split into two cases, one in which $f_3$ is in $\Ext(C)$, and one in which it is not. First, suppose that $f_3$ is in $\Ext(C)$. Then, we color $\Ext(C) - v_1v_2v_3$. In the resulting graph, we have $|A_\phi(e_3)|, |A_\phi(e_6)| \ge 4$ and $|A_\phi(e_1)|, |A_\phi(e_2)| \ge 10$. We remove $c(e_4), c(e_5)$ from the colors available to $e_1, e_2$ to get $|A_\phi(e_1)|, |A_\phi(e_2)| \ge 8$. We may color the resulting graph distinctly using Hall's Theorem. Otherwise, $f_3$ is not in $\Ext(C)$. The cases are symmetric, so we may suppose that $f_1$ is in $\Ext(C)$. We color $\Ext(C) - C$ as usual. We then have $|A_\phi(g_i)| \ge 3$, and $|A_\phi(f_{1})| \ge 4$. Now, $f_1, g_6$ either do not see $g_3, g_4$, or they have an additional available color for each that they do see. Thus, we may color $g_3, g_4, g_5, g_6, f_1$. In the resulting graph, we have $|A_\phi(e_6)| \ge 1$, $|A_\phi(e_3)|, |A_\phi(e_4)|, |A_\phi(e_5)| \ge 4$, $|A_\phi(e_1)| \ge 5$, and $|A_\phi(e_2)| \ge 8$. Thus, we may color the remaining edges distinctly by Hall's Theorem.

In both cases, we have a distinct set of colors on $C$ in both $\Ext(C)$ and $\Int(C)$. In the case of $\Int(C)$, we can also guarantee that $g_1, g_2, f_2, f_3$ are all colored distinctly from $C$. Therefore, permute $C$, and join $\Ext(C)$ and $\Int(C)$ back together at $C$. Then, we first $c(f_2), c(f_3)$. Thus must both avoid the $6$ colors of $C$, plus the at most $4$ colors of $c(g_3), c(g_4), c(g_5), c(g_6)$, and the color $c(f_1)$. That is a total of $11$, so we permute with the $12$th and $13$th available colors. We then permute $g_2, g_3$. These must avoid the $6$ colors of $C$ and $c(f_1), c(f_2), c(f_3)$. We also cannot swap $c(g_2), c(g_3)$ with each other. This is a total of $10$ colors, so we swap with the $11$th and $12$th. Thus, we have a good coloring for all of $C$.

Finally, suppose that there are two exterior edges $f_i$. These must be $f_1, f_2$, for if $f_1, f_3$ are exterior, or, symmetrically, $f_2, f_3$, then there is an adjacent vertex cut $v_1, v_2$, contradicting Lemma \ref{lem:no-adjacent-vertex-cut}. We use the exact same method to color $\Int(C)$. Note in this case that we can guarantee that $f_3, g_1, g_2$ and all colors of $C$ are distinct. Moving on to $\Ext(C)$ So, color $\Ext(C) - C$ in $\psi$. We first focus on the two disjoint sections $g_5, g_6, f_1$ and $g_3, g_4, f_2$. The edges of these two sets either do not see each other, or for each additional edge they see, have one more color available to them. Since each edge in each set has at least $3$ available colors, then we may color these external edges. After coloring these edges, we have $|A_\psi(e_6)|, |A_\psi(e_3)| \ge 3$, $|A_\psi(e_4)|, |A_\psi(e_5)| \ge 4$, and $|A_\psi(e_1)|, |A_\psi(e_2)| \ge 6$. So, we may color the edges of the cycle distinctly. Now, $C$ has all colors distinct in $\Int(C)$ and $\Ext(C)$, so we may join the two graphs together again after properly permuting the edges of $C$. In this graph, we first permute $c(f_3)$ on $f_3$, which must avoid the $6$ colors of $C$, plus the $4$ colors $c(g_3), c(g_4), c(g_5), c(g_6)$. This is a total of $10$, so we permute $c(f_3)$ with the $11$th available color. We then permute $c(g_1), c(g_2)$. These must avoid the $6$ colors of $C$, the $2$ colors $c(f_1), c(f_2)$, and the $1$ color of $c(f_1)$, for a total of $9$. So, we permute with the $10$th and $11$th available, for a good coloring of $G$. This is every case, resolving Subcase 3.2.
                    \end{proof}

        \vspace{0.25cm}
        \noindent\textbf{Case 4:} There are three internal edges incident to a $4$-vertex.
        \begin{proof}
            Let $f_1, f_2, f_3$ be incident to $e_1, e_3, e_5$ respectively. Let the pairs of edges $g_1$ and $g_2$, $g_3$ and $g_4$, and $g_5$ and $g_6$ be incident to $v_2$, $v_4$, $v_6$ respectively. Let the non-cycle endpoint of $g_i$ be $w_i$. As usual, we distinguish between two subcases.
            
            \vspace{0.25cm}
            \noindent\textbf{Subcase 4.1:} The three internal edges are all incident to distinct $4$-vertices

            \begin{proof}[Subproof]
                Without loss of generality, suppose that $g_1, g_3, g_5$ are the interior edges. We split between the case where there is one internal $f_i$ and no internal $f_i$. If there were two or more internal $f_i$, we could symmetrically choose to perform our proof on $\Ext(C)$.

								\vspace{0.25cm}
							  \noindent\textbf{Config. 4.1.1:} Suppose that there are no interior edges of $C$.

                Note that $w_1, w_3, w_5$ cannot all be adjacent to each other, for then we contradict Lemma \ref{no-3-cycles}. So, suppose without loss of generality that $w_1, w_5$ are not adjacent. We may also guarantee that $w_2, w_6$ are not adjacent, for otherwise $f_1$ is contained in the separating $5$-cycle $w_2w_6v_6v_1v_2w_2$. So, attach the edge $w_2w_6$ to the graph $\Ext(C) -C$, and color the resulting graph in $\psi$. Place $c(w_2w_6)$ on $g_2, g_6$. We then have $|A_\psi(f_1)|, |A_\psi(f_2)|, |A_\psi(f_3)| \ge 3$, $|A_\psi(g_4)| \ge 5$, and $|A_\psi(e_i)| \ge 7$ for all $1 \le i \le 6$. Therefore, since each $e_i$ sees $c(w_2w_6)$, we have $|A_\psi(e_i) \cup A_\psi(e_j)| \le 12$, forcing $|A_\psi(e_i) \cap A_\psi(e_j)| \ge 2$. Then, we color $e_1, e_4$ and $e_2, e_5$ in identical colors. We are left with $|A_\psi(f_i)| \ge 1$, $|A_\psi(g_4)| \ge 3$, and $|A_\psi(e_3)|, |A_\psi(e_6)| \ge 5$. Note that we permit $g_4$ to be colored identically to $g_2, g_6$. We then color $f_1, f_2, f_3$. Either these edges do not see each other, or they have additional colors available. We likewise permit $g_4$ to be colored identically to $f_1$, and $e_6$ to be colored identically to $f_2$. Thus, we have $|A_\psi(g_4)| \ge 1$, $|A_\psi(e_3)| \ge 2$, and $|A_\psi(e_6)| \ge 3$. We use Hall's Theorem to color the remaining set of edges distinctly. Note that we will permute the exterior first, so we never encounter difficulty with the possibly identical colors on $f_1, g_4$ or $f_2, e_6$.

                We now color attach the edge $w_1w_5$ to $\Int(C) - C$, and color the resulting graph in $\phi$. Note by assumption that $w_1w_5$ does not exist. Then, apply the color $c(w_1w_5)$ to $g_1, g_5$. We require that $g_3$ not be colored in $c(w_1w_5)$, so we have $|A_\phi(g_3)| \ge 4$ and $|A_\phi(e_i)| \ge 10$, for all $1 \le i \le 6$. Therefore, $|A_\phi(e_1) \cap A_\phi(e_4)| \ge 2$, and $|A_\phi(e_2) \cap A_\phi(e_5)| \ge 2$, so we give the pairs $e_1, e_4$ and $e_2, e_5$ the same colors. We then have $|A_\phi(g_4)| \ge 2$ and $|A_\phi(e_3)|, |A_\phi(e_6)| \ge 8$. We color the rest of the graph distinctly using Hall's Theorem. Notice by our requirements that all edges are colored distinctly except the edge pairs $e_1, e_4$ and $e_2, e_5$. Note also that $C$ has the same arrangement of colors in $\Int(C)$ as in $\Ext(C)$, so permute the edges of $C$ to have the same colors in $\Int(C)$ as $\Ext(C)$, and join $\Int(C)$ and $\Ext(C)$ back together.
                
                We first permute the exterior of the graph, and we split into two cases. Suppose that $c(g_2) = c(g_4) = c(g_6)$ as well. Then, if $c(g_2)$ is bad on $g_2, g_4, g_6$, we permute it. It must avoid $4$ colors on $C$, plus at most $8$ colors on $c(w_1), c(w_3), c(w_5)$, for a total of $12$. We permute with the $13$th available color. We then permute the interior. Beginning with $c(g_1) = c(g_5)$, these must avoid the $4$ colors of $C$, the $5$ colors of $c(w_2), c(w_6)$, and the $3$ colors $c(f_1), c(f_2), c(f_3)$, for a total of $12$. We permute with the $13$th available. We then permute $c(g_3)$, which must avoid the $4$ colors of $C$, the $3$ colors of $c(w_4)$, the $2$ colors of $c(f_2), c(f_3)$, and the $1$ color $c(g_1) = c(g_5)$, for a total of $10$. Note that $c(g_3)$ may permute with the other, non-$c(g_1)$ colors of $c(w_1), c(w_5)$, for $c(g_2) = c(g_4) = c(g_6)$ is good with $c(g_1)$. Therefore, it must avoid at most $10$ colors, so we permute with the $11$th available. 
                
                Otherwise, $c(g_2) \neq c(g_4)$. In this case, $c(g_2) = c(g_6)$ must avoid the $4$ colors of $C$ and the $5$ colors of $c(w_1), c(w_5)$, for a total of $9$. We also require that it avoid $c(g_3)$. This is $10$ colors, so we permute with the $11$th available. We then permute $c(g_4)$, if it is bad. It must avoid the $4$ colors of $C$, the $3$ colors of $c(w_3)$, and the $1$ color $c(g_2)$, for a total of $8$. We also require that it avoid the color $c(g_1) = c(g_5)$. We permute with the $10$th available. We then permute the interior. We begin with $c(g_1) = c(g_5)$, which must avoid the $4$ colors of $C$, the $5$ colors of $c(w_2), c(w_6)$, and the $3$ colors $c(f_1), c(f_2), c(f_3)$, for a total of $12$. We permute with the $13$th available. We finally permute $c(g_3)$. We first mention that $c(g_3)$ is good with $c(g_2), c(g_4), c(g_6)$, for either we have permuted $c(g_2), c(g_4), c(g_6)$ to be good with $c(g_3)$, or $c(g_3)$ was just swapped in the previous step, and is thus also a good color for those sets of edges. Thus, $c(g_3)$ may be permuted with the non-$c(g_1)$ colors of $c(w_1), c(w_3), c(w_5)$. Thus, $c(g_3)$ must avoid the $4$ colors of $C$, the $3$ colors of $c(w_4)$, the $2$ colors of $c(f_2), c(f_3)$, and the $1$ color $c(g_1) = c(g_5)$, for a total of $10$. We permute with the $11$th available.

								\vspace{0.25cm}
								\noindent\textbf{Config. 4.1.2:} There is one interior $f_i$.

                Suppose without loss of generality, that $f_1$ is an interior edge, and that $f_2, f_3$ are exterior. Construct a new graph $H$ from $\Ext(C) - C$ by adding the edge $w_2w_4$, and color this graph in $\psi$. Note that $w_2w_4$ does not already exist, for then $w_2w_4v_4v_3v_2w_2$ is a separating $5$-cycle, violating Lemma \ref{lem:no-sep-5-cycle}. For this reason, as well as Lemma \ref{lem:no-sep-4-cycle} and Lemma \ref{no-3-cycles}, $g_2$ does not see $g_4$. Therefore, place $c(w_2w_4)$ on $g_2, g_4$. Then, $|A_\psi(f_2)|, |A_\psi(f_3)| \ge 3$, $|A_\psi(g_6)| \ge 4$, $|A_\psi(e_2)|, |A_\psi(e_3)|, |A_\psi(e_4)|, |A_\psi(e_5)| \ge 7$, and $|A_\psi(e_1)|, |A_\psi(e_6)| \ge 10$. So, $|A_\psi(e_2) \cap A_\psi(e_5)| \ge 1$, since $|A_\psi(e_2)|, |A_\psi(e_5)| \ge 7$. Moreover, $|A_\psi(e_3)| \ge 7$, $|A_\psi(e_6)| \ge 10$, and $|A_\psi(e_3)\cup A_\psi(e_6)| \le 12$, since they both see $c(w_2w_4)$. Therefore, $|A_\psi(e_3)\cap A_\psi(e_6)| \ge 5$. Thus, by Hall's Theorem, we may give a distinct color to every edge, except for the specified pairs of edges $e_3, e_6$ and $e_2, e_5$, which we color identically. We once again construct a new graph from $\Int(C) - C$ by adding the edge $w_1w_5$. Following the same argument as above, it is possible to add this edge, and, moreover, we may guarantee that $g_1, g_5$ do not see each other. Color the resulting graph in $\phi$, and then place $c(w_1w_5)$ on $g_1, g_5$. We now have $|A_\phi(f_1)| \ge 3$, $|A_\phi(g_3)| \ge 4$, $|A_\phi(e_1)|, |A_\phi(e_6)| \ge 7$, and $|A_\phi(e_2)|, |A_\phi(e_3)|, |A_\phi(e_4)|, |A_\phi(e_5)| \ge 10$. So, $|A_\phi(e_6) \cap A_\phi(e_3)| \ge 1$, and likewise $|A_\phi(e_2) \cap A_\phi(e_5)| \ge 2$. So, we may use Hall's Theorem to color all edges distinctly, except for the specified pairs $e_3, e_6$ and $e_2, e_5$.  Note that $C$ has the same arrangement of colors in $\Int(C)$ as in $\Ext(C)$.

                Since $C$ has the same arrangement of colors in $\Int(C)$ and $\Ext(C)$, we may permute the colors of $C$ in $\Int(C)$, and then rejoin the two graphs to form all of $G$. We first permute $c(g_2) = c(g_4), c(g_6)$ in $\Ext(C)$ to be good with $\Int(C)$. We then permute $c(g_1) = c(g_5), c(g_3)$ to be good with $\Ext(C)$. In the process, we make every $f_i$ good. So, we first permute $c(g_2) = c(g_4)$. This must avoid the $4$ colors of $C$, the $6$ colors of $c(w_1) \cup c(w_3)$, and the $1$ colors of $c(f_1)$, for a total of $11$. We permute with the $12$th available color. We now permute $c(g_6)$ on $g_6$. This must avoid the $4$ colors of $C$, the $3$ colors on $c(w_5)$,  the $1$ color on $c(f_1)$, the $1$ color $c(g_2)$, and the $1$ color $c(g_1) = c(g_3)$, for a total of  $10$. We permute with the $11$th available color. Note now that $c(f_1)$ is automatically good. We now permute $c(g_1) = c(g_5)$. This must avoid the $4$ colors of $C$, the $6$ colors of $c(w_2), c(w_6)$, and the two colors $c(f_2), c(f_3)$, for a total of $12$. We permute with the $13$th available. We finally permute $c(g_3)$ on $g_3$. Recall that this is good with $c(g_4) = c(g_2)$, even if we had chosen to permute $c(g_1)$ with $c(g_3)$. So, we may permute with the colors of the edges who have the endpoint $w_1$, except for $c(g_1)$. We also may permute $c(g_3)$ with its own set of colors $c(w_3)$. Thus, we must avoid the $4$ colors of $C$, the $3$ colors of $c(w_4)$, the $1$ color $c(g_2) = c(g_4)$, the $2$ additional colors of $c(w_5)$, and the $2$ colors $c(f_2), c(f_3)$. This is a total of $12$, so we permute with the $13$th available color. We now have a good coloring for all of $G$, for there are no color conflicts between the $g_i$, and there are no color conflicts between the $g_i$ and $f_i$.
            \end{proof}

        \vspace{0.25cm}
        \noindent\textbf{Subcase 4.2:} Two internal edges are adjacent to the same $4$-vertex.
        
Without loss of generality, suppose that $g_1, g_2, g_3$ are internal edges. Thus, $g_1, g_2$ are adjacent to the same $4$-vertex, $v_2$. Note that $\Int(C)$ may contain at most one of the $f_i$, for $\Int(C)$ and $\Ext(C)$ are symmetric. We first color $\Ext(C)$. We are not concerned about repeat colors among the $f_i, g_i$, for we will permute $g_4$ once first, and then permute $\Int(C)$. Thus, color $\Ext(C)$ using minimality. Now, color $\Int(C) - C$ in $\phi$. We simply color the case in which $C$ If $C$ has no internal $f_i$, then we have $|A_\phi(g_1)|, |A_\phi(g_2)| \ge 3$, $|A_\phi(g_3)| \ge 5$, $|A_\phi(e_1)|, |A_\phi(e_2)| \ge 9$, $|A_\phi(e_3)|, |A_\phi(e_4)| \ge 11$, and $|A_\phi(e_5)|, |A_\phi(e_6)| \ge 13$. Therefore, $|A_\phi(e_1) \cap A_\phi(e_4)| \ge 7$, $|A_\phi(e_2) \cap A_\phi(e_5)| \ge 9$, and $|A_\phi(e_3) \cap A_\phi(e_6)| \ge 11$. If there is an internal $f_i$, then we therefore have $|A_\phi(f_i)| \ge 4$, and $|A_\phi(e_1) \cap A_\phi(e_4)| \ge 4$, $|A_\phi(e_2) \cap A_\phi(e_5)| \ge 6$, and $|A_\phi(e_3) \cap A_\phi(e_6)| \ge 8$. Therefore, we may always color in order $g_1$, $g_2$, $f_i$, then the pair $e_1, e_4$, the pair $e_2, e_5$, and the pair $e_3, e_6$. Otherwise, if $e_i, e_j$ can not be colored in the same color, then we may color them last. We have $|A_\phi(e_1)|, |A_\phi(e_2)| \ge 6$, $|A_\phi(e_3)|, |A_\phi(e_4)| \ge 8$, and $|A_\phi(e_5)|, |A_\phi(e_6)| \ge 10$, so we may color the remaining set of edges distinctly, if necessary.

Now, permute $C$ in $\Int(C)$ so that we may join $\Int(C)$ and $\Ext(C)$ back together. Recall we have guaranteed by our coloring that $C$ has the same arrangement of colors in $\Int(C)$ and $\Ext(C)$. Now, we first permute $c(g_4)$ on $g_4$. We cannot permute with the $6$ colors of $C$, nor the $3$ colors of $c(w_3)$, and we also avoid the $3$ colors $c(g_1), c(g_2)$, and $c(f_i)$ for $f_i$ internal to $C$. This is a total of $12$ colors, so we permute with the $13$th available. We then permute $c(g_3)$ on $g_3$. We must avoid the $6$ colors of $C$, the $3$ colors of $c(w_4)$, and the at most $2$ colors of the neighboring $c(f_j)$, for a total of $11$. We permute with the $12$th color. We then permute $c(g_1), c(g_2)$. These must avoid the $6$ colors of $C$, the colors $c(f_1), c(f_2)$, and the $1$ color $c(g_3)$. Note we may permute with the remaining colors of $c(w_3)$, for $c(g_1), c(g_2)$ are either colored to be good with $c(g_4)$, or were swapped in the last step with $c(g_3)$, which is also good with $c(g_4)$. Therefore, these must avoid at most $9$ colors, so we permute them with the $10$th and $11$. We lastly permute the one interior $c(f_i)$. This must avoid the $6$ colors of $C$, the $3$ colors $c(g_1), c(g_2), c(g_3)$, and the at most $3$ neighboring edges of $\Ext(C)$, for a total of $12$. Note that we may permute with the non-$c(g_3)$ colors of $c(w_3)$, for either we have colored $c(g_4)$ to be good with $c(f_i)$, or $c(f_i)$ was swapped with the previous color on $g_1, g_2, g_3$, all of which are also good with $c(g_4)$. Thus, $c(f_i)$ must avoid at most $12$ colors to permute with, so we permute with the $13$th available. We have thereby achieved a good coloring for all of $G$.
        \end{proof}
        These constitute all possible cases, so there do not exist any separating $6$-cycles in $G$.
    \end{proof}

\end{document}